\tikzstyle{vertex}=[circle, draw, fill=black, inner sep=0pt, minimum size=4pt]
\tikzstyle{ivertex}=[circle, draw, fill=black, inner sep=0pt, minimum size=0pt]
\tikzstyle{blankvertex}=[circle, draw=white, fill=white, inner sep=0pt, minimum size=4pt]
\tikzstyle{edge}=[line width=1.5pt]
\tikzstyle{iedge}=[line width=0pt]
\tikzstyle{labelsty}=[font=\scriptsize]
\tikzstyle{dedge}=[edge,-latex]
\tikzstyle{pedge}=[dashed width=1.5pt]
\newtheorem{theorem}{Theorem}[section]
\newtheorem{lemma}[theorem]{Lemma}
\newtheorem{definition}[theorem]{Definition}
\newtheorem{proposition}[theorem]{Proposition}
\theoremstyle{definition}
\theoremstyle{remark}
\numberwithin{equation}{section}
\numberwithin{figure}{section}
\numberwithin{table}{section}
\newcommand{\R}{\mathbb{R}}
\newcommand{\Z}{\mathbb{Z}}
\newcommand{\N}{\mathbb{N}}
\newcommand{\G}{\tilde{G}}
\newcommand{\p}{\tilde{p}}
\newcommand{\V}{\tilde{V}}
\newcommand{\EE}{\tilde{E}}
\title{Orientation-reversing crystallographic rigidity}
\author[J. Esson]{J. Esson}
\address{School of Mathematical Sciences, Lancaster University, Lancaster LA1 4YF, United Kingdom}
\email{j.esson@lancaster.ac.uk}
\author[E. Kastis]{E. Kastis}
\address{School of Mathematical Sciences, Lancaster University, Lancaster LA1 4YF, United Kingdom}
\email{l.kastis@lancaster.ac.uk}
\author[B. Schulze]{B. Schulze}
\address{School of Mathematical Sciences, Lancaster University, Lancaster LA1 4YF, United Kingdom}
\email{b.schulze@lancaster.ac.uk}
\begin{document}

	\raggedbottom

\begin{abstract}
    This paper provides a combinatorial characterisation for generic forced symmetric rigidity of  bar-joint frameworks in the Euclidean plane that are symmetric with respect to the orientation-reversing wallpaper group $\Z^2\rtimes\mathcal{C}_s$, also known as $pm$ in crystallography, under a fixed lattice representation. Corresponding results for the wallpaper groups $cm$ and $pg$ follow directly from this. The method used also provides an inductive construction for the corresponding gain graphs, in terms of Henneberg-type graph operations.
\end{abstract}
	\maketitle

\noindent \textbf{Keywords}: infinitesimal rigidity; crystallographic framework; wallpaper group;  gain graph; sparsity counts.

\section{Introduction}

A $d$-dimensional \emph{(bar-joint) framework} is a pair $(G,p)$ consisting of a simple graph $G=(V,E)$ and a map $p:V\to \mathbb{R}^d$ assigning positions to the vertices of $G$, with $p(v_i)\neq p(v_j)$ for $v_iv_j\in E$. The map $p$ is also called a \emph{configuration} of $G$ in $\mathbb{R}^d$. Frameworks provide suitable mathematical models for a variety of real-world structures whose rigidity and flexibility properties are essential for their behaviour and functioning. Loosely speaking, a framework is rigid if the vertices cannot be moved continuously into another non-congruent framework while maintaining the edge lengths, and flexible otherwise. Since determining whether a framework is rigid is computationally challenging \cite{abbott}, it is common to linearise the problem by differentiating the length constraints, leading to the classical notion of infinitesimal rigidity \cite{HandbookDCG,WhiteleyMatroids}.
An \emph{infinitesimal motion} of a framework $(G,p)$ is a map $u:V\to\R^d$ such that, for all $v_iv_j\in E$,
		\begin{align*}
			(p(v_i)-p(v_j))\cdot(u(v_i)-u(v_j))=0.
		\end{align*}	
		 The infinitesimal motion $u$ is \emph{trivial} if it corresponds to a rigid body motion, and \emph{non-trivial} otherwise. The matrix corresponding to the linear system above is called the \emph{rigidity matrix}, denoted $R(G,p)$, and $(G,p)$ is \emph{infinitesimally rigid} if $G$ is complete on at most $d + 1$ vertices or $R(G,p)$ has maximum rank (equal to $d|V|-\binom{d+1}{2}$). An infinitesimally rigid framework which is also \emph{independent} (i.e. the rigidity matrix has linearly independent rows) is also called \emph{isostatic}. 
For generic configurations, infinitesimal rigidity is equivalent to rigidity \cite{asiroth}, and both depend only on the underlying graph $G$. Generic rigidity is well understood in the plane, and there are significant partial results in higher dimensions (see e.g. \cite{HandbookDCG,connelly_guest_2022,WhiteleyMatroids}).

In applications, one often encounters non-generic configurations, because both natural and man-made
structures typically exhibit symmetry. This has led to a surge of interest in the rigidity and flexibility of finite symmetric and infinite periodic frameworks.  Crucially, if we consider configurations that are as generic as possible with the given symmetry constraints, then (infinitesimal) rigidity may still be characterised combinatorially in terms of the corresponding group-labelled quotient graph. 
For a summary of combinatorial results regarding the rigidity of finite symmetric frameworks, we refer the reader to \cite{HandbookDCGsym,constraint19}.
Let us briefly summarise what is known regarding the  rigidity of generic infinite  periodic frameworks. Here the  allowed continuous motions are those that preserve the bar lengths and the periodicity. The forced periodicity is a key feature of this model: there are frameworks that only have trivial periodicity-preserving motions, but can be deformed non-trivially if larger classes of motions are allowed. Such frameworks are still rigid in the forced periodic model. Note that the lattice representation may either be fixed or allowed to change as the framework moves. See \cite{Torus} and \cite{bs10} for the respective mathematical models.

Generic rigid periodic frameworks with a fixed lattice representation in the plane have been characterised by E. Ross \cite{Inductive}. The analogous result for the fully flexible lattice representation in the plane was obtained by J.~Malestein and L.~Theran \cite{MalesteinFlexible}. Similar results have also been established for partially flexible lattice representations \cite{InductiveFlexible,mt14a}. For frameworks with crystallographic symmetry, specifically periodic frameworks with added symmetry in the fundamental cells, necessary conditions for forced symmetric rigidity for various types of lattice flexibility were given in \cite{Added}. In \cite{Orientation}, J.~Malestein and L.~Theran established complete combinatorial characterisations of forced symmetric generic rigid frameworks with crystallographic symmetry in the plane, where the group is generated by translations and rotations, and the lattice representation is fully flexible. The analogous result for the fixed lattice representation was recently obtained by D. Bernstein in \cite{BernsteinRotations} using a new approach based on tropical geometry.

A key gap in this theory is to deal with orientation-reversing  wallpaper groups. We will address this gap here for the fixed lattice representation in the plane by giving a characterisation of generic forced symmetric rigidity for the group $\Z^2\rtimes\mathcal{C}_s$, also denoted by $pm$ in the Hermann-Maguin notation used in crystallography \cite{Schatt, HMbook} or $\ast\ast$ in the orbifold notation advocated by J.H.~Conway \cite{Sym}. Characterisations for forced symmetric rigidity with respect to the subgroups $cm$ and $pg$ of $pm$ (called $\ast \times$ and  $\times \times$  in the orbifold notation, respectively)   follow directly from this, as will be discussed at the end of Section \ref{ReflectiveReductions}. 

The paper is organised as follows. In Section~\ref{sec:symfwk} we review the necessary notions from the rigidity theory of symmetric frameworks.  Section~\ref{SectionReflection}  introduces the relevant sparsity conditions for characterising  $\Z^2\rtimes\mathcal{C}_s$-symmetric rigidity, states the main result, and proves the necessity of the conditions. In Section~\ref{ReflectiveSufficiency} we introduce the Henneberg-type graph operations we will use for the induction proof of the main result and show that these operations preserve rigidity. These mostly follow standard methods, but a different approach is required for the case of a $1$-extension that adds a triple of parallel edges to the gain graph.
Section~\ref{ReflectiveReductions} is then dedicated to the main combinatorial part of the proof. We conclude the paper in Section~\ref{SectionFurther} with a discussion on future research directions.

\section{Symmetric frameworks}\label{sec:symfwk}

In this paper, $\Gamma$ will always be a subgroup of the Euclidean group $\mathrm{Isom}(\R^d)$. A simple graph $\G$ is \emph{$\Gamma$-symmetric} if there is a group action $\Gamma\to\mathrm{Aut}(\G)$. We will only consider actions that are \emph{free}, meaning that no non-identity elements of the group $\Gamma$ fix any vertices. 	
	A useful tool for studying symmetric graphs is the (group-labelled) gain graph (see e.g. \cite{EGRES,Torus,Quotient}), which is defined as follows.

		Let $G=(V,E)$ be a multigraph and let $\Gamma$ be a group. Choose an orientation for each edge and label the oriented edge set $\vec{E}$. A \emph{$\Gamma$-gain assignment} is a function $m:\vec{E}\to\Gamma$, which assigns a \emph{gain} to each edge such that parallel edges with the same orientation have different gains and every loop has a non-identity gain.		
		A \emph{$\Gamma$-gain graph} is a pair $(G,m)$ consisting of a multigraph $G=(V,E)$ and a $\Gamma$-gain assignment $m:\vec{E}\to\Gamma$. An edge $e$ from vertex $v_i$ to vertex $v_j$ with gain $m(e)$ is denoted by $e=(v_i,v_j;m(e))$.	
        
		For a $\Gamma$-gain graph $(G,m)$, the \emph{derived graph} $\G$ is the $\Gamma$-symmetric graph that is represented by $(G,m)$. If $G=(V,E)$, then $\G=(\V,\tilde E)$, where  $\V=\{(v,\gamma):v\in V,\gamma\in\Gamma\}$ and $((v_i,\gamma_i),(v_j,\gamma_j))$ is an edge of $\tilde E$ if and only if there exists $(v_i,v_j;\gamma_i^{-1}\gamma_j)\in \vec{E}$. Conversely, given a $\Gamma$-symmetric graph $\tilde G$, we may construct the quotient $\Gamma$-gain graph $G$ that has $\tilde G$ as its derived graph by choosing a representative for each vertex orbit and adding an edge for each edge orbit with the corresponding gain. An edge $((v_i,\gamma_i),(v_j,\gamma_j))\in\EE$ is represented by $(v_i,v_j;{\gamma_i}^{-1}\gamma_j)\in \vec{E}$ in the gain graph.  The choice of orientation for each edge in the gain graph is unimportant, as it is equivalent to choose the opposite orientation and then assign the inverse gain to that edge.

		For some $\Gamma\leq\mathrm{Isom}(\R^d)$, let $(G,m)$ be a $\Gamma$-gain graph with derived graph $\G$. A configuration $\p:\V\to\R^d$ of $\G$ is \emph{$\Gamma$-symmetric} if, for all $v\in V$ and $\gamma\in\Gamma$, we have $$\p(v,\gamma)=\gamma(\p(v,\mathrm{id})).$$ A framework $(\G,\p)$ is \emph{$\Gamma$-symmetric} if $\G$ is a $\Gamma$-symmetric graph and $\p:\V\to\R^d$ is a corresponding $\Gamma$-symmetric configuration. Note that a \emph{$\Gamma$-gain framework} $(G,m,p)$ consisting of a $\Gamma$-gain graph $(G,m)$ and $p:V\to\R^d$ uniquely determines the  \emph{derived $\Gamma$-symmetric framework} $(\tilde G,\tilde p)$.

 For  $\gamma\in\mathrm{Isom}(\R^2)$, let $\gamma_l$ denote the linear part of the isometry $\gamma$.   An infinitesimal motion $\tilde{u}:\tilde{V}\to\R^d$ of  a $\Gamma$-symmetric framework $(\G,\p)$ is \emph{$\Gamma$-symmetric} if, for all $(v,\gamma)\in\tilde{V}$, we have $$\tilde{u}((v,\gamma))=\gamma_l(\tilde{u}((v,\mathrm{id}))).$$ A $\Gamma$-symmetric framework is \emph{$\Gamma$-symmetrically infinitesimally rigid} if all of its $\Gamma$-symmetric infinitesimal motions are trivial. Otherwise, it is \emph{$\Gamma$-symmetrically infinitesimally flexible}.  Note that if $\Gamma$ contains translations, then the infinite framework $(\G,\p)$ has periodic symmetry, and the lattice representation is fixed under any motion, since $\Gamma$ is fixed.
 
 A now classical object to check $\Gamma$-symmetric infinitesimal rigidity is the orbit rigidity matrix \cite{Orbit,TanigawaBodyBar}.
 
 Let $(G,m,p)$ be a $\Gamma$-gain framework with derived $\Gamma$-symmetric framework $(\G,\p)$. Then the \emph{orbit rigidity matrix} $\mathbf{O}(\G,\p,\Gamma)$ of $(\G,\p)$ is the $|E|\times d|V|$ matrix with the following row for each edge $e=(v_i,v_j;m(e))\in E$:
		\begin{enumerate}
			\item If $e$ is not a loop ($v_i\neq v_j$), then the row for $e$ is:
			    \setcounter{MaxMatrixCols}{20}
    \begin{align*}
        \begin{bNiceMatrix}[first-row,first-col]
         &&&&v_i&&&&v_j&&&&\\
		  e&0&...&0&(p(v_i)-m(e)p(v_j))^T&0&...&0&(p(v_j)-(m(e))^{-1}p(v_i))^T&0&...&0
        \end{bNiceMatrix}
        .
    \end{align*}
			\item If  $e$ is a loop ($v_i=v_j$), then the row for $e$ is:
			            \begin{align*}
        \begin{bNiceMatrix}[first-row,first-col]
         &&&&v_i&&&&\\
				e&0&...&0&(2p(v_i)-m(e)p(v_i)-(m(e))^{-1}p(v_i))^T&0&...&0
        \end{bNiceMatrix}     
        .
    \end{align*}
		\end{enumerate}

Note  that $u\in\R^{d|V|}$ extends to a $\Gamma$-symmetric infinitesimal motion $\tilde{u}\in\R^{d|\tilde{V}|}$ of $(\G,\p)$ if and only if $\mathbf{O}(\G,\p,\Gamma)u=0$. In this case, $u$ acts as a restriction of $\tilde{u}$ to the set of vertex orbit representatives used for the gain graph. As such, $\ker(\mathbf{O}(\G,\p,\Gamma))$ is isomorphic to the space of $\Gamma$-symmetric infinitesimal motions. Since the dimension of the space of trivial $\Gamma$-symmetric infinitesimal motions can easily be determined (see e.g. \cite{sch10a,Added}) $\Gamma$-symmetric infinitesimal rigidity may be checked by computing the rank of $\mathbf{O}(\G,\p,\Gamma)$.

 A configuration $p:V\to\R^d$ of a $\Gamma$-gain graph $(G,m)$ is called \emph{generic} if the corresponding orbit rigidity matrix $\mathbf{O}(\G,\p,\Gamma)$ achieves the maximum possible rank among configurations of $(G,m)$. The set of generic configurations is open and dense in the set of all such configurations.     
    Like for non-symmetric rigidity, assuming genericity allows rigidity to be treated as a property of the gain graph. A $\Gamma$-gain graph $(G,m)$ is said to be \emph{rigid}  in $\R^d$ if, for every (equivalently, for some) generic configuration $p:V\to\R^d$, the derived framework $(\G,\p)$ is $\Gamma$-symmetrically infinitesimally rigid. Otherwise, it is called \emph{flexible}. A $\Gamma$-gain graph $(G,m)$ is \emph{independent} in $\R^d$ if $\mathbf{O}(\G,\p,\Gamma)$ is row-independent, and \emph{dependent} otherwise.  A gain graph is \emph{minimally rigid} in $\R^d$ if it is rigid and independent in $\R^d$.

		A walk in a gain graph $(G,m)$ is denoted by ${e_1}^{\alpha_1}{e_2}^{\alpha_2}...{e_k}^{\alpha_k}$, where $e_1,e_2,...,e_k\in E$ and the values $\alpha_1,\alpha_2,...,\alpha_k$ are $+1$ for edges that are traversed forwards, and $-1$ for edges that are traversed backwards. The \emph{net gain} for this walk is the element
		$\prod_{i=1}^{k}m(e_i)^{\alpha_i}.$
				For a given vertex $v\in V$, the \emph{gain space} $\langle(G,m)\rangle_v$ of $(G,m)$ is the subgroup of $\Gamma$ that is generated by the net gains on all closed walks in $(G,m)$ that start and end at $v$ \cite{EGRES,Torus,HandbookDCGsym}.

		A gain graph $(G,m)$ is \emph{balanced} if, for every $v\in V$, the gain space $\langle(G,m)\rangle_v$ is trivial. Otherwise, it is \emph{unbalanced}.
        
		Let $(G,m)$ be a $\Gamma$-gain graph. For some $v\in V$ and $\gamma\in\Gamma$, a \emph{switching operation} at $v$ by $\gamma$ defines a new gain assignment $m':\vec{E}\to\Gamma$ as follows:
		\begin{align*}
			m'(e)=
			\begin{cases}
				\gamma m(e)\gamma^{-1}&\text{ if $e$ is a loop incident to $v$;}\\
				\gamma m(e)&\text{ if $e$ is a non-loop edge directed from $v$;}\\
				m(e)\gamma^{-1}&\text{ if $e$ is a non-loop edge directed to $v$;}\\
				m(e)&\text{ otherwise.}
			\end{cases}
		\end{align*}
		Gain assignments are \emph{equivalent} if one can be reached from the other by a sequence of switching operations.  In effect, a switching operation at $v$ changes the choice of vertex orbit representative for $v$ that is used to obtain the gain graph from the derived graph. Since the derived graph does not change, switching operations preserve rigidity and independence of gain graphs. For point groups, this was shown in part of the proof of \cite[Lemma 5.2]{EGRES}.
		When, for a given starting vertex, the gain space  is a normal subgroup of the overall gain group, switching operations will preserve the gain space. In particular, performing switching operations on a balanced gain graph will always give a balanced gain graph.

    Using switching operations, the edges of any spanning tree in a $\Gamma$-gain graph can be assigned identity labels. Based on this simple observation, we have the following result, which is an extension of \cite[Lemma 2.3]{EGRES}.
    \begin{lemma}\label{lemma:swicth}
        Let $(G,m)$ be a $\Gamma$-gain graph such that for every vertex $v$ of $G$, the gain space $\langle(G,m)\rangle_v$ is contained in $\Gamma'\leq \Gamma$. Then there is an equivalent gain graph $(G,m')$ in which every gain is an element of $\Gamma'$.
    \end{lemma}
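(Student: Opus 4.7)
The plan is to use the observation stated just before the lemma, namely that switching operations can be chosen so as to make every edge of a given spanning tree carry the identity gain, and then to identify the remaining (non-tree) gains with net gains of closed walks in the original $(G,m)$.

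First I would reduce to the case that $G$ is connected by treating each component separately. Fix a spanning tree $T$ of $G$ rooted at some vertex $r$, and for each $v\in V$ let $\alpha_v$ denote the net gain in $(G,m)$ of the unique path $P_v$ from $r$ to $v$ in $T$, so that $\alpha_r=\mathrm{id}$. Processing vertices in BFS order from $r$, I would perform a switching operation at each non-root vertex $v$ by $\alpha_v$. An induction on distance from $r$ shows that after these switches every tree edge has identity gain; call the resulting equivalent assignment $m'$.

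Next I would compute $m'(e)$ for an arbitrary edge. Because switches at distinct vertices act independently on any fixed edge (one contributes on the left, the other on the right via its inverse), for a non-loop edge $e=(u,v;g)$ the composed effect of the switches at $u$ and $v$ yields $m'(e)=\alpha_u\,g\,\alpha_v^{-1}$, while for a loop $e$ at $v$ with gain $g$ one obtains $m'(e)=\alpha_v\,g\,\alpha_v^{-1}$. In either case this expression is precisely the net gain in $(G,m)$ of the closed walk $P_u\cdot e\cdot P_v^{-1}$ (respectively $P_v\cdot e\cdot P_v^{-1}$) based at $r$. By the hypothesis, this net gain lies in $\langle(G,m)\rangle_r\subseteq\Gamma'$, so $m'(e)\in\Gamma'$ as required.

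The only genuine technical point is the bookkeeping of the switching operations: one must verify from the defining formulas that switches at distinct vertices commute in their effect on every individual edge (so the total effect at an edge $e=(u,v;g)$ is described by the pair $(\alpha_u,\alpha_v)$), and separately handle loops (whose new gain is given by conjugation rather than two-sided multiplication). This amounts to a short case-check, after which the identification of $m'(e)$ with a closed-walk net gain in $(G,m)$ and the invocation of the hypothesis are immediate.
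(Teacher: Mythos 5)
Your proof is correct and follows exactly the route the paper sketches (the lemma is stated without a formal proof, preceded only by the remark that switching can put identity gains on a spanning tree). Your bookkeeping---that the composite effect of all switches on $e=(u,v;g)$ is $\alpha_u g\alpha_v^{-1}$, which equals the net gain of the closed walk $P_u\cdot e\cdot P_v^{-1}$ based at $r$, and the corresponding conjugation formula for loops---is the content the paper leaves implicit, and it checks out against the switching definitions.
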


	\section{The conditions for reflectional periodic rigidity}\label{SectionReflection}
	From now on we will work in the Euclidean plane. In the following we will let $\Gamma$ be the group $\Z^2\rtimes\mathcal{C}_s$, which is the wallpaper group formed by taking the semi-direct product of the group $\Z^2$ of translations (w.l.o.g. generated by the vectors $(1,0)$ and $(0,1)$) with the reflectional group $\mathcal{C}_s$, generated by the reflection $s$, in the plane (where w.l.o.g. the mirror line of $s$ is the $x$-axis) \cite[Section 3.2]{Gocg}. In the Hermann–Mauguin notation used in crystallography, this group is  denoted by $pm$.

Our main goal  is to characterise  the $\Z^2\rtimes\mathcal{C}_s$-gain graphs that are rigid in $\R^2$. To state the necessary and sufficient conditions, we need the following definitions.

Let $k\in\N$ and $l\in\N_0$. A multigraph $G=(V,E)$ is \emph{$(k,l)$-sparse} if all subgraphs $G'=(V',E')\subseteq G$ with $|E'|\geq1$ satisfy $|E'|\leq k|V'|-l$. The multigraph $G$ is \emph{$(k,l)$-tight} if it is $(k,l)$-sparse and it satisfies $|E|=k|V|-l$. 

A $\Gamma$-gain graph that, for every choice of starting vertex, has a gain space consisting only of translations is said to be \emph{purely periodic}.

\begin{definition}\label{DefReflective}
		A $\Z^2\rtimes\mathcal{C}_s$-gain graph $(G,m)$ is  said to be \emph{$\Z^2\rtimes\mathcal{C}_s$-tight} if it satisfies the following conditions:
		\begin{enumerate}
			\item \emph{$(2,1)$-tight Condition:} $G$ is $(2,1)$-tight;
			\item \emph{Purely Periodic Condition:} Every purely periodic subgraph of $G$ is $(2,2)$-sparse;
			\item \emph{Balanced Condition:} Every balanced subgraph of $G$ is $(2,3)$-sparse.
		\end{enumerate}
	\end{definition}

We will prove the following main theorem.
    
	\begin{theorem}\label{Reflective}
		A $\Z^2\rtimes \mathcal{C}_s$-gain graph $(G,m)$ is minimally rigid if and only if it is $\Z^2\rtimes\mathcal{C}_s$-tight.
	\end{theorem}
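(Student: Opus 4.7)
The plan is to prove necessity and sufficiency separately. For necessity, the key computation is the dimension of the space of $\Gamma$-symmetric trivial infinitesimal motions for $\Gamma=\Z^2\rtimes\mathcal{C}_s$. Translation orthogonal to the mirror is reversed by $s$, and a non-trivial rotation fails to commute with $s$, so only translation along the mirror is $\Gamma$-symmetric, spanning a $1$-dimensional space. Hence $\mathbf{O}(\G,\p,\Gamma)$ has rank at most $2|V|-1$, and minimal rigidity forces $|E|=2|V|-1$ together with linearly independent rows, yielding condition~(1). For a subgraph $G'$, the submatrix of $\mathbf{O}$ indexed by its edges and vertices has rank equal to $|E'|$. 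Using Lemma~\ref{lemma:swicth}, we switch all gains of $G'$ into the relevant subgroup of $\Gamma$: if $G'$ is purely periodic, into $\Z^2$, giving an orbit matrix with a $2$-dimensional trivial motion kernel, so $|E'|\leq 2|V'|-2$; if $G'$ is balanced, into the trivial group, giving an ordinary rigidity matrix with a $3$-dimensional trivial motion kernel, so $|E'|\leq 2|V'|-3$.

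For sufficiency, I would induct on $|V|$, invoking the Henneberg-type operations constructed and shown to be rigidity-preserving in Section~\ref{ReflectiveSufficiency} (with the exceptional triple-parallel-edge $1$-extension requiring its own rank argument). A finite list of small $\Z^2\rtimes\mathcal{C}_s$-tight seeds (for instance a single vertex with a loop whose gain has a non-trivial reflection component) forms the base, each verified directly by computing the rank of its orbit rigidity matrix at a generic configuration. The inductive step is the combinatorial reduction: every $\Z^2\rtimes\mathcal{C}_s$-tight gain graph strictly larger than a seed admits an inverse Henneberg operation producing another $\Z^2\rtimes\mathcal{C}_s$-tight gain graph. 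From $|E|=2|V|-1$ the average degree is $4-2/|V|$, so $G$ has a vertex of degree at most $3$. A degree-$2$ vertex is handled by an inverse $0$-extension, where elementary sparsity bookkeeping, together with a switching to normalise loop and parallel-edge sub-cases, shows the resulting graph remains $\Z^2\rtimes\mathcal{C}_s$-tight.

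The main obstacle is expected to be the inverse $1$-extension at a degree-$3$ vertex $v$ with neighbours $u_1,u_2,u_3$, which forms the heart of the combinatorial argument. There are three candidate pairings of the edges incident to $v$; for each, one removes $v$ and adds a single new edge between the corresponding pair of neighbours, with gain obtained by multiplying along the path through $v$. The plan is to show that at least one of the three pairings produces a $\Z^2\rtimes\mathcal{C}_s$-tight graph, possibly after a switching. The potential obstructions are three subgraphs $H_1,H_2,H_3$, each saturating one of conditions~(1)--(3) once the corresponding new edge is inserted. An inclusion--exclusion style counting argument on the $H_i$, combined with the global $(2,1)$-tightness of $G$, should force a contradiction in all but one case. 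The delicate ingredient is managing the non-trivial semidirect conjugation of $\Z^2\rtimes\mathcal{C}_s$, which couples vertex switching with conjugation of the reflective component on incident edges, and makes the interplay between purely periodic subgraphs and reflective gains near $v$ genuinely non-standard compared to the point-group setting.
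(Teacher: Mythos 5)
Your proposal follows essentially the same route as the paper: necessity via the orbit rigidity matrix with a $1$-dimensional trivial motion space plus switching into $\Z^2$ (resp. the trivial group) for purely periodic (resp. balanced) subgraphs, and sufficiency by Henneberg induction from the single-vertex-with-reflective-loop base case, handling the exceptional triple-parallel-edge $1$-extension with a bespoke rank argument and running a blocker/inclusion-exclusion analysis at degree-$3$ vertices. The only small imprecision is that the base case is not a ``finite list'' of seeds but the single graph $K_1^1$ equipped with any loop gain having non-trivial $\mathcal{C}_s$-component, which is exactly what you describe; and the paper splits the degree-$3$ reduction into sub-cases by the number of distinct neighbours (one, two, or three), which your sketch compresses into a single blocker argument.
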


The necessity of the conditions is easy to see and follows from the lemma below. The sufficiency will be proved in Sections \ref{ReflectiveSufficiency} and \ref{ReflectiveReductions}.

\begin{lemma}\label{NecessityReflective}
		Every minimally rigid $\Z^2\rtimes\mathcal{C}_s$-gain graph is $\Z^2\rtimes\mathcal{C}_s$-tight.
	\end{lemma}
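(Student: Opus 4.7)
The plan is to show each of the three conditions by bounding the rank of the orbit rigidity submatrix associated with the subgraph in question, then invoking row-independence. The key quantity to identify in each case is the dimension of the space of ``trivial'' symmetric infinitesimal motions that must sit in the kernel of the relevant submatrix.

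First I would establish the overall edge count. The trivial $\Gamma$-symmetric infinitesimal motions of $(\G,\p)$ correspond to rigid motions of $\R^2$ whose linear parts commute with the linear parts of $\Gamma$. Of the three-dimensional space of Euclidean motions, infinitesimal rotations do not commute with the reflection $s$, and among translations $(t_1,t_2)$ only those with $s_l(t_1,t_2)=(t_1,-t_2)=(t_1,t_2)$ survive, i.e.\ the translations parallel to the mirror line. This gives a one-dimensional space of trivial $\Gamma$-symmetric motions. Since $(G,m)$ is rigid, $\mathrm{rank}\,\mathbf{O}(\G,\p,\Gamma)=2|V|-1$, and since $(G,m)$ is independent its rows are linearly independent, so $|E|=2|V|-1$.

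For the $(2,1)$-sparsity of an arbitrary subgraph $G'=(V',E')$, I would restrict the trivial $x$-axis translation to $V'$; this is still a nontrivial element of the kernel of the submatrix of $\mathbf{O}(\G,\p,\Gamma)$ indexed by the rows of $E'$. Independence of these rows yields $|E'|\le 2|V'|-1$.

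For the Purely Periodic Condition, suppose $G'$ has gain space contained in the translation subgroup $\Z^2$ at every vertex. By Lemma~\ref{lemma:swicth}, after a sequence of switching operations (which preserve rigidity and independence) we may assume every edge of $G'$ has gain in $\Z^2$. Writing an edge gain additively as $t\in\Z^2$, the row of $\mathbf{O}(\G,\p,\Gamma)$ for $e=(v_i,v_j;t)$ becomes the standard rigidity-matrix row corresponding to a bar between $p(v_i)$ and $p(v_j)+t$. Any constant translation $u(v)\equiv c\in\R^2$ annihilates every such row, so the kernel of the submatrix for $E'$ contains a two-dimensional space, forcing $|E'|\le 2|V'|-2$. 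Rotations are not in this kernel because the fixed lattice is not preserved by rotation, which is reflected in the calculation $J(p(v_i)-p(v_j))\cdot(p(v_i)-p(v_j)-t)=-J(p(v_i)-p(v_j))\cdot t$ being nonzero generically.

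For the Balanced Condition, if $G'$ is balanced then by Lemma~\ref{lemma:swicth} I may switch so that every edge of $G'$ has identity gain. The corresponding rows of $\mathbf{O}(\G,\p,\Gamma)$ are then exactly the rows of the ordinary rigidity matrix $R(G',p|_{V'})$ of the (non-symmetric) graph $G'$ at the generic configuration $p|_{V'}$. Its kernel contains the full three-dimensional space of trivial Euclidean infinitesimal motions, so the rank is at most $2|V'|-3$ and independence gives $|E'|\le 2|V'|-3$. The only nontrivial point across the four steps is locating the correct dimension of trivial motions in each regime; everything else is a direct application of independence of the orbit rigidity matrix together with the switching lemma.
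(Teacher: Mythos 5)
Your proof is correct and follows essentially the same route as the paper's: identify the $1$-dimensional space of trivial $\Z^2\rtimes\mathcal{C}_s$-symmetric motions for the overall count, then for the purely periodic and balanced conditions apply the switching lemma to reduce to $\Z^2$-gains (respectively identity gains), locate the $2$-dimensional (respectively $3$-dimensional) trivial kernel, and invoke independence of the orbit rigidity matrix rows. Your explicit verification that constant translations annihilate the rows after switching, and the aside about rotations failing to preserve the fixed lattice, merely spell out what the paper delegates to \cite{EGRES}; the argument and its structure are the same.
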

	\begin{proof}
		Suppose that $(G,m)$ is a minimally rigid $\Z^2\rtimes\mathcal{C}_s$-gain graph and let $p:V\to\R^2$ be a generic configuration of $(G,m)$ that derives the $\Z^2\rtimes\mathcal{C}_s$-symmetric framework $(\G,\p)$. We begin by proving that $(G,m)$ is $(2,1)$-tight. It is easy to see that the only trivial $\Z^2\rtimes\mathcal{C}_s$-symmetric infinitesimal motions in the plane are those induced by translations parallel to the reflection axis, giving a space of dimension $1$. Thus, we must have $|E|=2|V|-1$. Similarly, for any subgraph $G'=(V',E')\subseteq G$ that derives a $\Z^2\rtimes\mathcal{C}_s$-symmetric subgraph $\G'=(\V',\EE')\subseteq\G$, we have $\mathrm{rank}(\mathbf{O}(\G',\p|_{\V'},\Z^2\rtimes\mathcal{C}_s))\leq2|V'|-1$. Since $(G,m)$ is independent, it follows that $|E'|\leq2|V'|-1$. This shows that $G$ is $(2,1)$-tight.

		Suppose next that $G'=(V',E')$ is a purely periodic subgraph of $G$ with derived $\Z^2\rtimes\mathcal{C}_s$-symmetric graph $\G'=(\V',\EE')$. By Lemma~\ref{lemma:swicth} and the argument in \cite[Lemma 5.2]{EGRES}, switching operations can be used to consider $(G',m)$ as a $\Z^2$-gain graph without changing the rank of the orbit rigidity matrix. Since $\Z^2$ admits a $2$-dimensional space of trivial periodic infinitesimal motions (corresponding to the space of translations), it follows that $\mathrm{rank}(\mathbf{O}(\G',\p|_{\V'},\Z^2\rtimes\mathcal{C}_s))\leq2|V'|-2$. Since $(G,m)$ is independent, we have $|E'|\leq2|V'|-2$, proving the purely periodic condition.
		
		Finally, to prove necessity of the balanced condition, suppose that $G'=(V',E')$, where $E'\neq \emptyset$, is a balanced subgraph of $G$  with derived $\Z^2\rtimes\mathcal{C}_s$-symmetric graph $\G'=(\V',\EE')$. By Lemma~\ref{lemma:swicth} and the argument in \cite[Lemma 5.2]{EGRES}, switching operations can be used to assign trivial gain to every edge of $G'$, while preserving the rank of the orbit rigidity matrix. As such, $G'$ can be thought of as a simple non-symmetric finite graph.
        Thus, $\mathrm{rank}(\mathbf{O}(\G',\p|_{\V'},\Z^2\rtimes\mathcal{C}_s))\leq2|V'|-3$, and since $(G,m)$ is independent, it follows that $|E'|\leq2|V'|-3$, proving the balanced condition. This gives the result.		
	\end{proof}

\section{Reflectional Periodic Symmetry: Extensions}\label{ReflectiveSufficiency}
To prove that $\Z^2\rtimes\mathcal{C}_s$-tightness is sufficient for minimal rigidity, we will use an inductive approach, which adapts the one in \cite[Theorem 5.1]{Inductive} for purely periodic graphs. 

 The base case of our induction is a $\Z^2\rtimes\mathcal{C}_s$-gain graph on $K_1^1$, which consists of a single vertex with a single loop. Clearly, a $\Z^2\rtimes\mathcal{C}_s$-gain graph on $K_1^1$ is $\Z^2\rtimes\mathcal{C}_s$-tight if and only if the loop is assigned a gain with a non-trivial $\mathcal{C}_s$-component. When this is the case, the orbit rigidity matrix for any generic $\Z^2\rtimes\mathcal{C}_s$-symmetric framework derived from this gain graph consists of a single non-zero row vector of size 2. Since the space of trivial $\Z^2\rtimes\mathcal{C}_s$-symmetric infinitesimal motions in the plane is of dimension 1, any such gain graph is minimally rigid.

We will require the following types of graph extension moves, which are variations of some well established moves in symmetric rigidity theory (see e.g.  \cite{Quotient} and \cite{EGRES}).
	\begin{definition}
		Let $(G,m)$ be a $\Z^2\rtimes\mathcal{C}_s$-gain graph. A \emph{gained 0-extension} forms a new gain graph $(G',m)$ by adding a new vertex $v_0$ with incident edges $e_1=(v_0,v_1;m(e_1))$ and $e_2=(v_0,v_2;m(e_2))$, for some (not necessarily distinct) $v_1,v_2\in V$. This move is illustrated in Figure \ref{Fig0Extension}.
		
		\begin{figure}[H]
			\centering
            \begin{subfigure}{0.4\textwidth}
				\centering
				\begin{tikzpicture}[scale=0.6]
					\node[vertex,label=below:$v_1$] (v1) at (0,-2) {};
					\node[vertex,label=below:$v_2$] (v2) at (2,-2) {};
				\end{tikzpicture}
				{\Huge{$^{\mapsto}$}}
				\begin{tikzpicture}[scale=0.6]
					
					\node[vertex,label=above:$v_0$] (v0b) at (2,2) {};
					\node[vertex,label=below:$v_1$] (v1b) at (0,0) {};
					\node[vertex,label=below:$v_2$] (v2b) at (4,0) {};
					
					\draw[dedge] (v0b)edge node[left,labelsty]{$e_1$}(v1b);
					\draw[dedge] (v0b)edge node[right,labelsty]{$e_2$}(v2b);
				\end{tikzpicture}
				\caption{Two neighbours.}
			\end{subfigure}
			\begin{subfigure}{0.4\textwidth}
				\centering
				\begin{tikzpicture}[scale=0.6]
					\node[vertex,label=below:$v_1$] (v1) at (0,-2) {};
				\end{tikzpicture}
				{\Huge{$^{\mapsto}$}}
				\begin{tikzpicture}[scale=0.6]
					
					\node[vertex,label=above:$v_0$] (v0b) at (1,2) {};
					\node[vertex,label=below:$v_1$] (v1b) at (1,0) {};
					\draw[dedge] (v0b)edge [bend right=40] node[left,labelsty]{$e_1$}(v1b);
					\draw[dedge] (v0b)edge [bend left=40] node[right,labelsty]{$e_2$}(v1b);
				\end{tikzpicture}
				\caption{One neighbour.}
			\end{subfigure}
			\caption{The two variations of the $0$-extension.}
			\label{Fig0Extension}
		\end{figure}

A \emph{gained 1-extension} forms a new gain graph $(G',m)$ by removing an edge $e=(v_1,v_2;m(e))\in E$ and adding a new vertex $v_0$ with incident edges $e_1=(v_0,v_1;m(e_1))$, $e_2=(v_0,v_2;m(e_2))$ and $e_3=(v_0,v_3;m(e_3))$, for some $v_3\in V$, with the additional requirement that $(m(e_1))^{-1}m(e_2)=m(e)$. Note that the vertices $v_1,v_2,v_3\in V$ need not be distinct. This move is illustrated in Figure \ref{Fig1Extension}.
		
		\begin{figure}[H]
			\centering

			\begin{subfigure}{0.49\textwidth}
				\centering
				\begin{tikzpicture}[scale=0.6]
					\node[vertex,label=below:$v_1$] (v1) at (-2,-1) {};
					\node[vertex,label=below:$v_2$] (v2) at (0,-1) {};
					\node[vertex,label=below:$v_3$] (v3) at (2,-1) {};
					
					\draw[dedge] (v1)edge node[above,labelsty]{$e$}(v2);
				\end{tikzpicture}
				{\Huge{$^{\mapsto}$}}
				\begin{tikzpicture}[scale=0.6]
					
					\node[vertex,label=above:$v_0$] (v0) at (0,1) {};
					\node[vertex,label=below:$v_1$] (v1) at (-2,-1) {};
					\node[vertex,label=below:$v_2$] (v2) at (0,-1) {};
					\node[vertex,label=below:$v_3$] (v3) at (2,-1) {};
					
					\draw[dedge] (v0)edge node[left,labelsty]{$e_1$}(v1);
					\draw[dedge] (v0)edge node[left,labelsty]{$e_2$}(v2);
					\draw[dedge] (v0)edge node[right,labelsty]{$e_3$}(v3);
				\end{tikzpicture}
				\caption{Non-loop to three neighbours.}
				\label{Fig1ExtensionNon3}
			\end{subfigure}
			\begin{subfigure}{0.49\textwidth}
				\centering
				\begin{tikzpicture}[scale=0.6]
					\node[vertex,label=below:$v_1$] (v1) at (-2,-1) {};
					\node[vertex,label=below:$v_2$] (v2) at (0,-1) {};
					
					\draw[dedge] (v1)edge node[above,labelsty]{$e$}(v2);
				\end{tikzpicture}
				{\Huge{$^{\mapsto}$}}
				\begin{tikzpicture}[scale=0.6]
					
					\node[vertex,label=above:$v_0$] (v0) at (0,1) {};
					\node[vertex,label=below:$v_1$] (v1) at (-2,-1) {};
					\node[vertex,label=below:$v_2$] (v2) at (2,-1) {};
					
					\draw[dedge] (v0)edge node[left,labelsty]{$e_1$}(v1);
					\draw[dedge] (v0)edge node[left,labelsty]{$e_2$}(v2);
					\draw[dedge] (v0)edge [bend left=40] node[right,labelsty]{$e_3$}(v2);
				\end{tikzpicture}
				\caption{Non-loop to two neighbours.}
				\label{Fig1ExtensionNon2}
			\end{subfigure}
            \begin{subfigure}{0.49\textwidth}
				\centering
				\begin{tikzpicture}[scale=1.2]
					\node[vertex,label=below:$v_1$] (v1) at (-2,-1) {};
					\node[vertex,label=below:$v_3$] (v3) at (0,-1) {};
					
					\draw[dedge] (v1)to [in=60,out=120,loop] node[above,labelsty]{$e$}(v1);
				\end{tikzpicture}
				{\Huge{$^{\mapsto}$}}
				\begin{tikzpicture}[scale=0.6]
					
					\node[vertex,label=above:$v_0$] (v0) at (0,1) {};
					\node[vertex,label=below:$v_1$] (v1) at (-2,-1) {};
					\node[vertex,label=below:$v_3$] (v3) at (2,-1) {};
					
					\draw[dedge] (v0)edge [bend right=40] node[left,labelsty]{$e_1$}(v1);
					\draw[dedge] (v0)edge node[right,labelsty]{$e_2$}(v1);
					\draw[dedge] (v0)edge node[right,labelsty]{$e_3$}(v3);
				\end{tikzpicture}
				\caption{Loop to two neighbours.}
				\label{Fig1ExtensionLoop2}
			\end{subfigure}
            \begin{subfigure}{0.49\textwidth}
				\centering
				\begin{tikzpicture}[scale=1.2]
					\node[vertex,label=below:$v_1$] (v1) at (-2,-1) {};
					\node[ivertex,label=below:\textcolor{white}{$v_2$}] (v2) at (-1,-1) {};%ADDED TO ALIGN ARROWS WITH FIGURE C.

					\draw[dedge] (v1)to [in=60,out=120,loop] node[above,labelsty]{$e$}(v1);
				\end{tikzpicture}
				{\Huge{$^{\mapsto}$}}
				\begin{tikzpicture}[scale=0.6]
					
					\node[vertex,label=above:$v_0$] (v0) at (0,1) {};
					\node[vertex,label=below:$v_1$] (v1) at (0,-1) {};
					\draw[edge,white] (-1,0)edge node[left,labelsty]{\textcolor{black}{$e_1$}} (-1,0);
					\draw[edge,white] (1,0)edge node[right,labelsty]{\textcolor{black}{$e_3$}} (1,0);

                    %INVISIBLE VERTICES TO ALIGN ARROWS.
                    \node[ivertex,label=below:\textcolor{white}{$v_1$}] (v1a) at (-2,-1) {};
					\node[ivertex,label=below:\textcolor{white}{$v_2$}] (v2a) at (2,-1) {};

					\draw[dedge] (v0)edge node[right,labelsty]{$e_2$}(v1);

					\draw[dedge] (v0) to [in=90,out=180](-1,0) to [in=180,out=270] (v1);
					\draw[dedge] (v0) to [in=90,out=0](1,0) to [in=0,out=270] (v1);

				\end{tikzpicture}
				\caption{Loop to one neighbour.}
				\label{Fig1ExtensionLoop1}
			\end{subfigure}
			\caption{The four variations of the $1$-extension.}
			\label{Fig1Extension}
		\end{figure}
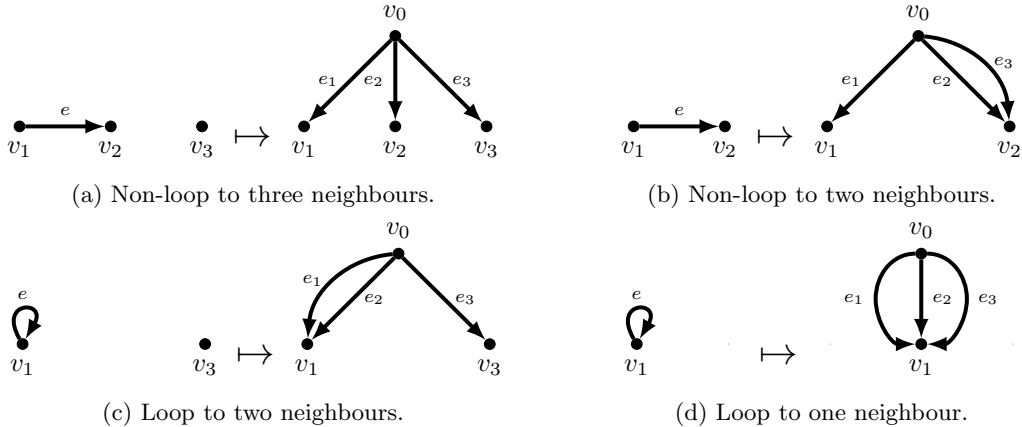

		A \emph{gained loop-1-extension} forms a new gain graph $(G',m)$ by adding a new vertex $v_0$ with incident edges $l=(v_0,v_0;m(l))$ and $e=(v_0,v_1;m(e))$, for some $v_1\in V$, with the additional requirement that $m(l)$ has a non-trivial $\mathcal{C}_s$-component. This move is illustrated in Figure \ref{FigLoop1Extension}.
		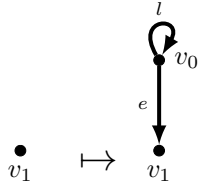
\begin{figure}[H]
			\centering
				\begin{tikzpicture}[scale=1.2]
					\node[vertex,white,label=right:\textcolor{white}{$v_0$}] (v0) at  (0,1) {};
					\node[vertex,label=below:$v_1$] (v1) at (0,0) {};
					\draw[dedge,white] (v0)to [in=60,out=120,loop] node[above,labelsty]{$l$}(v0);
					\draw[edge,white] (v0)to node[left,labelsty]{$e$}(v1);
				\end{tikzpicture}
				{\Huge{$^{\mapsto}$}}
				\begin{tikzpicture}[scale=1.2]
					\node[vertex,label=right:$v_0$] (v0) at  (0,1) {};
					\node[vertex,label=below:$v_1$] (v1) at (0,0) {};
					\draw[dedge] (v0)to [in=60,out=120,loop] node[above,labelsty]{$l$}(v0);
					\draw[dedge] (v0)to node[left,labelsty]{$e$}(v1);
				\end{tikzpicture}
			\caption{The loop-$1$-extension. Note that the gain $m(l)$ must have a non-trivial $\mathcal{C}_s$-component.}
			\label{FigLoop1Extension}
		\end{figure}
        \emph{Gained 0-reductions}, \emph{1-reductions} and \emph{loop-1-reductions} are the inverse moves of gained $0$-extensions, $1$-extensions and loop-$1$-extensions respectively.
	\end{definition}
    We now show that each of these extensions preserves minimal rigidity of $\Z^2\rtimes\mathcal{C}_s$-gain graphs.
    \begin{proposition}\label{ZeroExt}
		Let $(G,m)$ be a minimally rigid $\Z^2\rtimes\mathcal{C}_s$-gain graph and let $(G',m)$ be formed by a gained $0$-extension of $(G,m)$. Then $(G',m)$ is minimally rigid.
	\end{proposition}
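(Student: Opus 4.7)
The plan is to deduce minimal rigidity of $(G', m)$ by extending a generic configuration of $(G, m)$ and showing that the orbit rigidity matrix gains exactly two linearly independent rows while the column count grows by $2$.

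Since $(G, m)$ is $(2,1)$-tight by Lemma~\ref{NecessityReflective}, $|E'| = |E| + 2 = 2|V'| - 1$, which is the correct edge count for $\Z^2\rtimes\mathcal{C}_s$-tightness. Fix a generic $p : V \to \R^2$, so that by minimal rigidity the rows of $\mathbf{O}(\G, \p, \Gamma)$ are linearly independent and span a space of dimension $|E|$. Extend $p$ to $p' : V' \to \R^2$ by choosing $p(v_0) \in \R^2$. Since $e_1, e_2$ are the only edges incident to $v_0$ (and are not loops), the matrix $\mathbf{O}(\G', \p', \Gamma)$ has the block form
\[
\mathbf{O}(\G', \p', \Gamma) = \begin{pmatrix} \mathbf{O}(\G, \p, \Gamma) & \mathbf{0} \\ A & B \end{pmatrix},
\]
with $B$ the $2 \times 2$ block in the columns for $v_0$, whose rows are $(p(v_0) - m(e_i) p(v_i))^T$ for $i = 1, 2$.

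The key step is to verify that $B$ is invertible for a suitable choice of $p'$. A direct expansion shows that $\det B$, viewed as a polynomial in the coordinates of $p(v_0)$, is affine-linear and identically zero precisely when $m(e_1) p(v_1) = m(e_2) p(v_2)$ in $\R^2$. If $v_1 \neq v_2$, this equality is a codimension-$2$ condition on the pair $(p(v_1), p(v_2))$ and fails for generic $p$. If $v_1 = v_2$, the gain-graph axioms force $m(e_1) \neq m(e_2)$ (after orienting both edges from $v_0$), so $g := m(e_1)^{-1} m(e_2)$ is a non-identity element of $\Z^2 \rtimes \mathcal{C}_s$; every such element fixes at most a line of $\R^2$, so $g \cdot p(v_1) \neq p(v_1)$ generically. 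In either case, a generic choice of $p'$ makes $B$ invertible; the two new rows are then linearly independent of one another and, since the old rows vanish in the $v_0$ columns, independent of those as well.

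It follows that $\mathrm{rank}\, \mathbf{O}(\G', \p', \Gamma) = |E| + 2 = |E'| = 2|V'| - 1$, which matches both the number of rows (yielding independence) and the codimension of the trivial $\Z^2\rtimes\mathcal{C}_s$-motion space (yielding rigidity). By semi-continuity of matrix rank, the rank on the generic locus of configurations of $(G', m)$ is at least $|E'|$, hence equals $|E'|$, so $(G', m)$ is minimally rigid. The only place where the argument demands care is the one-neighbour subcase, where the non-degeneracy of $B$ relies essentially on the prohibition against repeated gains on parallel edges; all other cases reduce to a transversality statement about generic points in $\R^2$.
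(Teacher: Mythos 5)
Your proof is correct and follows essentially the same approach as the paper, which simply defers to the standard genericity argument of \cite[Lemma 6.1]{EGRES}: exhibit a block-triangular structure for $\mathbf{O}(\G',\p',\Gamma)$ and choose $p(v_0)$ so that the $2\times 2$ block $B$ is nonsingular, which is exactly the ``place the new vertex so that it is not collinear with its (derived) neighbours'' condition. You have correctly identified the one point requiring care — when $v_1=v_2$, nonvanishing of $\det B$ relies on the gain-graph axiom that parallel edges carry distinct gains, combined with the fact that no non-identity element of $\Z^2\rtimes\mathcal{C}_s$ fixes an open set — and your semicontinuity step to pass from a single good $p'$ to all generic configurations is sound.
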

    \begin{proof}
        This follows from a standard argument, which involves placing the new vertex so that it is not collinear with its neighbours. See sources such as \cite[Lemma 6.1]{EGRES} for details. 
    \end{proof}
    \begin{proposition}\label{OneExt}
		Let $(G,m)$ be a minimally rigid $\Z^2\rtimes \mathcal{C}_s$-gain graph and let $(G',m)$ be formed by a gained $1$-extension of $(G,m)$. Then $(G',m)$ is minimally rigid.
	\end{proposition}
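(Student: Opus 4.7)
The counts are preserved by a $1$-extension: the move adds one vertex and replaces one edge by three, so $|E'|=2|V'|-1$. As in the proof of Lemma~\ref{NecessityReflective}, minimal rigidity of $(G',m)$ is therefore equivalent to row-independence of $\mathbf{O}(\widetilde{G'},\widetilde{p'},\Gamma)$ for some configuration $p'$. The plan is to start from a generic configuration $p$ of $(G,m)$ and extend it to a carefully chosen $p'$ for which independence can be verified directly.

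Adapting the classical Whiteley $1$-extension trick to this symmetric setting, I would place the new vertex on the affine line $\ell\subset\R^2$ through the points $m(e_1)p(v_1)$ and $m(e_2)p(v_2)$, writing $p(v_0)=(1-t)\,m(e_1)p(v_1)+t\,m(e_2)p(v_2)$. The hypothesis $m(e_1)^{-1}m(e_2)=m(e)$ is precisely what makes this placement interact well with the orbit rigidity matrix: a short calculation using the affine action of $\Z^2\rtimes\mathcal{C}_s$ yields
\begin{equation*}
(1-t)\,\mathrm{row}(e_1)+t\,\mathrm{row}(e_2)=t(1-t)\,\mathrm{row}(e),
\end{equation*}
where $\mathrm{row}(e)$ denotes the row of $\mathbf{O}(\G,\p,\Gamma)$ corresponding to the removed edge $e$, extended by zeros in the two new $v_0$-columns. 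Row-reducing $\mathbf{O}(\widetilde{G'},\widetilde{p'},\Gamma)$ along this relation exhibits it as a block sum of $\mathbf{O}(\G,\p,\Gamma)$ — independent by hypothesis, with vanishing entries in the $v_0$-columns — together with two residual rows whose $v_0$-blocks are $p(v_0)-m(e_2)p(v_2)$ (a vector along $\ell$) and $p(v_0)-m(e_3)p(v_3)$. Independence of the full matrix thus reduces to the non-collinearity condition $m(e_3)p(v_3)\notin\ell$.

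In sub-cases (a), (b) and (c) of Figure~\ref{Fig1Extension}, genericity of $p$ together with the freedom to adjust the parameter $t$ makes this non-collinearity hold outside a proper Zariski-closed set, and the argument concludes after a further generic perturbation to re-establish genericity of $p'$. The main obstacle, flagged by the authors, is sub-case (d) — the loop-to-one-neighbour variant — where $v_1=v_2=v_3$, so the three relevant points collapse to images of the single configuration point $p(v_1)$ under three distinct group elements $m(e_1),m(e_2),m(e_3)$. Here, whenever the linear parts of $m(e)$ and $m(e_1)^{-1}m(e_3)$ both act as translations (or more generally produce parallel displacements) in a common direction, the three points are collinear for every choice of $p(v_1)$ and the $\ell$-placement argument breaks down. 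A different strategy is needed for case (d): I would expect either a perturbation of $v_0$ off $\ell$ combined with a rank-continuity argument, or a direct kernel computation exploiting the $\mathcal{C}_s$-component of the gain of the removed loop to rule out the obstructive gain patterns within the three new parallel edges.
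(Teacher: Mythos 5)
Your treatment of cases (a), (b), (c) of Figure~\ref{Fig1Extension} matches the paper's: both reduce to the standard Whiteley-style row-reduction along the line through two neighbour images, citing genericity to put the third neighbour off that line. You also correctly identify that the obstacle in case (d) is that, when all three gains $m(e_1),m(e_2),m(e_3)$ share the same horizontal-translation component, the three images $m(e_i)p(v_1)$ lie on a common vertical line for \emph{every} configuration, so the collinearity can never be avoided by genericity.

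However, your proposal does not actually prove case (d); it only names two candidate strategies and stops, so there is a genuine gap precisely at the point the authors flag as requiring new work. Of the two strategies you float, the first (``perturbation of $v_0$ off $\ell$ combined with a rank-continuity argument'') cannot succeed on its own: rank is lower semicontinuous, so a continuity argument can only propagate a rank bound you have already established at some configuration, and you have none, since the collinearity is not a non-generic degeneracy that perturbation removes. The second suggestion (``direct kernel computation'') is the one the paper actually carries out, but the missing content is the key choice that makes it work: after switching to put $m(e_1)=(0,0,s)$, $m(e_2)=(0,d_2,0)$, $m(e_3)=(0,d_3,0)$ with $d_2\neq d_3$, one places $p'(v_0)=(a_0,b)$ at the \emph{same height} $b\neq 0$ as $p(v_1)=(a_1,b)$ but with $a_0\neq a_1$. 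With this configuration the constraints from $e_2$ and $e_3$ force $y_0=y_1$ and then $x_0=0$, and the constraint from $e_1$ (which, crucially, carries the nontrivial $\mathcal{C}_s$ component) then forces $y_1=0$; hence the motion is trivial and $(G',m)$ is minimally rigid. Without this explicit configuration and the ensuing elimination, the proposal leaves the hard case of the proposition unresolved. One further small inaccuracy: you suggest ``ruling out the obstructive gain patterns,'' but the pattern genuinely occurs and must be handled, not excluded.
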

    \begin{proof}
        We consider each of the cases seen in subfigures of Figure \ref{Fig1Extension}. The proofs for the cases illustrated in Figures \ref{Fig1ExtensionNon3}, \ref{Fig1ExtensionNon2} and \ref{Fig1ExtensionLoop2} follow by a standard argument, such as that seen in \cite[Lemma 6.1]{EGRES}. This argument uses genericity to assume that the derived neighbours of the new vertex are not collinear.

        This leaves the case illustrated in Figure \ref{Fig1ExtensionLoop1}, where a $1$-extension is performed on a loop and gives a triple of parallel edges. If there exists a configuration of the derived graph in which the derived neighbours of the new vertex are not collinear, then it is again possible to apply the standard method from \cite[Lemma 6.1]{EGRES}. The only case where this is not possible is where the edges incident to the new vertex all have the same horizontal-translation-gain component (this being parallel to the axis of reflection). A different method is needed to complete the proof in this case, which we now describe.

        Suppose that $(G',m)$ is formed from $(G,m)$ by a gained $1$-extension that adds a new vertex $v_0$ with incident edges $e_1=(v_0,v_1;m(e_1))$, $e_2=(v_0,v_1;m(e_2))$ and $e_3=(v_0,v_1;m(e_3))$ such that these edges all have the same horizontal-translation-gain component. By switching operations, we may assume w.l.o.g. that $m(e_1)=(0,0,s)$, $m(e_2)=(0,d_2,0)$ and $m(e_3)=(0,d_3,0)$ for some $d_2\neq d_3\in\Z$. (To see this, note that the gain of the loop at $v_0$ in $G$ must have a non-trivial $\mathcal{C}_s$-component, for otherwise the loop is dependent. By switching by the reflection $s$ at $v_0$, we may make $e_3$ have a trivial $\mathcal{C}_s$-component. By definition of $1$-extension, exactly one of $e_1$ and $e_2$ will also have trivial $\mathcal{C}_s$-component. Since the horizontal-translation gain component must be the same for all three edges, we may then switch by translations to obtain the desired form). Let $p:V\to\R^2$ be a generic configuration of $G$ with $p(v_1)=(a_1,b)$, for some $a_1,b\in\R$ with $b\neq0$. Let $(\G,\p)$ be the $\Z^2\rtimes\mathcal{C}_s$-symmetric framework derived by $p$. Extend $p$ to a configuration $p':V'\to\R^2$ of $(G',m)$ with $p'|_V=p$ and $p'(v_0)=(a_0,b)$, for some $a_0\in\R$ with $a_0\neq a_1$. Let $(\G',\p')$ be the $\Z^2\rtimes\mathcal{C}_s$-symmetric framework derived by $p'$ and suppose that this has a $\Z^2\rtimes\mathcal{C}_s$-symmetric infinitesimal motion $\tilde{u}:\tilde{V}'\to\R^2$. We aim to show that $\tilde{u}$ must be trivial.
        
        Let $u:V'\to\R^2$ be the restriction of $\tilde{u}$ to the set of vertex orbit representatives used to form the gain graph $(G',m)$. By adding a trivial infinitesimal motion induced by a horizontal translation, it can be assumed that $u(v_1)$ is a vertical vector. For some $x_0,y_0,y_1\in\R$, let $u(v_1)=(0,y_1)$ and let $u(v_0)=(x_0,y_0)$. 
        We will show that $u(v_0)=u(v_1)=(0,0)$, which will imply that $\tilde{u}$ is trivial, since $(\G,\p)$  is $\Z^2\rtimes\mathcal{C}_s$-symmetrically infinitesimally rigid.
        The constraint imposed by $e_2$ requires that
	\begin{align*}
		\begin{pmatrix}
			a_0-a_1\\-d_2
		\end{pmatrix}
		\cdot
		\begin{pmatrix}
			x_0\\y_0-y_1
		\end{pmatrix}
		=0.
	\end{align*}
	Hence, $x_0(a_0-a_1)-d_2(y_0-y_1)=0$. Likewise, the constraint imposed by $e_3$ is
	\begin{align*}
		\begin{pmatrix}
			a_0-a_1\\-d_3
		\end{pmatrix}
		\cdot
		\begin{pmatrix}
			x_0\\y_0-y_1
		\end{pmatrix}
		=0.
	\end{align*}
	Hence, $x_0(a_0-a_1)-d_3(y_0-y_1)=0$. Combining these constraints shows that $d_2(y_0-y_1)=d_3(y_0-y_1)$. Since $d_2\neq d_3$,  this implies that $y_0=y_1$. Applying this to the constraint imposed by $e_2$ shows that $x_0(a_0-a_1)=0$. Since $a_0\neq a_1$, it follows that $x_0=0$.
    
	Since $x_0=0$ and $y_0=y_1$, the constraint imposed by $e_1$ is
	\begin{align*}
		\begin{pmatrix}
			a_0-a_1\\2b
		\end{pmatrix}
		\cdot
		\begin{pmatrix}
			0\\2y_1
		\end{pmatrix}
		=0.
	\end{align*}
	Hence, $4by_1=0$. By the choice of configuration, $b\neq0$. This means that $y_1=0$ and therefore $u(v_0)=u(v_1)=(0,0)$, as desired. 
\end{proof}
    \begin{proposition}\label{LoopOneExt}
		Let $(G,m)$ be a minimally rigid $\Z^2\rtimes\mathcal{C}_s$-gain graph and let $(G',m)$ be formed by a gained loop-$1$-extension of $(G,m)$. Then $(G',m)$ is minimally rigid.
	\end{proposition}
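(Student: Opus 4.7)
The plan is to adapt the strategy from the proof of Proposition \ref{OneExt}. I would choose a generic configuration $p$ of $(G, m)$ and extend it to a configuration $p'$ of $(G', m)$ by placing the new vertex $v_0$ at a suitably generic point, and then show that any $\Z^2\rtimes\mathcal{C}_s$-symmetric infinitesimal motion $\tilde{u}$ of the derived framework $(\tilde{G}',\tilde{p}')$ must be trivial. Minimal rigidity will then follow from a count of rows of the orbit rigidity matrix.

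The key preliminary step is to exploit the hypothesis that $m(l)$ has a non-trivial $\mathcal{C}_s$-component. Using switching operations at $v_0$, I can assume $m(l) = ((t_1, t_2), s)$, where $s$ denotes the reflection fixing the $x$-axis. Writing $p'(v_0) = (a_0, b_0)$, a direct computation shows
\begin{align*}
2p'(v_0) - m(l) p'(v_0) - (m(l))^{-1} p'(v_0) = (0,\, 4b_0 - 2t_2).
\end{align*}
Thus, for generic $b_0$, the loop $l$ contributes a row of the orbit rigidity matrix whose only nonzero entry is in the second-coordinate column for $v_0$. This computation is the heart of the argument and is precisely where the $\mathcal{C}_s$-component hypothesis is used; a purely periodic loop gain would give a zero row.

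Next, let $u$ be the restriction of $\tilde{u}$ to the chosen vertex orbit representatives. The constraints imposed by edges of $G$ on $\tilde u$ show that $u|_V$ extends to a $\Z^2\rtimes\mathcal{C}_s$-symmetric infinitesimal motion of $(\tilde{G},\tilde{p})$, which by the minimal rigidity of $(G,m)$ must be trivial, i.e.\ a horizontal translation. Subtracting this trivial motion, I may assume $u|_V = 0$. The loop constraint then forces $u(v_0)$ to be horizontal, say $u(v_0) = (x_0, 0)$. Writing out the constraint from $e = (v_0, v_1; m(e))$ and substituting $u(v_1) = 0$, the only surviving term is $(p'(v_0) - m(e) p'(v_1))_1 \cdot x_0 = 0$; for generic $a_0$ this coefficient is nonzero (a check of the two possible $\mathcal{C}_s$-components of $m(e)$ confirms this), so $x_0 = 0$. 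Hence $\tilde{u}$ is trivial, so $(G',m)$ is rigid.

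For independence, observe that the loop-$1$-extension adds exactly one vertex and two edges, so $|E'| = 2|V'| - 1$. Since the trivial motion space has dimension $1$, this is the maximum possible rank of the orbit rigidity matrix, and rigidity shows this rank is attained. Thus the $|E'|$ rows are linearly independent, completing the argument. The main obstacle is really the loop row computation: once we have shown that the loop contributes a nonzero horizontal-only row, the remainder is a standard genericity argument along the lines of Propositions \ref{ZeroExt} and \ref{OneExt}.
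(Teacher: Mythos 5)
Your proposal is correct and is essentially the standard argument the paper points to (\cite[Lemma 6.1]{EGRES}) without writing it out: restrict the putative motion to the old framework, use minimal rigidity of $(G,m)$ to kill it up to a horizontal translation, then use the two new rows to pin down $u(v_0)$. The one piece of genuine content you supply that the paper leaves implicit is the computation that the loop row is $(0,4b_0-2t_2)$ in the $v_0$-columns, which is nonzero for generic $b_0$ exactly because $m(l)$ has nontrivial $\mathcal{C}_s$-component; this is correct and is indeed the crux, though note that the switching step is superfluous since any such gain already has the form $((t_1,t_2),s)$.
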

    \begin{proof}
        Again, this can be proved using the standard approach from \cite[Lemma 6.1]{EGRES}
    \end{proof}

    \section{Reflectional Periodic Symmetry: Reductions}\label{ReflectiveReductions}
    The main aim of this section is to complete the proof of Theorem \ref{Reflective} by proving the following result.
	\begin{theorem}\label{Induction}
		A $\Z^2\rtimes\mathcal{C}_s$-gain graph is $\Z^2\rtimes\mathcal{C}_s$-tight if and only if it can be obtained from a $\Z^2\rtimes\mathcal{C}_s$-tight gain graph on $K_1^1$ using a sequence of gained $0$-extensions, $1$-extensions and loop-$1$-extensions.
	\end{theorem}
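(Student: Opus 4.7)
The plan is to establish each direction of the equivalence separately, exploiting the results already proved. The forward direction---that every graph produced by the inductive construction is $\Z^2\rtimes\mathcal{C}_s$-tight---follows essentially for free: the base case $K_1^1$ equipped with a loop of non-trivial $\mathcal{C}_s$-component is minimally rigid (as noted at the start of Section~\ref{ReflectiveSufficiency}), Propositions~\ref{ZeroExt}, \ref{OneExt} and \ref{LoopOneExt} show that each of the three extensions preserves minimal rigidity, and Lemma~\ref{NecessityReflective} then gives $\Z^2\rtimes\mathcal{C}_s$-tightness. So any graph reached by a sequence of extensions is tight.

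The backward direction carries the real combinatorial content: I would prove by induction on $|V|$ that every $\Z^2\rtimes\mathcal{C}_s$-tight gain graph $(G,m)$ with more than one vertex admits a $0$-reduction, $1$-reduction or loop-$1$-reduction to a smaller $\Z^2\rtimes\mathcal{C}_s$-tight gain graph. Since $|E|=2|V|-1$ and each loop contributes $2$ to the degree sum, some vertex $v_0$ has degree at most $3$. If $\deg(v_0)=2$, a $0$-reduction removes $v_0$ and its two incident edges; the resulting graph satisfies $|E'|=2|V'|-1$, and the three sparsity conditions are inherited from $(G,m)$ since no new subgraph is created. If $\deg(v_0)=3$ and $v_0$ carries a loop $\ell$, then $\ell$ must have a non-trivial $\mathcal{C}_s$-component, for otherwise $(\{v_0\},\{\ell\})$ would be a purely periodic subgraph with $|E'|=1>0=2|V'|-2$, violating condition~(2) of Definition~\ref{DefReflective}; hence a loop-$1$-reduction applies.

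The main obstacle is the remaining case, where $\deg(v_0)=3$ and $v_0$ has three non-loop edges $e_1,e_2,e_3$ to neighbours $v_1,v_2,v_3$ (not necessarily distinct). A $1$-reduction here amounts to choosing an unordered pair $\{e_i,e_j\}$ and replacing those two edges with a single edge between the corresponding neighbours carrying gain $m(e_i)^{-1}m(e_j)$ (or a loop if the two neighbours coincide, in which case this combined gain must be non-trivial, and for purely periodic/balanced obstructions it must fall outside the offending subgroup). The resulting graph automatically satisfies the global count $|E''|=2|V''|-1$, so only subgraph conditions can fail. Call a subgraph $H$ of the reduced graph a \emph{blocker} for the pairing $\{i,j\}$ if $H$ contains the newly added edge and $H$ already saturates one of the three sparsity bounds after the edge is added. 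The goal is to prove that the three possible pairings $\{1,2\},\{1,3\},\{2,3\}$ cannot all be blocked: assuming they are, the union of the three blockers together with $v_0$ and its three incident edges forms a subgraph $H^\ast$ of $(G,m)$ whose edge count, together with the conditions forced on its gain space, contradicts $\Z^2\rtimes\mathcal{C}_s$-tightness of $G$.

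The hard part is this blocker-union analysis, which splits into cases according to which of the three conditions is saturated by each blocker, and is complicated by the non-abelian structure of $\Z^2\rtimes\mathcal{C}_s$: when blockers overlap, one must control how their gain spaces combine inside the union. The plan is to first use Lemma~\ref{lemma:swicth} to normalise gains within each individual blocker (trivial gains on balanced blockers, translational gains on purely periodic ones), and then track, for each pair of blockers sharing vertices, how the combined gain space extends. A separate subcase handles coincidences among $v_1,v_2,v_3$ (so that the combined edge is a loop, activating the $\mathcal{C}_s$-component constraint), and a further counting argument may be needed if at the chosen $v_0$ every pairing is blocked but the blocker-union contradiction requires passing to another low-degree vertex. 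This is where I expect the proof to be longest and most delicate.
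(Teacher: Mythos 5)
Your strategy matches the paper's at the structural level---forward direction routed through minimal rigidity (via Propositions~\ref{ZeroExt}--\ref{LoopOneExt} and Lemma~\ref{NecessityReflective}, a valid variant of the paper's direct verification), backward direction by minimum degree plus a blocker analysis at the chosen vertex---but you explicitly leave the blocker analysis as a plan, and that is where essentially all of the theorem's content lives. The paper splits the degree-$3$ non-loop case by the number of distinct neighbours (one, two, or three: Propositions~\ref{Degree3Neighbour1}, \ref{Degree3Neighbour2}, \ref{Degree3Neighbour3}) and needs roughly a dozen supporting lemmas (\ref{SubraphInclusion}, \ref{Lattice}--\ref{Count22Tight23}, \ref{UnionGainBalanced}, \ref{UnionGain}, \ref{ConnectedSmall}, \ref{Sparse23Pair}, \ref{Sparse23PairSmall}, and the case lemmas \ref{Degree3Neighbour2Loop}--\ref{Degree3Neighbour2MatchLoop} and \ref{Condition1Once}--\ref{Reflective8}). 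Two of these (Lemma~\ref{UnionGain}, which shows the union of purely periodic blockers with connected intersection is purely periodic, and Lemma~\ref{Degree3Neighbour2MatchLoop}, which handles the case where the loop reduction $e_{23}$ is automatically blocked because $m(e_2)$ and $m(e_3)$ share a $\mathcal{C}_s$-component) are precisely the new ingredients that this group requires beyond the purely periodic case of \cite{Inductive}. Sketching ``normalise by switching and track gain spaces'' does not dispose of them: you would still need to prove that the gain-space and counting arguments mesh, which is where the delicacy is.

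There are also two smaller misalignments worth flagging. First, the paper's contradictions are mostly obtained from the union of \emph{two} blockers at a time (e.g.\ Lemmas~\ref{Reflective6}, \ref{Degree3Neighbour2MatchLoop}, \ref{Reflective8}); the union of all three blockers plus $v_0$, which you propose as the main engine, appears only in the special subcase where every pairwise intersection is a single vertex (inside Lemmas~\ref{Reflective7} and~\ref{Reflective5}). Second, you hedge that ``a further counting argument may be needed if\ldots the blocker-union contradiction requires passing to another low-degree vertex''---in fact the paper establishes that \emph{every} vertex of degree $2$ or $3$ admits an admissible reduction, so no such passage is ever required, and allowing it would complicate the induction unnecessarily. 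Neither of these is fatal to the plan, but the central gap---the unproven blocker-union case analysis---means this is a roadmap rather than a proof.
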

    As previously mentioned, any $\Z^2\rtimes\mathcal{C}_s$-tight gain graph on $K_1^1$ is minimally rigid. Propositions \ref{ZeroExt}, \ref{OneExt} and \ref{LoopOneExt} show that each of the extensions listed in Theorem \ref{Induction} preserves minimal rigidity. Hence, Theorem \ref{Induction} will show that every $\Z^2\rtimes\mathcal{C}_s$-tight gain graph is minimally rigid, completing the proof of Theorem \ref{Reflective}.

    We split the proof of Theorem~\ref{Induction} into several lemmas. To begin the proof, it is easy to see that each of the extensions preserves $\Z^2\rtimes\mathcal{C}_s$-tightness. It therefore remains only to show that every $\Z^2\rtimes\mathcal{C}_s$-tight gain graph can be obtained from a $\Z^2\rtimes\mathcal{C}_s$-tight gain graph on $K_1^1$ by a sequence of extensions. This can be proved by induction on the number of vertices. Basic counting arguments show that every $(2,1)$-tight multigraph has a vertex of degree $2$ or $3$. It is therefore enough to show that every vertex of degree $2$ or $3$ in a $\Z^2\rtimes\mathcal{C}_s$-tight gain graph admits a $0$-reduction, $1$-reduction or loop-$1$-reduction that preserves $\Z^2\rtimes\mathcal{C}_s$-tightness. Since $K_1^1$ is the only $(2,1)$-tight multigraph on a single vertex, such a sequence of reductions will eventually terminate at a gain graph on $K_1^1$. This will prove Theorem \ref{Induction} by induction.
    
    Different methods will be required depending on the degree and the number of neighbours of the vertex. In the case of a degree $3$ vertex with only one neighbour, there are two separate cases, corresponding to either a loop and another edge, or a triple of parallel edges (recall Figures~\ref{Fig1Extension}(d) and~\ref{FigLoop1Extension}).

    The following lemma will be used repeatedly throughout this section. Similar calculations have been used to prove related results in various sources, such as \cite[Theorem 5]{Pebble} and \cite[Lemma 4.4]{Inductive}.
    \begin{lemma}\label{SubraphInclusion}
        Let $l_A,l_B\in\N_0$. Let $G$ be a multigraph with subgraphs $G_A=(V_A,E_A)$ and $G_B=(V_B,E_B)$ such that $|E_A|=2|V_A|-l_A$ and $|E_B|=2|V_B|-l_B$. Then
        \begin{align*}
            |E_A\cup E_B|+|E_A\cap E_B|=2|V_A\cup V_B|+2|V_A\cap V_B|-l_A-l_B.
        \end{align*}
    \end{lemma}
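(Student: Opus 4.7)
The statement is a direct application of inclusion–exclusion on the vertex and edge sets, so my plan is to simply rearrange both sides and observe that they collapse to the same sum. The key identities are the standard set-theoretic ones: for any finite sets $X,Y$, we have
\begin{equation*}
|X \cup Y| + |X \cap Y| = |X| + |Y|.
\end{equation*}

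Applying this to $E_A, E_B$ gives $|E_A \cup E_B| + |E_A \cap E_B| = |E_A| + |E_B|$, and applying it to $V_A, V_B$ gives $|V_A \cup V_B| + |V_A \cap V_B| = |V_A| + |V_B|$. So the plan is to substitute the hypotheses $|E_A| = 2|V_A| - l_A$ and $|E_B| = 2|V_B| - l_B$ into the first identity to obtain
\begin{equation*}
|E_A \cup E_B| + |E_A \cap E_B| = 2|V_A| + 2|V_B| - l_A - l_B,
\end{equation*}
then rewrite $2|V_A| + 2|V_B| = 2|V_A \cup V_B| + 2|V_A \cap V_B|$ using the second identity to recover the desired right-hand side.

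There is no real obstacle here: the lemma is purely a counting identity, and the only mild subtlety is that edges are distinguished as elements of $E$ (so parallel edges between the same pair of vertices are counted separately), which is exactly how inclusion–exclusion for the edge set is being used. Since the hypotheses make no claim about sparsity of $G_A \cap G_B$ or $G_A \cup G_B$ individually, no case analysis on whether these subgraphs are tight is required; the identity holds as a bookkeeping statement. Thus the proof should take only a few lines.
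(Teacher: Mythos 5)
Your proof is correct and is essentially identical to the paper's: both apply inclusion--exclusion to the edge sets, substitute the two hypotheses, and then apply inclusion--exclusion to the vertex sets to rewrite $2|V_A|+2|V_B|$. There is no meaningful difference in approach.
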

    \begin{proof}
        Calculate
        \begin{align*}
			|E_A\cup E_B|+|E_A\cap E_B|&=|E_A|+|E_B|\\
			&=(2|V_A|-l_A)+(2|V_B|-l_B)\\
			&=2|V_A|+2|V_B|-l_A-l_B\\
			&=2|V_A\cup V_B|+2|V_A\cap V_B|-l_A-l_B.
		\end{align*}
        Hence, the lemma holds.
    \end{proof}
    Here is another result that will be useful throughout this section.
    \begin{lemma}\label{ConnectedTight}\cite[Theorem 5]{Pebble}
		Let $k,l>0$ be integers. Any $(k,l)$-tight multigraph with at least one edge is connected.
	\end{lemma}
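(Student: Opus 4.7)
The plan is to argue by contradiction. Suppose $G=(V,E)$ is $(k,l)$-tight with $|E|\geq 1$ but disconnected, with connected components $G_1=(V_1,E_1),\ldots,G_r=(V_r,E_r)$, $r\geq 2$. I would partition the indices into $S=\{i:|E_i|\geq 1\}$ and $S^c$; since a connected component on two or more vertices must carry at least one edge, $S^c$ indexes the components consisting of a single isolated vertex, and each such component contributes $|V_i|\geq 1$ but $|E_i|=0$.

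The main step is to apply the sparsity condition to each $G_i$ with $i\in S$ (each is a legitimate non-empty subgraph) to obtain $|E_i|\leq k|V_i|-l$, and then sum. Using $|V|=\sum_i|V_i|$ and $|E|=\sum_{i\in S}|E_i|$ together with the tight equality $|E|=k|V|-l$ rearranges to
\[
k\sum_{i\in S^c}|V_i|\;\leq\;l\bigl(1-|S|\bigr).
\]

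A short case analysis on $|S|$ finishes. If $|S|\geq 2$ the right-hand side is strictly negative (using $l>0$) while the left-hand side is non-negative, a contradiction. If $|S|=1$, the inequality combined with $k>0$ forces $\sum_{i\in S^c}|V_i|=0$, so $S^c=\emptyset$ and $r=1$, contradicting $r\geq 2$. If $|S|=0$ then $|E|=\sum_{i\in S}|E_i|=0$, contradicting $|E|\geq 1$.

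I do not anticipate any genuine obstacle; the argument is essentially an inclusion-exclusion-style sum, a special case of the general strategy also used in Lemma~\ref{SubraphInclusion}. The only subtle point worth flagging is that both hypotheses $k>0$ and $l>0$ are genuinely used: without $l>0$ the displayed right-hand side can never be made strictly negative, and indeed for $l=0$ the lemma fails, since a disjoint union of two $(k,0)$-tight multigraphs is again $(k,0)$-tight.
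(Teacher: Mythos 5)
Your proof is correct. Note that the paper does not actually prove this lemma---it is stated as a citation to \cite[Theorem 5]{Pebble}---so there is no in-paper argument to compare against; your component-summing argument is a clean, self-contained proof of the cited fact, and your remark that $l>0$ is genuinely needed (with the $(k,0)$ counterexample of a disjoint union) correctly identifies the sharp hypothesis.
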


    \subsection{Straightforward Reductions}\label{SubsubsectStraightforward}
	
	We begin the investigation of reductions by considering the case where the gain graph has a vertex of degree $2$.
    \begin{proposition}\label{Degree2}
		Suppose that $(G,m)$ is a $\Z^2\rtimes\mathcal{C}_s$-tight gain graph that has a vertex $v_0$ of degree $2$. Form the gain graph $G-v_0$ by a gained $0$-reduction on $G$ at $v_0$. Then $G-v_0$ is also $\Z^2\rtimes\mathcal{C}_s$-tight.
	\end{proposition}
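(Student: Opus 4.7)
The plan is to verify each of the three defining conditions of $\Z^2\rtimes\mathcal{C}_s$-tightness (Definition \ref{DefReflective}) for $G-v_0$ directly. No clever argument is needed for this base case of the reduction analysis.

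First I would check the $(2,1)$-tight count. Since $v_0$ has degree 2 and both of its incident edges are non-loops (which is the setting in which a gained $0$-reduction is defined), deleting $v_0$ decreases $|V|$ by $1$ and $|E|$ by $2$. The equality $|E(G)|=2|V(G)|-1$ therefore gives $|E(G-v_0)|=2|V(G-v_0)|-1$, as required.

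For the three sparsity conditions on $G-v_0$, I would rely on a single observation: every subgraph $H$ of $G-v_0$ is already a subgraph of $G$, and the gain space $\langle (H,m|_H)\rangle_v$ is computed using only closed walks lying within $H$. Hence "purely periodic" and "balanced" are intrinsic properties of $H$, independent of the ambient graph, so any purely periodic (respectively balanced) subgraph of $G-v_0$ is a purely periodic (respectively balanced) subgraph of $G$. The bounds $|E(H)|\leq 2|V(H)|-2$ for purely periodic $H$, $|E(H)|\leq 2|V(H)|-3$ for balanced $H$, and $|E(H)|\leq 2|V(H)|-1$ for any $H$ with $|E(H)|\geq 1$, therefore all transfer verbatim from the tightness of $G$.

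There is essentially no obstacle here, since no new subgraphs are created when a vertex is deleted. The genuine difficulties in the induction proof of Theorem \ref{Induction} will appear at degree-$3$ vertices, where the candidate $1$-reduction must be selected carefully to avoid producing a subgraph that violates the purely periodic or balanced sparsity bound, and where Lemma \ref{SubraphInclusion} becomes the main combinatorial tool.
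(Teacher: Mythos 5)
Your proof is correct and follows essentially the same approach as the paper: verify the global $(2,1)$-count directly, and note that every subgraph of $G-v_0$ is a subgraph of $G$, so the purely periodic and balanced sparsity bounds transfer immediately. The extra remarks you include (that the gains on subgraph walks are intrinsic, and that a gained $0$-reduction never removes a loop) are implicit in the paper's one-line argument but do not change the substance.
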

	\begin{proof}
		Removing $v_0$ from $G$ removes $1$ vertex and $2$ edges, so $G-v_0$ satisfies the overall $(2,1)$-edge-count. Since $G-v_0$ is a subgraph of $G$, every subgraph of $G-v_0$ is also a subgraph of $G$ and therefore satisfies the other conditions of $\Z^2\rtimes\mathcal{C}_s$-tightness. Hence, $G-v_0$ is $\Z^2\rtimes\mathcal{C}_s$-tight.
	\end{proof}
	Now consider a vertex of degree $3$, where one of the incident edges is a loop. 
    The same argument that was used to prove Proposition \ref{Degree2} can be used to prove that a loop-$1$-reduction is always admissible here.
    \begin{proposition}\label{Degree3Loop}
		Suppose that $(G,m)$ is a $\Z^2\rtimes\mathcal{C}_s$-tight gain graph that has a vertex $v_0$ of degree $3$, which is incident to a loop. Form the gain graph $G-v_0$ by a gained loop-$1$-reduction on $G$ at $v_0$. Then $G-v_0$ is also $\Z^2\rtimes\mathcal{C}_s$-tight.
	\end{proposition}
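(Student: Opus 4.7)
The plan is to mirror the proof of Proposition~\ref{Degree2}, since the structural situation is nearly identical. Because $v_0$ has degree~$3$ and is incident to a loop, the loop contributes $2$ to its degree, so $v_0$ has exactly one non-loop incident edge. Performing the loop-$1$-reduction therefore deletes $1$ vertex and $2$ edges, and from $|E|=2|V|-1$ it follows immediately that $|E(G-v_0)|=2|V(G-v_0)|-1$, preserving the global $(2,1)$-tight count.

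For the subgraph conditions, I would observe that every subgraph of $G-v_0$ is already a subgraph of $G$, and therefore inherits $(2,1)$-sparsity, the purely periodic $(2,2)$-sparsity, and the balanced $(2,3)$-sparsity directly from the tightness of $(G,m)$. Thus $G-v_0$ satisfies every condition of Definition~\ref{DefReflective}, as soon as we know the loop-$1$-reduction is actually admissible.

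The only non-bookkeeping point, and thus the main thing to verify, is that the inverse of the reduction really is a gained loop-$1$-extension, which by definition requires the gain $m(l)$ of the loop at $v_0$ to have a non-trivial $\mathcal{C}_s$-component. I would establish this by contradiction: if $m(l)$ were a pure translation, then the subgraph consisting of $v_0$ together with its loop would be purely periodic with $|E'|=1$ and $|V'|=1$, contradicting the purely periodic $(2,2)$-sparsity requirement $|E'|\le 2|V'|-2 = 0$. Hence $m(l)$ has a non-trivial $\mathcal{C}_s$-component, the loop-$1$-reduction is admissible, and $G-v_0$ is $\Z^2\rtimes\mathcal{C}_s$-tight.
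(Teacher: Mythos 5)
Your proposal is correct and follows essentially the same route the paper intends: the paper's proof of Proposition~\ref{Degree3Loop} is omitted and simply refers to the argument of Proposition~\ref{Degree2} (counting the removed edges and vertices, then observing that subgraphs of $G-v_0$ are subgraphs of $G$). You reproduce that argument accurately, and in addition you explicitly verify that the reduction is indeed a loop-$1$-reduction, i.e.\ that the loop gain $m(l)$ has a non-trivial $\mathcal{C}_s$-component; this admissibility check, which you derive cleanly from the purely periodic condition applied to the subgraph on $\{v_0\}$ with the loop, is genuinely needed (a non-identity gain need not have a non-trivial $\mathcal{C}_s$-component) and is left implicit in the paper, so your write-up is slightly more complete than the source.
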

    Next, consider a vertex $v_0$ of degree $3$ that has $3$ incident non-loop edges that all join $v_0$ to the same neighbour.
    \begin{proposition}\label{Degree3Neighbour1}
		Suppose that $(G,m)$ is a $\Z^2\rtimes\mathcal{C}_s$-tight gain graph that has a vertex $v_0$ of degree $3$ with a triple of parallel incident edges. Then it is possible to form a $\Z^2\rtimes\mathcal{C}_s$-tight gain graph by a gained $1$-reduction at $v_0$.
	\end{proposition}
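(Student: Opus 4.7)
The plan is to show that, among the three possible gained $1$-reductions at $v_0$, at least one produces a $\Z^2\rtimes\mathcal{C}_s$-tight gain graph. Label the three parallel edges $e_1,e_2,e_3$ from $v_0$ to $v_1$ with gains $\gamma_i=(t_i,\sigma_i)$, where $t_i\in\Z^2$ and $\sigma_i\in\mathcal{C}_s$. By the gain graph axioms the $\gamma_i$ are pairwise distinct. The three reductions correspond to the three choices of pair $\{i,j\}\subset\{1,2,3\}$; each produces a loop $l$ at $v_1$ with gain $\gamma_i^{-1}\gamma_j$, which is non-identity since $\gamma_i\neq\gamma_j$.

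The key observation is that $\sigma_1,\sigma_2,\sigma_3$ cannot all be equal. A direct computation in the semidirect product, using $\sigma^{-1}=\sigma$ in $\mathcal{C}_s$, gives $\gamma_i^{-1}\gamma_j=(\sigma_i(t_j-t_i),\mathrm{id})$ whenever $\sigma_i=\sigma_j$. Hence if all three $\sigma_i$ coincide, every net gain on a closed walk in the two-vertex subgraph $H^{\ast}=(\{v_0,v_1\},\{e_1,e_2,e_3\})$ of $G$ is a pure translation, so $H^{\ast}$ is purely periodic. But $|E(H^{\ast})|=3>2=2|V(H^{\ast})|-2$, contradicting the purely periodic condition on $G$. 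By pigeonhole I can therefore choose a pair $\{i,j\}$ with $\sigma_i\neq\sigma_j$ and perform the corresponding $1$-reduction, producing a loop $l$ at $v_1$ whose gain has $\mathcal{C}_s$-component $\sigma_i\sigma_j=s\neq\mathrm{id}$.

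It remains to verify that the resulting gain graph $G'$ is $\Z^2\rtimes\mathcal{C}_s$-tight. The overall $(2,1)$-count is immediate from the change of $-1$ vertex and $-2$ edges. For any subgraph $H\subseteq G'$ that does not contain $l$, the three conditions hold because $H$ is already a subgraph of $G$. When $l\in E(H)$, the gain space of $H$ at $v_1$ contains the non-translation element $m(l)$, so $H$ is neither balanced nor purely periodic, and the $(2,3)$- and $(2,2)$-conditions are vacuous. For $(2,1)$-sparsity, set $H'=(H-l)\cup\{v_0,e_1,e_2,e_3\}$; this is a subgraph of $G$ with $|V(H')|=|V(H)|+1$ and $|E(H')|=|E(H)|+2$, so $(2,1)$-sparsity of $G$ forces $|E(H)|\leq 2|V(H)|-1$, as required.

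The main obstacle is the case analysis in the second paragraph: recognising that the combinatorially problematic choice, where the new loop is purely translational and could in principle be attached to a pre-existing $(2,2)$-tight purely periodic subgraph at $v_1$, is automatically ruled out by applying the purely periodic condition of $G$ to the local five-edge subgraph at $v_0$. Once this is in place, the non-trivial $\mathcal{C}_s$-component of the new loop discharges both the balanced and purely periodic conditions for every subgraph containing it, and the $(2,1)$-sparsity check reduces to the standard closing-up inequality.
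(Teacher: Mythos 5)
Your proof is correct and follows essentially the same route as the paper's: both deduce from the purely periodic condition (applied to the two-vertex subgraph on $\{v_0,v_1\}$, which is $(2,1)$-tight and hence cannot be purely periodic) that some pair $e_i,e_j$ yields a loop with non-trivial $\mathcal{C}_s$-component, and then conclude tightness of $G-v_0+e_{ij}$ by noting that every subgraph containing the new loop is automatically unbalanced and not purely periodic, while $(2,1)$-sparsity follows by re-attaching $v_0$. The extra explicit semidirect-product computation $\gamma_i^{-1}\gamma_j=(\sigma_i(t_j-t_i),\sigma_i\sigma_j)$ is a nice clarification but does not change the argument.
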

    \begin{proof}
        Suppose that $v_0$ has a triple of parallel incident edges: $e_1=(v_0,v_1;m(e_1))$, $e_2=(v_0,v_1;m(e_2))$ and $e_3=(v_0,v_1;m(e_3))$ for some $v_1\in V$. A $1$-reduction at $v_0$ would involve removing $v_0$ and adding one of the candidate loop edges: $e_{12}=(v_1,v_1;(m(e_1))^{-1}m(e_2))$, $e_{23}=(v_1,v_1;(m(e_2))^{-1}m(e_3))$ or $e_{31}=(v_1,v_1;(m(e_3))^{-1}m(e_1))$. Hence, the aim is to show that one of $G-v_0+e_{12}$, $G-v_0+e_{23}$ or $G-v_0+e_{31}$ is $\Z^2\rtimes\mathcal{C}_s$-tight.

        Since $\{v_0,v_1\}$ induces a $(2,1)$-tight subgraph of $G$, this subgraph cannot be purely periodic. Thus, among $\{e_1,e_2,e_3\}$, there is a pair $(e_i,e_j)$ such that $(m(e_i))^{-1}m(e_j)$ has a non-trivial $\mathcal{C}_s$-component.

        Note that $G-v_0+e_{ij}$ clearly satisfies the overall $(2,1)$-edge-count. It is easy to see that any subgraph of $G-v_0+e_{ij}$ containing $e_{ij}$ is $(2,1)$-sparse, as performing the corresponding $1$-extension on an overcounted subgraph of $G-v_0+e_{ij}$ would give an overcounted subgraph of $G$. Since $e_{ij}$ is a loop and $m(e_{ij})$ has a non-trivial $\mathcal{C}_s$-component, any subgraph of $G-v_0+e_{ij}$ containing $e_{ij}$ will be unbalanced and not purely periodic. Hence, any subgraph containing $e_{ij}$ satisfies all of the conditions on subgraphs that are required for the gain graph to be $\Z^2\rtimes\mathcal{C}_s$-tight. Any subgraph of $G-v_0+e_{ij}$ that does not contain $e_{ij}$ is itself a subgraph of $G$ and therefore satisfies the required conditions on subgraphs. Hence, $G-v_0+e_{ij}$ is $\Z^2\rtimes\mathcal{C}_s$-tight.
    \end{proof}

    \subsection{1-reductions on Vertices with Two Neighbours}\label{SubsubsectNeighbour2}
	The next consideration is the case where a vertex of degree $3$ has $2$ neighbours.
    Proving that there is an admissible reduction in this case is considerably more involved than in any of the previous cases.
	\begin{proposition}\label{Degree3Neighbour2}
		Suppose that $(G,m)$ is a $\Z^2\rtimes\mathcal{C}_s$-tight gain graph that has a vertex $v_0$ of degree $3$ with $2$ distinct neighbours. Then it is possible to form a $\Z^2\rtimes\mathcal{C}_s$-tight gain graph by a gained $1$-reduction at $v_0$.
	\end{proposition}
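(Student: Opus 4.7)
Since $v_0$ has degree $3$ and two distinct neighbours, label the incident edges as $e_1 = (v_0, v_1; g_1)$ and $e_2 = (v_0, v_1; g_2)$ (parallel to $v_1$), and $e_3 = (v_0, v_2; g_3)$ (to $v_2$). The three candidate gained $1$-reductions at $v_0$ produce, respectively, the replacement edges
\[
e_{12} = (v_1, v_1; g_1^{-1} g_2), \qquad e_{13} = (v_1, v_2; g_1^{-1} g_3), \qquad e_{23} = (v_1, v_2; g_2^{-1} g_3).
\]
The plan is to show that at least one of the gain graphs $G_{*} := G - v_0 + e_{*}$ is $\Z^2\rtimes\mathcal{C}_s$-tight. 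Assume for contradiction that none is. Since each $G_{*}$ has the correct global $(2,1)$-count, each must contain a subgraph $H_{*} + e_{*}$ violating one of the three sparsity conditions. I call $H_{*} \subseteq G - v_0$ a \emph{blocker} for $e_{*}$; it contains the endpoints of $e_{*}$ and is of one of three types: (T1) $(2,1)$-tight; (T2) purely periodic and $(2,2)$-tight with $g_{e_{*}}$ a translation; (T3) balanced and $(2,3)$-tight with $g_{e_{*}}$ matching the unique $H_{*}$-path gain from $v_1$ to $v_2$. The loop $e_{12}$ always unbalances, so $H_{12}$ can only be of type T1 or T2.

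For each blocker I form the \emph{extension} $K_{*}$ in $G$ by adjoining $v_0$ together with the two associated edges from $\{e_1,e_2,e_3\}$. A short count plus a gain check (using the condition on $g_{e_{*}}$ to verify that the new cycle at $v_0$ carries the correct gain type) shows that $K_{*}$ is tight for exactly the same sparsity condition as $H_{*}$. The central argument then applies Lemma~\ref{SubraphInclusion} to the pair $(K_{13}, K_{23})$: since $e_1 \notin K_{23}$ and $e_2 \notin K_{13}$, the vertex $v_0$ has degree exactly one in $K_{13} \cap K_{23}$ (only $e_3$ survives). If at least one of $H_{13}, H_{23}$ is of type T1, the lemma forces $K_{13} \cap K_{23}$ to be tight for a $(2, l)$-condition with $l \in \{1, 2, 3\}$; but any such tight subgraph on $\geq 2$ vertices has minimum degree $\geq 2$ (deleting a degree-one vertex would leave an overcounted subgraph of $G$), a contradiction.

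In the remaining slack cases both $H_{13}$ and $H_{23}$ are of type T2 or T3. Here I apply Lemma~\ref{SubraphInclusion} to $H_{13}$ and $H_{23}$ directly in $G - v_0$. The resulting equality splits into sub-cases. If $H_{13}\cup H_{23}$ is $(2,1)$-tight, then adjoining $v_0, e_1, e_2, e_3$ produces a subgraph of $G$ with $2|V|$ edges, contradicting $(2,1)$-sparsity. Otherwise $H_{13}\cap H_{23}$ is tight for its stronger $(2,l)$-condition, hence connected by Lemma~\ref{ConnectedTight}. In the T2+T2 and T2+T3 sub-cases, $\epsilon$-consistency of the path gain in the connected (balanced or purely periodic) intersection forces $\epsilon_{g_1} = \epsilon_{g_2} = \epsilon_{g_3}$; then $H_{13}\cup H_{23}$ is purely periodic, and the adjoined subgraph with $2|V|-1$ edges violates the purely periodic $(2,2)$-bound.

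The main obstacle is the T3+T3 case. Here the tight-intersection sub-case is in fact impossible: a balanced connected intersection containing $v_1, v_2$ would have a well-defined path gain from $v_1$ to $v_2$ equal to both $g_{13}$ and $g_{23}$, contradicting $g_1 \neq g_2$. Hence Lemma~\ref{SubraphInclusion} leaves either the $(2,1)$-tight union (handled as above) or an intermediate distribution in which $H_{13}\cup H_{23}$ has $2|V|-2$ edges. In this intermediate case, when the two balanced path-$\epsilon$-components $\mu_{13}, \mu_{23}$ coincide, a cross-cycle analysis shows $H_{13}\cup H_{23}$ is purely periodic and the adjoined subgraph violates the $(2,2)$-bound. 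Otherwise $\mu_{13}\neq\mu_{23}$, forcing $\epsilon_{g_1}\neq\epsilon_{g_2}$, so $g_{12}\notin\Z^2$ and $H_{12}$ must be of type T1; I then close the argument by applying Lemma~\ref{SubraphInclusion} to the pair $(K_{12}, K_{13})$, which forces $K_{12}\cap K_{13}$ to be balanced $(2,3)$-tight with $v_0$ of degree exactly one (only $e_1$), delivering the final min-degree contradiction.
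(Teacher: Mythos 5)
Your blind proof takes a genuinely different overall structure from the paper (a single unified contradiction over blocker types $(T1,T2,T3)$, rather than the paper's split into the cases $\epsilon(g_1)\neq\epsilon(g_2)$ with/without a $(2,1)$-blocker for the loop, and $\epsilon(g_1)=\epsilon(g_2)$), and most of it checks out, but the final sub-case contains a genuine gap.

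The problem is the very last step of the T3+T3 analysis. You have reached: $V_{13}\cap V_{23}=\{v_1,v_2\}$, $E_{13}\cap E_{23}=\emptyset$, $\mu_{13}\neq\mu_{23}$, so $\epsilon(g_1)\neq\epsilon(g_2)$ and $H_{12}$ is a $(2,1)$-tight blocker. You then run Lemma~\ref{SubraphInclusion} on $(K_{12},K_{13})$ and conclude $K_{12}\cap K_{13}$ is $(2,3)$-tight with $v_0$ of degree one, and invoke a ``min-degree'' contradiction. But here $v_2\notin V(K_{12})$ — a $(2,1)$-tight $H_{12}$ containing both $v_1$ and $v_2$ would, after adjoining $v_0$ with all three incident edges, immediately violate $(2,1)$-sparsity of $G$ — so the only guaranteed vertices of $K_{12}\cap K_{13}$ are $v_0$ and $v_1$. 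If $V_{12}\cap V_{13}=\{v_1\}$, then $K_{12}\cap K_{13}$ is the graph on two vertices with the single edge $e_1$, which \emph{is} $(2,3)$-tight but has no degree-$2$ vertex; deleting $v_0$ leaves a graph with no edges and no violated count, so no contradiction is obtained. Worse: running the paper's Lemma~\ref{SubraphInclusion} argument on the pair $(H_{13},H_{12})$ (a $(2,3)$-tight graph and a $(2,1)$-tight graph, both missing $v_0$) shows $|V_{12}\cap V_{13}|>1$ is itself impossible — it would force $|E_{12}\cup E_{13}|\geq 2|V_{12}\cup V_{13}|-1$ and adjoining $v_0$ would break $(2,1)$-sparsity. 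So the case you are left with is precisely $V_{12}\cap V_{13}=\{v_1\}$, where your min-degree argument yields nothing. The paper closes this case by forming the three-way union $H_{12}\cup H_{13}\cup H_{23}$: one first shows $H_{12}\cup H_{13}$ has a $(2,2)$-count, then intersects this with $H_{23}$ to obtain a $(2,3)$-tight (hence connected) graph containing $v_1,v_2$; because $E_{12}\cap E_{23}=\emptyset$, the resulting $v_1$--$v_2$ path lies in both balanced blockers $H_{13}$ and $H_{23}$, forcing $g_1^{-1}g_3=g_2^{-1}g_3$, hence $g_1=g_2$, contradicting that $e_1,e_2$ are distinct parallel edges. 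Your argument is missing exactly this three-blocker union step.

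Two smaller points: (i) in the T2+T2 and T2+T3 sub-cases the common path only forces $\epsilon_{g_1}=\epsilon_{g_2}$, not $\epsilon_{g_1}=\epsilon_{g_2}=\epsilon_{g_3}$; fortunately $\epsilon_{g_1}=\epsilon_{g_2}$ is all you actually use, so this is harmless. (ii) The min-degree observation in your first argument, applied to $K_{13}\cap K_{23}$, is sound only because $v_1,v_2\in V_{13}\cap V_{23}$ guarantees $|V(K_{13}\cap K_{23})|\geq 3$; it is worth stating explicitly, since — as the gap above shows — the claim ``a $(2,l)$-tight graph has min degree $\geq 2$'' is false on two vertices.
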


    Suppose that $v_0\in V$ is a vertex of degree $3$ with incident edges $e_1=(v_0,v_1;m(e_1))$, $e_2=(v_0,v_2;m(e_2))$ and $e_3=(v_0,v_2;m(e_3))$, for some distinct $v_1,v_2\in V$. After deleting $v_0$, the possible options for edges to add for a $1$-reduction are $e_{12}=(v_1,v_2;(m(e_1))^{-1}m(e_2))$, $e_{13}=(v_1,v_2;(m(e_1))^{-1}m(e_3))$ or $e_{23}=(v_2,v_2;(m(e_2))^{-1}m(e_3))$. To prove Proposition \ref{Degree3Neighbour2}, the aim is to show that one of $G-v_0+e_{12}$, $G-v_0+e_{13}$ or $G-v_0+e_{23}$ is $\Z^2\rtimes\mathcal{C}_s$-tight. Figure \ref{Fig2Neighbour} illustrates the neighbourhood of $v_0$, with the candidate edges $e_{12}$, $e_{13}$ and $e_{23}$ represented by dashed lines.

    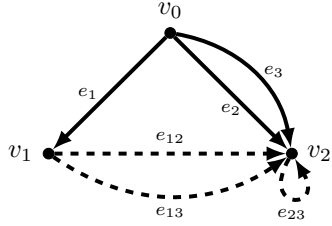
\begin{figure}[H]
        \begin{center}
			\begin{tikzpicture}[scale=1.6]
				\node[vertex,label=above:$v_0$] (v0) at (1,1) {};
				\node[vertex,label=left:$v_1$] (v1) at (0,0) {};
				\node[vertex,label=right:$v_2$] (v2) at (2,0) {};
				
				\draw[dedge] (v0)edge node[left,labelsty]{$e_1$}(v1);
				\draw[dedge] (v0)edge node[below,labelsty]{$e_2$}(v2);
				\draw[dedge] (v0)edge [bend left=35] node[right,labelsty]{$e_3$}(v2);
				
				\draw[dedge,dashed] (v1) to node[above,labelsty]{$e_{12}$}(v2);
				\draw[dedge,dashed] (v1) [bend right=35] to node[below,labelsty]{$e_{13}$}(v2);
				\draw[dedge,dashed] (v2)to [in=-60,out=-120,loop] node[below,labelsty]{$e_{23}$}(v2);
			\end{tikzpicture}
		\end{center}
        \caption{A vertex $v_0$ of degree $3$ with two neighbours.}
        \label{Fig2Neighbour}
    \end{figure}

    The proof of Proposition \ref{Degree3Neighbour2}  is split into lemmas covering different cases. The general approach used is similar to that seen for $\Z^2$-gain graphs in the corresponding part of the inductive proof of the Periodic Laman Theorem \cite[Proposition 5.3]{Inductive}, although the version for $\Z^2\rtimes\mathcal{C}_s$-gain graphs is more complicated.
    
    Note that if $e_2$ and $e_3$ have the same $\mathcal{C}_s$-gain component, then the loop $e_{23}$ in $G-v_0+e_{23}$ will have a purely periodic gain, violating the $(2,2)$-sparsity count for the purely periodic condition. So we will treat the cases where $e_2$ and $e_3$ have the same and different $\mathcal{C}_s$-components separately.  
    
    We begin with the case where the parallel edges $e_2$ and $e_3$ have different $\mathcal{C}_s$-gain components and $G-v_0+e_{23}$ is $(2,1)$-tight.
    \begin{lemma}\label{Degree3Neighbour2Loop}
		Suppose that $(G,m)$ is a $\Z^2\rtimes\mathcal{C}_s$-tight gain graph that has a vertex $v_0$ of degree $3$, with incident edges $e_1=(v_0,v_1;m(e_1))$, $e_2=(v_0,v_2;m(e_2))$ and $e_3=(v_0,v_2;m(e_3))$ such that $m(e_2)$ and $m(e_3)$ have different $\mathcal{C}_s$-components. If $G-v_0+e_{23}$ is $(2,1)$-tight, then it is $\Z^2\rtimes\mathcal{C}_s$-tight.
	\end{lemma}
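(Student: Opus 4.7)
The plan is to verify each of the three conditions of Definition~\ref{DefReflective} in turn for $H := G - v_0 + e_{23}$. Condition (1), namely $(2,1)$-tightness, is the hypothesis, so only the purely periodic and balanced conditions remain to be checked.

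The key observation is that the new loop has gain
\[
m(e_{23}) = (m(e_2))^{-1} m(e_3),
\]
and the hypothesis that $m(e_2)$ and $m(e_3)$ have different $\mathcal{C}_s$-components forces $m(e_{23})$ to have a non-trivial $\mathcal{C}_s$-component. In particular, $m(e_{23})$ is neither the identity nor a pure translation.

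I would then argue as follows. Let $H'$ be any subgraph of $H$ that contains the loop $e_{23}$. Traversing this loop once is a closed walk at $v_2$ of net gain $m(e_{23})$, so the gain space $\langle (H',m)\rangle_{v_2}$ contains $m(e_{23})$. Since this element has a non-trivial $\mathcal{C}_s$-component, $H'$ is neither balanced (the gain space is non-trivial) nor purely periodic (the gain space contains a reflection). Hence $H'$ is not required to satisfy either the $(2,3)$- or the $(2,2)$-sparsity bound. On the other hand, any subgraph of $H$ that omits $e_{23}$ is a subgraph of $G - v_0$, hence of $G$, and so inherits the required sparsity bounds from the $\Z^2\rtimes\mathcal{C}_s$-tightness of $G$. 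This establishes conditions (2) and (3).

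There is no real obstacle in this lemma; the argument is a bookkeeping check that exploits the fact that the hypothesis on $\mathcal{C}_s$-components makes $e_{23}$ an \emph{unbalanced, non-purely-periodic} loop. This is precisely the feature that lets subgraphs containing $e_{23}$ bypass conditions (2) and (3), while subgraphs avoiding it trivially inherit them from $G$.
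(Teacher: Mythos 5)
Your argument is correct and is essentially the paper's own proof: the loop $e_{23}$ carries a gain with non-trivial $\mathcal{C}_s$-component, so any subgraph containing it is automatically unbalanced and not purely periodic, while subgraphs omitting $e_{23}$ are subgraphs of $G$ and inherit the sparsity bounds. Nothing is missing.
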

    \begin{proof}
		Since $G-v_0+e_{23}$ is $(2,1)$-tight, it remains only to check that it satisfies the purely periodic condition and the balanced condition. Since $m(e_2)$ and $m(e_3)$ have different $\mathcal{C}_s$-components, $(m(e_2))^{-1}m(e_3)$ has a non-trivial $\mathcal{C}_s$-component. Since $e_{23}$ is a loop, it can be seen that any subgraph of $G-v_0+e_{23}$ that contains $e_{23}$ will be unbalanced and not purely periodic. Any subgraph of $G-v_0+e_{23}$ that does not contain $e_{23}$ is itself a subgraph of $G$ and therefore satisfies the required conditions on subgraphs. Hence, $G-v_0+e_{23}$ is $\Z^2\rtimes\mathcal{C}_s$-tight.
	\end{proof}
    For other cases, the aim is to perform a reduction to either $G-v_0+e_{12}$ or $G-v_0+e_{13}$. Any subgraph of either of these that does not contain the new edge is also a subgraph of $G$ and therefore satisfies all of the conditions required on subgraphs for $\Z^2\rtimes\mathcal{C}_s$-tightness. It is therefore only necessary to consider subgraphs that contain the new edge. The cases where $G-v_0+e_{12}$ or $G-v_0+e_{13}$ fails each condition can be characterised in terms of subgraphs of $G$, known as blockers. These are described in the following definition.
    \begin{definition}\label{Blockers3}
        Let $(G,m)$ be a $\Z^2\rtimes\mathcal{C}_s$-tight gain graph. Let $v_0\in V$ be a vertex of degree $3$, with two of its incident edges being $e_i=(v_0,v_i;m(e_i))$ and $e_j=(v_0,v_j;m(e_j))$, for some distinct $v_i,v_j\in V$. Let $e_{ij}=(v_i,v_j;(m(e_i))^{-1}m(e_j))$.
		
		A \emph{$(2,1)$-tight blocker} for the $1$-reduction to $G-v_0+e_{ij}$ is a $(2,1)$-tight subgraph $G_{ij}=(V_{ij},E_{ij})\subset G$ such that $v_i,v_j\in V_{ij}$ and $v_0\notin V_{ij}$.

        A \emph{purely periodic blocker} for the $1$-reduction to $G-v_0+e_{ij}$ is a $(2,2)$-tight purely periodic  subgraph $G_{ij}=(V_{ij},E_{ij})\subset G$ such that $v_i,v_j\in V_{ij}$, $v_0\notin V_{ij}$ and every path in $G_{ij}$ from $v_i$ to $v_j$ has a net gain with the same $\mathcal{C}_s$-component as $(m(e_i))^{-1}m(e_j)$.

        A \emph{balanced blocker} for the $1$-reduction to $G-v_0+e_{ij}$ is a $(2,3)$-tight balanced subgraph $G_{ij}=(V_{ij},E_{ij})\subset G$ such that $v_i,v_j\in V_{ij}$, $v_0\notin V_{ij}$ and every path in $G_{ij}$ from $v_i$ to $v_j$ has net gain $(m(e_i))^{-1}m(e_j)$.
    \end{definition}

    Each type of blocker corresponds to a failure of one of the conditions. A $1$-reduction that adds a non-loop edge gives a graph that fails the $(2,1)$-tight condition if and only if it has a $(2,1)$-tight blocker. It gives a graph that fails the purely periodic condition if and only if it has a purely periodic blocker. It gives a graph that fails the balanced condition if and only if it has a balanced blocker.

    Now consider the case where $e_2$ and $e_3$ have different $\mathcal{C}_s$-gain components but $G-v_0+e_{23}$ is not $(2,1)$-tight. 
	\begin{lemma}\label{Degree3Neighbour2OverLoop}
		Suppose that $(G,m)$ is a $\Z^2\rtimes\mathcal{C}_s$-tight gain graph that has a vertex $v_0$ of degree $3$, with incident edges $e_1=(v_0,v_1;m(e_1))$, $e_2=(v_0,v_2;m(e_2))$ and $e_3=(v_0,v_2;m(e_3))$ such that $m(e_2)$ and $m(e_3)$ have different $\mathcal{C}_s$-components, but $G-v_0+e_{23}$ is not $(2,1)$-tight. Then one of $G-v_0+e_{12}$ or $G-v_0+e_{13}$ is $\Z^2\rtimes\mathcal{C}_s$-tight.
	\end{lemma}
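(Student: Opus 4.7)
The plan is to argue by contradiction: assume both $G-v_0+e_{12}$ and $G-v_0+e_{13}$ fail to be $\Z^2\rtimes\mathcal{C}_s$-tight, combine this with the hypothesis that $G-v_0+e_{23}$ is not $(2,1)$-tight, and derive a violation of the $\Z^2\rtimes\mathcal{C}_s$-tightness of $G$. Each of the three failures produces a blocker in the sense of Definition~\ref{Blockers3}. First, since $G-v_0+e_{23}$ has the correct overall $(2,1)$ edge count, its failure is witnessed by a $(2,1)$-tight blocker $G_1=(V_1,E_1)$ with $v_2\in V_1$ and $v_0\notin V_1$; applying the $(2,1)$-sparsity of $G$ to $G_1\cup\{v_0,e_1,e_2,e_3\}$ then forces $v_1\notin V_1$. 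Under the contradiction hypothesis, for each $i\in\{2,3\}$ there is a blocker $G_{1i}=(V_{1i},E_{1i})$ with $v_1,v_2\in V_{1i}$, $v_0\notin V_{1i}$, and $|E_{1i}|=2|V_{1i}|-l_{1i}$, where $l_{1i}\in\{1,2,3\}$ according to whether the blocker is $(2,1)$-tight, $(2,2)$-tight purely periodic, or $(2,3)$-tight balanced.

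The main counting step applies the $(2,1)$-sparsity of $G$ to the subgraph $G_{12}\cup G_{13}\cup\{v_0,e_1,e_2,e_3\}$, which gives $|E_{12}\cup E_{13}|\leq 2|V_{12}\cup V_{13}|-2$. Combining this with Lemma~\ref{SubraphInclusion} yields the lower bound
\begin{align*}
|E_{12}\cap E_{13}|\ \geq\ 2|V_{12}\cap V_{13}|+2-(l_{12}+l_{13}).
\end{align*}
Against this I would place an upper bound coming from the $\Z^2\rtimes\mathcal{C}_s$-tightness of $G$: the intersection $G_{12}\cap G_{13}$ inherits the gain-space restriction of each blocker (closed walks in the intersection are closed walks in both), so it is purely periodic whenever either blocker is, and balanced whenever either is, and must therefore satisfy the corresponding sparsity bound from Definition~\ref{DefReflective}. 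A direct comparison rules out every pairing $(l_{12},l_{13})$ except the residual cases $(2,2)$, $(2,3)$, $(3,2)$, $(3,3)$.

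To eliminate the residual cases I would use the path-gain clauses in Definition~\ref{Blockers3} together with the key observation that, because $m(e_2)$ and $m(e_3)$ have different $\mathcal{C}_s$-components, so do the gains $(m(e_1))^{-1}m(e_2)$ and $(m(e_1))^{-1}m(e_3)$. Any $v_1$-$v_2$ path in $G_{12}\cap G_{13}$ would simultaneously satisfy two incompatible net-gain conditions (full equality of the net gain in the balanced case, or equal $\mathcal{C}_s$-component in the purely periodic case), so no such path exists, and $v_1, v_2$ must lie in distinct connected components of $G_{12}\cap G_{13}$. Applying the relevant sparsity count to each connected component separately then sharpens the upper bound on $|E_{12}\cap E_{13}|$ and contradicts the lower bound in every residual case; in configurations where this component-wise refinement is not sharp enough by itself, one can form $G_1\cup G_{1i}$ and apply Lemma~\ref{SubraphInclusion} once more, exploiting $v_1\notin V_1$ for additional slack. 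The main obstacle lies in the combinatorial bookkeeping of these residual cases: correctly handling the possibility that $v_1$ or $v_2$ is an isolated vertex of $G_{12}\cap G_{13}$, tracking the inheritance of purely periodic and balanced structure under intersection, and exploiting the $\mathcal{C}_s$-component obstruction. This last point is the specifically orientation-reversing feature that distinguishes the argument from its $\Z^2$-gain analogue in \cite{Inductive}.
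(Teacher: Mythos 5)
Your overall strategy is sound and, for the bulk of the case analysis, genuinely different from the paper's. Where you form $G_{12}\cup G_{13}$ directly and use the fact that $G_{12}\cap G_{13}$ inherits the balanced/purely-periodic restriction from each blocker, the paper instead never unions the two $e_{1i}$-blockers with each other: it pairs each with the $(2,1)$-tight blocker $G_{23}$ (your $G_1$), which yields a straight overcount of $G$ once $v_0$ is reattached, so the purely periodic case is dispatched in one line and the balanced case reduces quickly to a $|V_{12}\cap V_{23}|=|V_{13}\cap V_{23}|=1$ subcase. Your route buys a unified treatment of the $(2,1)$/$(2,2)$/$(2,3)$ blocker types via the parameters $l_{1i}$, and your $\mathcal{C}_s$-component obstruction (that $(m(e_1))^{-1}m(e_2)$ and $(m(e_1))^{-1}m(e_3)$ have different $\mathcal{C}_s$-components, so $G_{12}\cap G_{13}$ contains no $v_1$--$v_2$ path) is exactly the orientation-reversing fact the paper exploits, just used in the contrapositive (the paper shows the intersection \emph{is} connected and derives $m(e_2)=m(e_3)$, a contradiction with the gain-graph axioms). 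However, be aware that the component-wise sparsity refinement, while sufficient for $(l_{12},l_{13})\in\{(2,2),(2,3),(3,2)\}$, does \emph{not} close the case $(3,3)$ when $V_{12}\cap V_{13}=\{v_1,v_2\}$ and $E_{12}\cap E_{13}=\emptyset$: there both the lower bound $|E_{12}\cap E_{13}|\geq 2|V_{12}\cap V_{13}|-4$ and the component-wise upper bound evaluate to $0$, so no contradiction results. You acknowledge this and propose falling back to $G_1\cup G_{1i}$; that fallback does work, but at that point you have essentially reconstructed the paper's calculation, so the net saving is smaller than it first appears. Writing this up rigorously would require making the fallback explicit for that residual configuration (showing $|V_1\cap V_{12}|=|V_1\cap V_{13}|=1$, then forming $(G_1\cup G_{12})\cup G_{13}$ and comparing against $|E^*\cap E_{13}|=0$), which is precisely the paper's second step.
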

    \begin{proof}
        Since $G-v_0+e_{23}$ is not $(2,1)$-tight, there exists a $(2,1)$-tight subgraph $G_{23}=(V_{23},E_{23})\subset G$ with $v_2\in V_{23}$ and $v_0\notin V_{23}$. Note that $v_1\notin V_{23}$, as otherwise adding $v_0$ with its incident edges to $G_{23}$ would break the $(2,1)$-sparsity of $G$.

         If the reduction to either $G-v_0+e_{12}$ or $G-v_0+e_{13}$ has a $(2,1)$-tight blocker, then adding $v_0$ with its incident edges to this blocker gives an overcounted subgraph of $G$, contradicting the $(2,1)$-sparsity of $G$. Hence, both of these graphs are $(2,1)$-tight.

         Suppose that $G-v_0+e_{12}$ fails the purely periodic condition, so it has a purely periodic blocker $G_{12}$. By Lemma \ref{SubraphInclusion},
		\begin{align}
			|E_{12}\cup E_{23}|+|E_{12}\cap E_{23}|
			=2|V_{12}\cup V_{23}|+2|V_{12}\cap V_{23}|-3.\label{EqDegree3Neighbour2OverLoop2}
		\end{align}
        Since $G_{12}$ is $(2,2)$-tight, $|E_{12}\cap E_{23}|\leq2|V_{12}\cap V_{23}|-2$. Combining this with Equation~\eqref{EqDegree3Neighbour2OverLoop2}, it follows that
		% \begin{align*}
			$|E_{12}\cup E_{23}|\geq2|V_{12}\cup V_{23}|-1$.
		% \end{align*}
		Adding $v_0$ with its incident edges to $G_{12}\cup G_{23}$ breaks the $(2,1)$-sparsity of $G$, giving a contradiction. Hence, $G-v_0+e_{12}$ satisfies the purely periodic condition. Exactly the same method shows that $G-v_0+e_{13}$ also satisfies the purely periodic condition.

        Suppose that $G-v_0+e_{12}$ and $G-v_0+e_{13}$ both fail the balanced condition, so they have balanced blockers $G_{12}$ and $G_{13}$ respectively. Consider $G_{12}\cup G_{23}$. By Lemma \ref{SubraphInclusion},
		\begin{align}
			|E_{12}\cup E_{23}|+|E_{12}\cap E_{23}|=2|V_{12}\cup V_{23}|+2|V_{12}\cap V_{23}|-4.\label{EqDegree3Neighbour2OverLoop}
		\end{align}
        Suppose that $|V_{12}\cap V_{23}|>1$. Since $G_{12}$ is $(2,3)$-tight, $|E_{12}\cap E_{23}|\leq2|V_{12}\cap V_{23}|-3$. Combining this with Equation~\eqref{EqDegree3Neighbour2OverLoop}, it follows that
		% \begin{align*}
			$|E_{12}\cup E_{23}|\geq2|V_{12}\cup V_{23}|-1$.
		% \end{align*}
		Adding $v_0$ and its incident edges to $G_{12}\cup G_{23}$ forms a subgraph that breaks the $(2,1)$-sparsity of $G$, a contradiction. The same method gives a contradiction for the case where $|V_{13}\cap V_{23}|>1$.

        This leaves the case where $|V_{12}\cap V_{23}|=|V_{13}\cap V_{23}|=1$.
        In this case, note that $|E_{12}\cap E_{23}|=|E_{13}\cap E_{23}|=0$. 
        Equation \eqref{EqDegree3Neighbour2OverLoop} then shows that $|E_{12}\cup E_{23}|=2|V_{12}\cup V_{23}|-2$. Let $G^*=G_{12}\cup G_{23}=(V^*,E^*)$ and consider $G^*\cup G_{13}$. By Lemma \ref{SubraphInclusion},
		\begin{align*}
			|E^*\cup E_{13}|+|E^*\cap E_{13}|=2|V^*\cup V_{13}|+2|V^*\cap V_{13}|-5.
		\end{align*}
		Note that $G_{13}$ is $(2,3)$-tight and that $|V^*\cap V_{13}|>1$, as $v_1,v_2\in V^*\cap V_{13}$. Hence,
		% \begin{align*}
			$|E^*\cap E_{13}|\leq2|V^*\cap V_{13}|-3$ and thus $|E^*\cup E_{13}|\geq2|V^*\cup V_{13}|-2$. Since $v_1,v_2\in V^*\cup V_{13}$ we also have $|E^*\cup E_{13}|\leq 2|V^*\cup V_{13}|-2$, as otherwise adding $v_0$ and its three incident edges would violate $(2,1)$-sparsity. Thus, $|E^*\cup E_{13}|=2|V^*\cup V_{13}|-2$.
		% \end{align*}
        This shows that $G^*\cap G_{13}$ is $(2,3)$-tight and hence connected by Lemma \ref{ConnectedTight}. It therefore contains a path from $v_1$ to $v_2$ that is in both $G_{12}$ and $G_{13}$ (the path does not use any edges from $G_{23}$, as $|E_{13}\cap E_{23}|=0$). By the definition of a balanced blocker, such a path is required to have gain equal to both $(m(e_1))^{-1}m(e_2)$ and $(m(e_1))^{-1}m(e_3)$, so clearly $(m(e_1))^{-1}m(e_2)=(m(e_1))^{-1}m(e_3)$. However, this implies that $m(e_2)=m(e_3)$, which is a contradiction to the requirement that parallel edges must have different gains.        
        Every case leads to a contradiction, so one of $G-v_0+e_{12}$ or $G-v_0+e_{13}$ satisfies the balanced condition. Hence, one of $G-v_0+e_{12}$ or $G-v_0+e_{13}$ must be $\Z^2\rtimes\mathcal{C}_s$-tight.
    \end{proof}

    Lemmas \ref{Degree3Neighbour2Loop} and \ref{Degree3Neighbour2OverLoop} cover the case where $m(e_2)$ and $m(e_3)$ have different $\mathcal{C}_s$-components, leaving the case where they have the same $\mathcal{C}_s$-component. Recall that in this case it is not feasible to perform the $1$-reduction to $G-v_0+e_{23}$, as the loop $e_{23}$ would violate the purely periodic condition. Consequently, we no longer have the $(2,1)$-tight blocker $G_{23}$ available to create subgraphs violating the sparsity counts as in the proof of Lemma~\ref{Degree3Neighbour2OverLoop}.   
    We will show that we can still perform a $1$-reduction to either $G-v_0+e_{12}$ or $G-v_0+e_{13}$.
    This will be covered by Lemma \ref{Degree3Neighbour2MatchLoop}, for which the following result regarding connectivity will be needed.
    \begin{lemma}\label{ConnectedSmall}
		Let $G=(V,E)$ be a $(2,3)$-sparse multigraph with $|E|=2|V|-4$. Then either $G$ is connected or $G$ consists only of $2$ vertices and no edges.
	\end{lemma}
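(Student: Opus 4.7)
The plan is to run a short counting argument on the connected components, showing that the edge deficit of $4$ is too tight to be split across more than one component unless the components are both trivial (isolated vertices).

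First I would set up notation: suppose $G$ is disconnected with connected components $G_1,\dots,G_c$ where $|V(G_i)|=n_i$ and $|E(G_i)|=m_i$. Separate these into $k_0$ components that are isolated vertices (so $n_i=1$, $m_i=0$) and $k_1$ components that contain at least one edge. Since $G$ is $(2,3)$-sparse, any component with at least one edge is itself a subgraph with at least one edge and hence satisfies $m_i\leq 2n_i-3$ (note this forces $n_i\geq 2$ for such a component, since a single vertex with a loop would give $1\leq -1$, a contradiction).

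Next I would sum these bounds to get
\begin{equation*}
    2|V|-4 = |E| = \sum_{i:\,m_i\geq 1} m_i \leq \sum_{i:\,m_i\geq 1}(2n_i-3) = 2(|V|-k_0) - 3k_1,
\end{equation*}
which rearranges to $2k_0+3k_1\leq 4$. Since $G$ is disconnected we have $k_0+k_1\geq 2$, and the only non-negative integer solution of $2k_0+3k_1\leq 4$ with $k_0+k_1\geq 2$ is $k_0=2$, $k_1=0$. This forces $G$ to consist of exactly two isolated vertices with no edges, which is consistent with $|E|=2|V|-4=0$.

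There is no real obstacle here; the only subtlety worth flagging is the implicit claim that a $(2,3)$-sparse component with at least one edge must have at least two vertices (ruling out a one-vertex loop component), which I would mention explicitly so that the bound $m_i\leq 2n_i-3$ is applied legitimately. The conclusion then follows directly by contrapositive: if $G$ is not the two-isolated-vertex graph, it must be connected.
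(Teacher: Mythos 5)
Your proof is correct and uses essentially the same counting argument as the paper: the paper picks one edge-containing component $G_A$ (bounded by $2|V_A|-3$) and lumps the rest into $G_B$ (bounded by $2|V_B|-2$), whereas you sum the $(2,3)$-bounds over all edge-containing components and account for isolated vertices separately, but the arithmetic is the same. One small note: the observation that an edge-containing component must have at least two vertices is a nice sanity check but is never actually used in your bound, so it need not be flagged as a subtlety.
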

    \begin{proof}
		Suppose that $G$ is disconnected and has a connected component $G_A=(V_A,E_A)$, where $|E_A|\geq1$. Let $G_B=G\backslash G_A=(V_B,E_B)$, where $G_B$ may or may not contain an edge. Since $G$ is $(2,3)$-sparse, we have $|E_A|\leq2|V_A|-3$. If $|E_B|\geq1$, then the $(2,3)$-sparsity of $G$ implies that $|E_B|\leq2|V_B|-3$. If $|E_B|=0$, then $|E_B|=0\leq2|V_B|-2$. In either case, $|E_B|\leq2|V_B|-2$. Hence,
		\begin{align*}
			|E|=|E_A|+|E_B|\leq(2|V_A|-3)+(2|V_B|-2)=2|V|-5.
		\end{align*}
		This contradicts that fact that $|E|=2|V|-4$, showing that it is not possible for $G$ to contain a proper connected component with an edge. If $G$ is disconnected, then this shows that it has no edges. The only graph with no edges that satisfies $|E|=2|V|-4$ is that on $2$ vertices. This gives the result.
	\end{proof}
    \begin{lemma}\label{Degree3Neighbour2MatchLoop}
		Suppose that $(G,m)$ is a $\Z^2\rtimes\mathcal{C}_s$-tight gain graph that has a vertex $v_0$ of degree $3$, with incident edges $e_1=(v_0,v_1;m(e_1))$, $e_2=(v_0,v_2;m(e_2))$ and $e_3=(v_0,v_2;m(e_3))$ such that $m(e_2)$ and $m(e_3)$ have the same $\mathcal{C}_s$-component. Then one of $G-v_0+e_{12}$ or $G-v_0+e_{13}$ is $\Z^2\rtimes\mathcal{C}_s$-tight.
	\end{lemma}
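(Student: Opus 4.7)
The plan is to use the blocker framework of Definition \ref{Blockers3} and show that at least one of the candidate $1$-reductions $G-v_0+e_{12}$ and $G-v_0+e_{13}$ avoids every type of blocker, following the general strategy of Lemma \ref{Degree3Neighbour2OverLoop} but adapting it to the absence of a $(2,1)$-tight blocker at $v_2$. Write $\sigma$ for the common $\mathcal{C}_s$-component of $m(e_2)$ and $m(e_3)$, so that $m(e_2)m(e_3)^{-1}$ is a pure translation; this equality of $\mathcal{C}_s$-components is the key new ingredient and will force certain augmented subgraphs to be purely periodic, playing the role of the missing $G_{23}$ from Lemma \ref{Degree3Neighbour2OverLoop}.

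I would first rule out $(2,1)$-tight and purely periodic blockers for both reductions. For a $(2,1)$-tight blocker $G_{ij}$, adjoining $v_0$ and its three incident edges (possible since $v_1,v_2 \in V_{ij}$) violates the $(2,1)$-sparsity of $G$. For a purely periodic blocker $G_{ij}$, the augmented graph $G_{ij}\cup\{v_0,e_1,e_2,e_3\}$ has $(2,1)$-tight edge count; I would verify it is still purely periodic by checking that the closed walk $e_2e_3^{-1}$ has translational gain (since $\sigma(m(e_2))=\sigma(m(e_3))$) and that cycles of the form $e_1\,P\,e_k^{-1}$ (for a $v_1$-to-$v_2$ path $P$ in $G_{ij}$ and $k\in\{2,3\}$) have $\mathcal{C}_s$-component $\sigma(m(e_1))\sigma(P)\sigma(m(e_k))^{-1}$, which collapses to the identity using the blocker's path-gain condition together with $\sigma(m(e_2))=\sigma(m(e_3))$. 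The resulting $(2,1)$-tight purely periodic subgraph then violates the $(2,2)$-sparsity required by the purely periodic condition.

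Next, I would rule out the simultaneous existence of balanced blockers $G_{12}$ and $G_{13}$. Applying Lemma \ref{SubraphInclusion} with $l_A=l_B=3$ and the $(2,1)$-sparsity bound on $G_{12}\cup G_{13}\cup\{v_0,e_1,e_2,e_3\}$ gives $|E_{12}\cap E_{13}|\ge 2|V_{12}\cap V_{13}|-4$. Since $G_{12}\cap G_{13}$ inherits $(2,3)$-sparsity from $G_{12}$, there are two sub-cases. If $|E_{12}\cap E_{13}|=2|V_{12}\cap V_{13}|-3$, the intersection is $(2,3)$-tight and hence connected by Lemma \ref{ConnectedTight}, so it contains a $v_1$-to-$v_2$ path whose gain must equal both $m(e_1)^{-1}m(e_2)$ and $m(e_1)^{-1}m(e_3)$, contradicting $m(e_2)\ne m(e_3)$. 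Otherwise $|E_{12}\cap E_{13}|=2|V_{12}\cap V_{13}|-4$, and Lemma \ref{ConnectedSmall} forces the intersection to be either connected (same contradiction) or to consist precisely of $\{v_1,v_2\}$ with no edges.

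This last disconnected case is the main obstacle, and it is where the $\mathcal{C}_s$-equality re-enters. Here $|E_{12}\cup E_{13}|=2|V_{12}\cup V_{13}|-2$, so $G_{12}\cup G_{13}\cup\{v_0,e_1,e_2,e_3\}$ is $(2,1)$-tight. I plan to show it is purely periodic by checking its generating cycles: balanced walks in $G_{12}$ or $G_{13}$ are trivial, $e_2e_3^{-1}$ contributes a translation, and the mixed cycle going from $v_1$ to $v_2$ via a balanced path in $G_{12}$ and returning along the reverse of a balanced path in $G_{13}$ has net gain $m(e_1)^{-1}\bigl[m(e_2)m(e_3)^{-1}\bigr]m(e_1)$, a conjugate of a translation and therefore itself a translation. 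The resulting $(2,1)$-tight purely periodic subgraph once again violates $(2,2)$-sparsity, yielding the final contradiction and completing the proof.
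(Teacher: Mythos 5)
Your proposal is correct and follows essentially the same route as the paper: rule out $(2,1)$-tight and purely periodic blockers by augmentation, apply Lemma~\ref{SubraphInclusion} plus $(2,3)$-sparsity to the pair of hypothetical balanced blockers, split by the two resulting intersection counts via Lemmas~\ref{ConnectedTight} and~\ref{ConnectedSmall}, and in the final disconnected case use the $\mathcal{C}_s$-equality of $m(e_2)$ and $m(e_3)$ to show the union is purely periodic. The only cosmetic differences are that you spell out the cycle check for why the purely periodic blocker's augmentation stays purely periodic (the paper states it without detail) and you phrase the decisive computation as ``a conjugate of the translation $m(e_2)m(e_3)^{-1}$'' rather than the paper's direct $\mathcal{C}_s$-component cancellation, but these are equivalent.
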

    \begin{proof}
        By the same argument that was used in the proof of Lemma \ref{Degree3Neighbour2OverLoop}, both $G-v_0+e_{12}$ and $G-v_0+e_{13}$ are $(2,1)$-tight. Suppose that $G-v_0+e_{12}$ fails the purely periodic condition, so it has a purely periodic blocker $G_{12}$. Adding $v_0$ with its incident edges to $G_{12}$ gives a $(2,1)$-tight purely periodic subgraph of $G$, contradicting the fact that $G$ satisfies the purely periodic condition. Hence, $G-v_0+e_{12}$  satisfies the purely periodic condition. The same method shows that $G-v_0+e_{13}$ also satisfies the purely periodic condition.

        Now suppose that $G-v_0+e_{12}$ and $G-v_0+e_{13}$ both fail the balanced condition. This means that they have balanced blockers $G_{12}$ and $G_{13}$ respectively. By Lemma \ref{SubraphInclusion},
		\begin{align}
			|E_{12}\cup E_{13}|+|E_{12}\cap E_{13}|=2|V_{12}\cup V_{13}|+2|V_{12}\cap V_{13}|-6.\label{EqDegree3Neighbour2MatchLoop}
		\end{align}
		Since $v_1,v_2\in V_{12}\cap V_{13}$, we have $|V_{12}\cap V_{13}|>1$. Note that $G_{12}$ is $(2,3)$-tight, so $|E_{12}\cap E_{13}|\leq2|V_{12}\cap V_{13}|-3$. Also, note that $|E_{12}\cup E_{13}|\leq2|V_{12}\cup V_{13}|-2$, as otherwise adding $v_0$ with its incident edges would break the $(2,1)$-sparsity of $G$. Combining these bounds with Equation~\eqref{EqDegree3Neighbour2MatchLoop} gives the following possible cases:
		\begin{enumerate}
			\item $|E_{12}\cup E_{13}|=2|V_{12}\cup V_{13}|-3$ and $|E_{12}\cap E_{13}|=2|V_{12}\cap V_{13}|-3$;
			\item $|E_{12}\cup E_{13}|=2|V_{12}\cup V_{13}|-2$ and $|E_{12}\cap E_{13}|=2|V_{12}\cap V_{13}|-4$.
		\end{enumerate}
        In case 1, $G_{12}\cap G_{13}$ is $(2,3)$-tight and thus connected by Lemma \ref{ConnectedTight}. Hence, $G_{12}\cap G_{13}$ contains at least one path from $v_1$ to $v_2$ that is contained in both $G_{12}$ and $G_{13}$. It can therefore be seen that $(m(e_1))^{-1}m(e_2)=(m(e_1))^{-1}m(e_3)$ and thus $m(e_2)=m(e_3)$. This is a contradiction, as parallel edges are required to have different gains.
        
        Now consider case 2. By Lemma \ref{ConnectedSmall}, $G_{12}\cap G_{13}$ is either connected or consists of just $2$ vertices and no edges. If $G_{12}\cap G_{13}$ is connected, then the same contradiction as for case 1 can be reached. It therefore remains only to consider the case where $G_{12}\cap G_{13}$ is the graph on $2$ vertices and no edges. In this case, note that $V_{12}\cap V_{13}=\{v_1,v_2\}$. We now show that $G_{12}\cup G_{13}$ is purely periodic.

    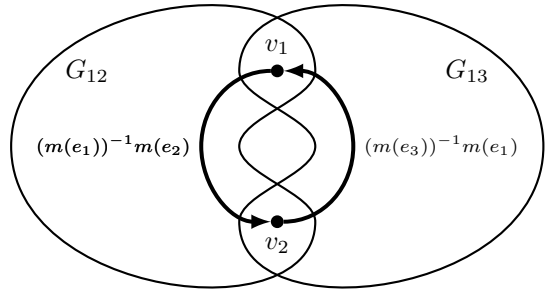
\begin{figure}[H]
            \begin{center}
			     \begin{tikzpicture}[line width=0.8pt,scale=0.5] 
        \draw   (-1,0)   .. controls ++(0,0.75)   and ++ (0,-0.75) ..
                (1,2)  .. controls ++(0,0.75)   and ++ (0,-0.75) ..
                (-1,4)   .. controls ++(0,3)     and ++ (0,4) ..
                (7,2)   .. controls ++(0,-4)    and ++ (0,-3) ..
                (-1,0) ;

        \draw   (1,0)   .. controls ++(0,0.75)   and ++ (0,-0.75) ..
                (-1,2)  .. controls ++(0,0.75)   and ++ (0,-0.75) ..
                (1,4)   .. controls ++(0,3)     and ++ (0,4) ..
                (-7,2)   .. controls ++(0,-4)    and ++ (0,-3) ..
                (1,0) ;
                
                \node[vertex,label=above:$v_1$] (x) at (0,4) {};
				\node[vertex,label=below:$v_2$] (y) at (0,0) {};
				\node[ivertex] (r) at (2,2) {};
				\node[ivertex] (l) at (-2,2) {};
				\draw[edge] (r)to node[right,labelsty]{$(m(e_3))^{-1}m(e_1)$} (r);
				\draw[edge] (l)to node[left,labelsty]{$(m(e_1))^{-1}m(e_2)$} (l);
				\draw[dedge] (y) to [in=270,out=0] (2,2) to  [in=0,out=90] (x);
				\draw[edge] (l)tonode[left,labelsty]{$(m(e_1))^{-1}m(e_2)$} (l);
				\draw[dedge] (x) to [in=90,out=180] (-2,2) to  [in=180,out=270] (y);

                \node[blankvertex] (G12) at (-5,4) {$G_{12}$};
			    \node[blankvertex] (G13) at (5,4) {$G_{13}$};
    \end{tikzpicture}
            \caption{Illustration for the proof of Lemma \ref{Degree3Neighbour2MatchLoop}, showing the case where the balanced blockers $G_{12}$ and $G_{13}$ intersect in exactly $2$ vertices.}
            \label{FigDegree3Neighbour2MatchLoop}
		\end{center}
        \end{figure}

        To see this, note that any closed walk that is fully contained in one of $G_{12}$ or $G_{13}$ has trivial net gain. Any other closed walk in $G_{12}\cup G_{13}$ can be expressed as a concatenation of walks in $G_{12}$ and walks in $G_{13}$. Any closed walks in either of these subgraphs will have trivial net gain, so, w.l.o.g., the only other case to consider is a concatenation of a walk from $v_1$ to $v_2$ in $G_{12}$ followed by a walk from $v_2$ to $v_1$ in $G_{13}$. This has net gain $(m(e_1))^{-1}m(e_2)(m(e_3))^{-1}m(e_1)$.
		Figure \ref{FigDegree3Neighbour2MatchLoop} illustrates the paths through $G_{12}$ and $G_{13}$.

        The $\mathcal{C}_s$-components of $(m(e_1))^{-1}$ and $m(e_1)$ cancel out, so $(m(e_1))^{-1}m(e_2)(m(e_3))^{-1}m(e_1)$ has the same $\mathcal{C}_s$-component as $m(e_2)(m(e_3))^{-1}$. Since $m(e_2)$ and $m(e_3)$ have equal $\mathcal{C}_s$-components, a closed walk of this form has a net gain in $\Z^2$. Hence, $G_{12}\cup G_{13}$ is purely periodic. Adding $v_0$ with its incident edges to $G_{12}\cup G_{13}$ gives a $(2,1)$-tight subgraph of $G$. This subgraph is purely periodic, since $e_2$ and $e_3$ have equal $\mathcal{C}_s$-gain components, and hence every path from $v_1$ to $v_2$ has the same $\mathcal{C}_s$-gain component.     This is a contradiction to $G$ satisfying the purely periodic condition.
  
        This completes the proof, showing that one of $G-v_0+e_{12}$ or $G-v_0+e_{13}$ is $\Z^2\rtimes\mathcal{C}_s$-tight.
    \end{proof}
    Combining Lemmas \ref{Degree3Neighbour2Loop}, \ref{Degree3Neighbour2OverLoop} and \ref{Degree3Neighbour2MatchLoop} shows that any vertex of degree $3$ with exactly $2$ neighbours admits a $1$-reduction that preserves $\Z^2\rtimes\mathcal{C}_s$-tightness. This completes the proof of Proposition \ref{Degree3Neighbour2}.

    \subsection{1-reductions on Vertices with Three Neighbours}\label{SubsubsectNeighbour3}

    For a degree $3$ vertex, the last type of neighbourhood to consider is that where the vertex has $3$ distinct neighbours.
    \begin{proposition}\label{Degree3Neighbour3}
		Suppose that $(G,m)$ is a $\Z^2\rtimes\mathcal{C}_s$-tight gain graph that has a vertex $v_0$ of degree $3$ with $3$ distinct neighbours. Then it is possible to form a $\Z^2\rtimes\mathcal{C}_s$-tight gain graph by a gained $1$-reduction at $v_0$.
	\end{proposition}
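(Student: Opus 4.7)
The proof follows the blocker-based strategy of Proposition \ref{Degree3Neighbour2}, adapted to three distinct neighbours. Let $v_0$ have incident edges $e_i = (v_0, v_i; m(e_i))$ for $i = 1, 2, 3$, and write $e_{ij} = (v_i, v_j; (m(e_i))^{-1} m(e_j))$ for the three candidate edges; the three possible $1$-reductions produce $G - v_0 + e_{ij}$. Assume for contradiction that none of these is $\Z^2\rtimes\mathcal{C}_s$-tight. Since any subgraph of $G - v_0 + e_{ij}$ that omits $e_{ij}$ already lies in $G$, each failure must be witnessed by a subgraph $G_{ij} \subset G$ that is either a $(2,1)$-tight, a purely periodic, or a balanced blocker, in the natural extension of Definition \ref{Blockers3} to non-loop candidate edges between distinct vertices.

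The first phase of the argument disposes of cases involving a $(2,1)$-tight blocker. If $G_{ij}$ is $(2,1)$-tight, then the third neighbour $v_k$ cannot lie in $V_{ij}$, since otherwise $G_{ij}$ together with $v_0$ and $e_1, e_2, e_3$ would violate $(2,1)$-sparsity of $G$. Combining a $(2,1)$-tight blocker with a second blocker $G_{ik}$ of any type via Lemma \ref{SubraphInclusion}, and using that $G_{ij} \cap G_{ik}$ is hereditarily $(2,1)$-, $(2,2)$- or $(2,3)$-sparse according to the type of $G_{ik}$, forces $|E_{ij} \cup E_{ik}| \geq 2|V_{ij} \cup V_{ik}| - 1$ in all cases except when $|V_{ij} \cap V_{ik}| = 1$ and $G_{ik}$ is balanced; since $V_{ij} \cup V_{ik}$ contains $v_1, v_2, v_3$, adding $v_0$ with its three incident edges then breaks $(2,1)$-sparsity of $G$. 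The residual small-intersection subcase is handled using Lemmas \ref{ConnectedTight} and \ref{ConnectedSmall} to pin down the structure of $G_{ij} \cap G_{ik}$, exactly as in the proof of Lemma \ref{Degree3Neighbour2MatchLoop}.

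The second phase treats the case in which each $G_{ij}$ is purely periodic or balanced. Applying Lemma \ref{SubraphInclusion} pairwise and using hereditary sparsity on intersections yields tight numerical constraints on pairwise unions. When the intersection is large enough for Lemma \ref{ConnectedTight} to force connectivity of $G_{ij} \cap G_{ik}$, a common path at $v_i$ in both blockers forces $(m(e_i))^{-1} m(e_j) = (m(e_i))^{-1} m(e_k)$, and hence $m(e_j) = m(e_k)$ (or a matching $\mathcal{C}_s$-component, in the purely periodic case), which contradicts either the distinct-gain requirement for parallel edges or another condition of $\Z^2\rtimes\mathcal{C}_s$-tightness. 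When all pairwise intersections collapse to single vertices, Lemma \ref{ConnectedSmall} identifies $G_{ij} \cap G_{ik}$ as two isolated vertices, and concatenating paths through $v_1, v_2, v_3$ across the three blockers shows that $G_{12} \cup G_{13} \cup G_{23}$, augmented by $v_0$ and its incident edges, is a $(2,1)$-tight subgraph of $G$ whose closed walks all have $\mathcal{C}_s$-trivial gain, violating the purely periodic condition.

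I expect the main obstacle to be the final configuration: three balanced (or mixed balanced/purely-periodic) blockers meeting pairwise in singletons. This is the three-neighbour analogue of the delicate final case of Lemma \ref{Degree3Neighbour2MatchLoop}, but the net-gain bookkeeping must now track three concatenated paths forming a triangle $v_1 \to v_2 \to v_3 \to v_1$ through three distinct blockers. The key point is that the $\mathcal{C}_s$-components of $m(e_1)^{-1} m(e_2)$, $m(e_2)^{-1} m(e_3)$ and $m(e_3)^{-1} m(e_1)$ multiply to the identity, so the triangular walk has net gain in $\Z^2$, which (combined with the obligatory inclusion of $v_0$ and $e_1, e_2, e_3$) produces the forbidden $(2,1)$-tight purely periodic subgraph and completes the contradiction.
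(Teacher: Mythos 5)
Your overall blocker-and-contradiction strategy is in the right spirit, and your phase involving a $(2,1)$-tight blocker correctly uses Lemma~\ref{SubraphInclusion} and the sparsity bounds. However, the central inference in your second phase is wrong, and it breaks the proof.

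You claim that when $G_{ij}\cap G_{ik}$ is connected, ``a common path at $v_i$ in both blockers forces $(m(e_i))^{-1}m(e_j)=(m(e_i))^{-1}m(e_k)$.'' This is the argument from Lemma~\ref{Degree3Neighbour2MatchLoop}, where it works because $v_0$ has only two distinct neighbours, so the two candidate edges $e_{12}$ and $e_{13}$ both connect $v_1$ to $v_2$, and both blockers constrain paths from $v_1$ to $v_2$. With three distinct neighbours the situation is essentially different: the candidate edges $e_{12}$, $e_{23}$, $e_{31}$ run between \emph{different} pairs of endpoints, so (say) $G_{12}$ constrains paths $v_1\to v_2$ while $G_{23}$ constrains paths $v_2\to v_3$. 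A common path in $G_{12}\cap G_{23}$ is a path from $v_2$ to some other vertex $w$, which places no constraint comparing $(m(e_1))^{-1}m(e_2)$ with $(m(e_2))^{-1}m(e_3)$. Even if one somehow obtained $m(e_j)=m(e_k)$, there would be no contradiction: $e_j$ and $e_k$ are not parallel (they go to distinct vertices), so the distinct-gain rule does not apply. The paper's argument for the connected-intersection case is altogether different: it applies Lemma~\ref{UnionGainBalanced} (or Lemma~\ref{UnionGain}) to conclude that the union of the two blockers is balanced (or purely periodic), computes its edge count, and shows that adding $v_0$ with its three edges to three distinct vertices yields a balanced subgraph with a $(2,2)$-edge count or a purely periodic subgraph with a $(2,1)$-edge count, violating the balanced or purely periodic condition of $\Z^2\rtimes\mathcal{C}_s$-tightness. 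No gain-matching is used.

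Your final paragraph also conflates two distinct configurations. When all three blockers are balanced and meet pairwise in singletons, the union has a $(2,3)$-edge count, the triangular walk shows the union is \emph{balanced} (net gain trivial, not merely in $\Z^2$), and adding $v_0$ produces a $(2,2)$-edge-count balanced subgraph, violating the balanced condition. Only when one of the blockers is purely periodic (a $(2,2)$-tight blocker, so the counts shift by one) does one obtain a $(2,1)$-tight purely periodic subgraph and a violation of the purely periodic condition. Your proposal collapses these into a single ``purely periodic'' violation, which is not correct. You also invoke Lemma~\ref{ConnectedSmall} as if it identifies a singleton intersection as ``two isolated vertices,'' but that lemma concerns the case $|E|=2|V|-4$, which cannot arise from an intersection consisting of a single vertex. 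Finally, the residual $(2,1)$-tight-blocker subcase you defer to ``exactly as in Lemma~\ref{Degree3Neighbour2MatchLoop}'' is handled in the paper (Lemma~\ref{Reflective8}, case 3) by a quite different argument: one shows $G_{12}\cap(G_{23}\cup G_{31})$ is $(2,3)$-tight hence connected, so a path from $v_1$ to $v_2$ inside it must pass through $v_3$, forcing all three neighbours into the $(2,1)$-tight blocker $G_{12}$ and contradicting Lemma~\ref{subgraphs}(1). You would need to supply this argument rather than appeal to the two-neighbour lemma.
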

    Suppose that $v_0\in V$ is a vertex of degree $3$ with incident edges $e_1=(v_0,v_1;m(e_1))$, $e_2=(v_0,v_2;m(e_2))$ and $e_3=(v_0,v_3;m(e_3))$, for some distinct $v_1,v_2,v_3\in V$. After deleting $v_0$, the possible options for edges to add for a $1$-reduction are $e_{12}=(v_1,v_2;(m(e_1))^{-1}m(e_2))$, $e_{23}=(v_2,v_3;(m(e_2))^{-1}m(e_3))$ or $e_{31}=(v_3,v_1;(m(e_3))^{-1}m(e_1))$. Figure \ref{Fig3Neighbour} illustrates the neighbourhood of $v_0$, with the candidate edges $e_{12}$, $e_{23}$ and $e_{31}$ represented by dashed lines.

    \begin{figure}[H]
        \begin{center}
		\begin{tikzpicture}[scale=0.9]
			\node[vertex,label=above:$v_0$] (v0) at (1,1) {};
			\node[vertex,label=right:$v_1$] (v1) at (2.5,2) {};
			\node[vertex,label=below:$v_2$] (v2) at (1,-0.5) {};
			\node[vertex,label=left:$v_3$] (v3) at (-0.5,2) {};
			
			\draw[dedge] (v0)edge node[left,labelsty]{$e_1$}(v1);
			\draw[dedge] (v0)edge node[left,labelsty]{$e_2$}(v2);
			\draw[dedge] (v0)edge node[right,labelsty]{$e_3$}(v3);
			
			\draw[dedge,dashed] (v1) to [bend left=30] node[right,labelsty]{$e_{12}$}(v2);
			\draw[dedge,dashed] (v2) to [bend left=30] node[left,labelsty]{$e_{23}$}(v3);
			\draw[dedge,dashed] (v3) to [bend left=30] node[above,labelsty]{$e_{31}$}(v1);
		\end{tikzpicture}
	\end{center}
    \caption{A vertex $v_0$ of degree $3$ with three neighbours.}
        \label{Fig3Neighbour}
    \end{figure}
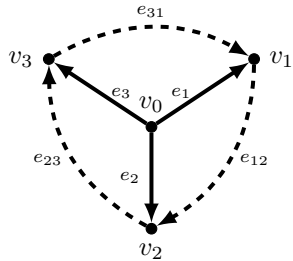

    To prove Proposition \ref{Degree3Neighbour3}, the aim is to show that one of $G-v_0+e_{12}$, $G-v_0+e_{23}$ or $G-v_0+e_{31}$ is $\Z^2\rtimes\mathcal{C}_s$-tight. Like for a vertex with $2$ neighbours, only subgraphs containing the new edges need to be considered when investigating subgraphs that fail each condition. Again, the cases where each reduction fails each condition can be characterised in terms of the blockers described in Definition \ref{Blockers3}.

    Lemmas \ref{Condition1Once} to \ref{Reflective8} will show that any vertex of this type admits a $1$-reduction to a $\Z^2\rtimes\mathcal{C}_s$-tight graph, proving Proposition \ref{Degree3Neighbour3}.  As steps towards this, Lemmas \ref{Lattice} to \ref{Count22Tight23} are combinatorial results that involve taking the unions and intersections of subgraphs with specific edge counts.  Lemmas \ref{UnionGainBalanced} and \ref{UnionGain} discuss the gain spaces of unions of subgraphs with connected intersections. These results will be used later to examine the unions and intersections of blockers.
    \begin{lemma}\label{Lattice}\cite[Theorem 5]{Pebble}
		Let $k,l\in\N$  with $0<l\leq k$. Let $G$ be a $(k,l)$-tight multigraph with $(k,l)$-tight subgraphs $G_A=(V_A,E_A)$ and $G_B=(V_B,E_B)$ such that $|V_A\cap V_B|\geq1$. Then $G_A\cup G_B$ and $G_A\cap G_B$ are both $(k,l)$-tight.
	\end{lemma}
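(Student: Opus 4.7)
The approach is the classical submodularity argument for count matroids. Let me lay out the steps I would follow.

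First, I would apply the standard set-theoretic identities
\begin{align*}
|E_A \cup E_B| + |E_A \cap E_B| &= |E_A| + |E_B|, \\
|V_A \cup V_B| + |V_A \cap V_B| &= |V_A| + |V_B|.
\end{align*}
Substituting the tight counts $|E_A| = k|V_A| - l$ and $|E_B| = k|V_B| - l$ gives
\begin{align*}
|E_A \cup E_B| + |E_A \cap E_B| = k\bigl(|V_A \cup V_B| + |V_A \cap V_B|\bigr) - 2l.
\end{align*}

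Next, I would bound each summand on the left from above using the fact that $G_A\cup G_B$ and $G_A\cap G_B$ are subgraphs of the $(k,l)$-tight graph $G$. For the union this is immediate: if it has an edge, then $(k,l)$-sparsity yields $|E_A \cup E_B| \leq k|V_A \cup V_B| - l$. For the intersection the case $|E_A\cap E_B|\geq 1$ again gives $|E_A \cap E_B| \leq k|V_A \cap V_B| - l$ directly; if instead $|E_A\cap E_B|=0$, then since $|V_A\cap V_B|\geq 1$ and $l\leq k$, the bound $0 \leq k|V_A\cap V_B|-l$ still holds. This is the one place where both hypotheses $|V_A\cap V_B|\geq 1$ and $l\leq k$ are used.

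Adding the two upper bounds gives exactly $k(|V_A \cup V_B| + |V_A \cap V_B|) - 2l$, matching the equality above. Consequently both inequalities must be equalities, so $|E_A\cup E_B| = k|V_A\cup V_B| - l$ and $|E_A\cap E_B| = k|V_A\cap V_B| - l$. Since any subgraph of $G_A\cup G_B$ or $G_A\cap G_B$ is also a subgraph of the $(k,l)$-sparse graph $G$, both $G_A\cup G_B$ and $G_A\cap G_B$ inherit $(k,l)$-sparsity, and combined with the matching edge counts they are $(k,l)$-tight. There is no real obstacle in this proof; the only subtle point, which I would make sure to flag, is the edge-free case for the intersection, where the sparsity inequality is not automatically available from the definition and must be verified by hand using $|V_A\cap V_B|\geq 1$ and $l\leq k$.
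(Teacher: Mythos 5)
The paper does not prove this lemma; it is cited directly as \cite[Theorem 5]{Pebble}, so there is no internal argument to compare against. Your submodularity proof is the standard and correct one for count matroids: you combine the two tight counts into $|E_A \cup E_B| + |E_A \cap E_B| = k\bigl(|V_A \cup V_B| + |V_A \cap V_B|\bigr) - 2l$, bound each summand from above by the $(k,l)$-sparsity of $G$, and force both inequalities to be equalities; sparsity of the union and intersection is inherited because any subgraph of either is a subgraph of $G$. The one small asymmetry is that you explicitly handle $|E_A\cap E_B|=0$ but write ``if it has an edge'' for the union without finishing the alternative. That case is harmless: an edgeless union forces $|E_A|=|E_B|=0$, hence $k|V_A|=k|V_B|=l$, so $l=k$, $|V_A|=|V_B|=1$, and $G_A=G_B$ is a single vertex; equivalently, the bound $0\leq k|V_A\cup V_B|-l$ holds by the same $|V_A\cup V_B|\geq 1$ and $l\leq k$ argument you use for the intersection. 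With that remark inserted the proof is complete and supplies an argument the paper only cites.
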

	The following lemma is based on a result from \cite[Lemma 4.4]{Inductive} regarding $(2,2)$-tight multigraphs. It has been adapted here for $(2,1)$-tight multigraphs.
	\begin{lemma}\label{subgraphs}
		Let $G$ be a $(2,1)$-tight multigraph with a vertex $v_0\in V$ of degree $3$, with distinct neighbours $v_1,v_2,v_3\in V$. Let $G-v_0$ be obtained from $G$ by deleting $v_0$ with its incident edges. Then the following all hold:
		\begin{enumerate}
			\item There is no $(2,1)$-tight subgraph of $G-v_0$ containing $v_1$, $v_2$ and $v_3$.
			\item If $v_1$ and $v_2$ are in a $(2,1)$-tight subgraph of $G-v_0$, then neither  the pair $\{v_1,v_3\}$ nor the pair $\{v_2,v_3\}$ is in a $(2,1)$-tight subgraph of $G-v_0$.
			\item If $v_1$ and $v_2$ are in a $(2,1)$-tight subgraph of $G-v_0$, then neither  the pair $\{v_1,v_3\}$ nor the pair $\{v_2,v_3\}$ is in a $(2,2)$-tight subgraph of $G-v_0$.
		\end{enumerate}
	\end{lemma}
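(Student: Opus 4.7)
The proof of all three parts will proceed by contradiction, with Parts (2) and (3) reducing to Part (1), which serves as the foundation.

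For Part (1), the plan is a direct edge-count argument. Suppose $H=(V_H, E_H) \subseteq G-v_0$ is a $(2,1)$-tight subgraph containing $v_1, v_2, v_3$. Adding $v_0$ along with the three incident edges $e_1, e_2, e_3$ produces a subgraph $H' \subseteq G$ satisfying
\begin{align*}
|V(H')| = |V_H|+1, \qquad |E(H')| = (2|V_H|-1) + 3 = 2|V(H')|,
\end{align*}
which violates the $(2,1)$-sparsity of $G$ and yields the desired contradiction.

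For Part (2), the plan is to merge two $(2,1)$-tight subgraphs into a single one and invoke Part (1). Suppose $G_{12}$ is a $(2,1)$-tight subgraph of $G-v_0$ containing $v_1, v_2$, and suppose (say) $G_{13}$ is a $(2,1)$-tight subgraph of $G-v_0$ containing $v_1, v_3$. Since $v_1 \in V_{12} \cap V_{13}$, Lemma \ref{Lattice} with $k=2, l=1$ applies and shows that $G_{12} \cup G_{13}$ is $(2,1)$-tight. This union is contained in $G-v_0$ and contains all of $v_1, v_2, v_3$, contradicting Part (1). The case $\{v_2, v_3\}$ is handled symmetrically using $v_2 \in V_{12} \cap V_{23}$.

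For Part (3), the plan is similar but uses Lemma \ref{SubraphInclusion} instead of Lemma \ref{Lattice}, since the two blockers have different sparsity counts. Suppose $G_{12}$ is a $(2,1)$-tight subgraph of $G-v_0$ containing $v_1, v_2$, and suppose $G_{13}$ is a $(2,2)$-tight subgraph of $G-v_0$ containing $v_1, v_3$ (the case $\{v_2, v_3\}$ is symmetric). Applying Lemma \ref{SubraphInclusion} with $l_A = 1$ and $l_B = 2$ yields
\begin{align*}
|E_{12} \cup E_{13}| + |E_{12} \cap E_{13}| = 2|V_{12} \cup V_{13}| + 2|V_{12} \cap V_{13}| - 3.
\end{align*}
Since the intersection $G_{12} \cap G_{13}$ is a subgraph of the $(2,2)$-tight graph $G_{13}$, it is $(2,2)$-sparse, giving $|E_{12} \cap E_{13}| \leq 2|V_{12} \cap V_{13}| - 2$. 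Substituting yields $|E_{12} \cup E_{13}| \geq 2|V_{12} \cup V_{13}| - 1$, and combined with the $(2,1)$-sparsity of $G$, equality must hold. Thus $G_{12} \cup G_{13}$ is a $(2,1)$-tight subgraph of $G-v_0$ containing $v_1, v_2, v_3$, contradicting Part (1).

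The main subtlety will be in Part (3), namely verifying the bound $|E_{12} \cap E_{13}| \leq 2|V_{12} \cap V_{13}| - 2$ in the corner case $|V_{12} \cap V_{13}| = 1$ (where the intersection carries no edges and the bound becomes $0 \leq 0$). This is routine but must be handled explicitly to ensure the inclusion-exclusion estimate is valid in every configuration.
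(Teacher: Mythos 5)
Your proof is correct and follows essentially the same route as the paper's: part (1) is the direct overcount argument, part (2) applies Lemma~\ref{Lattice} to merge the two $(2,1)$-tight blockers, and part (3) applies Lemma~\ref{SubraphInclusion} with the intersection bound, just as the paper does (the paper happens to pick the pair $\{v_2,v_3\}$ and you pick $\{v_1,v_3\}$, but this is immaterial). Your added remark on the corner case $|V_{12}\cap V_{13}|=1$ in part (3) is a harmless extra bit of care that the paper leaves implicit.
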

    \begin{proof}
        If $G-v_0$ has a $(2,1)$-tight subgraph containing all neighbours of $v_0$, then adding $v_0$ with its incident edges to this subgraph would give an overcounted subgraph of $G$, breaking $(2,1)$-sparsity. Hence, point 1 holds.

        Suppose that $v_1$ and $v_2$ are in a $(2,1)$-tight subgraph $G_{12}=(V_{12},E_{12})$ of $G-v_0$ and that $v_2$ and $v_3$ are in a $(2,1)$-tight subgraph $G_{23}=(V_{23},E_{23})$ of $G-v_0$. By applying Lemma \ref{Lattice} on subgraphs of $G$, it follows that $G_{12}\cup G_{23}$ is a $(2,1)$-tight subgraph of $G-v_0$ containing $v_1$, $v_2$ and $v_3$.  This contradicts point 1, so point 2 holds.

        Suppose that $v_1$ and $v_2$ are in a $(2,1)$-tight subgraph $G_{12}=(V_{12},E_{12})$ of $G-v_0$ and that $v_2$ and $v_3$ are in a $(2,2)$-tight subgraph $G_{23}=(V_{23},E_{23})$ of $G-v_0$. By Lemma \ref{SubraphInclusion},
		\begin{align}
			|E_{12}\cup E_{23}|+|E_{12}\cap E_{23}|=2|V_{12}\cup V_{23}|+2|V_{12}\cap V_{23}|-3.\label{EqSubgraphs}
		\end{align}
		Since $G_{23}$ is $(2,2)$-tight, $|E_{12}\cap E_{23}|\leq2|V_{12}\cap V_{23}|-2$. Combining this with Equation~\eqref{EqSubgraphs} shows that $|E_{12}\cup E_{23}|\geq2|V_{12}\cup V_{23}|-1$. This implies that $G_{12}\cup G_{23}$ is a $(2,1)$-tight subgraph of $G-v_0$ containing $v_1$, $v_2$ and $v_3$. This contradicts point 1, so point 3 holds.
    \end{proof}

    The proofs of the next three lemmas are inspired by the version of \cite[Lemma 4.2]{Inductive} given in the preprint of that article on ResearchGate. See also a similar method in \cite[Theorem 5(1)]{Pebble}.
	
    \begin{lemma}\label{Tight23Pair}
		Let $G$ be a $(2,1)$-tight multigraph that has a vertex $v_0$ of degree $3$ with distinct neighbours $v_1,v_2,v_3\in V$. Let $G_{12}=(V_{12},E_{12})$ be a $(2,3)$-tight subgraph of $G$ with $v_1,v_2\in V_{12}$ and $v_0\notin V_{12}$. Let $G_{23}=(V_{23},E_{23})$ be a $(2,3)$-tight subgraph of $G$ with $v_2,v_3\in V_{23}$ and $v_0\notin V_{23}$. Then one of the following holds:
		\begin{enumerate}
			\item $|V_{12}\cap V_{23}|>1$ and $|E_{12}\cup E_{23}|=2|V_{12}\cup V_{23}|-3$ and $|E_{12}\cap E_{23}|=2|V_{12}\cap V_{23}|-3$,
			\item $|V_{12}\cap V_{23}|>1$ and $|E_{12}\cup E_{23}|=2|V_{12}\cup V_{23}|-2$ and $|E_{12}\cap E_{23}|=2|V_{12}\cap V_{23}|-4$,
			\item $|V_{12}\cap V_{23}|=1$ and $|E_{12}\cup E_{23}|=2|V_{12}\cup V_{23}|-4$.
		\end{enumerate}
	\end{lemma}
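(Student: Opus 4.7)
The plan is to apply Lemma~\ref{SubraphInclusion} to $G_{12}$ and $G_{23}$ (with $l_A=l_B=3$) and then pin down which values are possible by combining the $(2,3)$-sparsity bound on the intersection with the $(2,1)$-sparsity of $G$ applied to the union together with $v_0$ and its three incident edges. Concretely, Lemma~\ref{SubraphInclusion} gives
\begin{equation*}
    |E_{12}\cup E_{23}|+|E_{12}\cap E_{23}|=2|V_{12}\cup V_{23}|+2|V_{12}\cap V_{23}|-6,
\end{equation*}
so the two quantities on the left are determined once we know one of them, and the task is to bracket one of them tightly.

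I would handle the case $|V_{12}\cap V_{23}|=1$ first, since then $V_{12}\cap V_{23}=\{v_2\}$ forces $|E_{12}\cap E_{23}|=0$, and plugging into the identity immediately yields $|E_{12}\cup E_{23}|=2|V_{12}\cup V_{23}|-4$, giving case~3.

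Next, assume $|V_{12}\cap V_{23}|>1$. For the upper bound on $|E_{12}\cup E_{23}|$: the subgraph $G_{12}\cup G_{23}$ contains $v_1,v_2,v_3$ but not $v_0$; adding $v_0$ with its three incident edges produces a subgraph of $G$ with $|V_{12}\cup V_{23}|+1$ vertices and $|E_{12}\cup E_{23}|+3$ edges, and $(2,1)$-sparsity of $G$ forces $|E_{12}\cup E_{23}|\leq 2|V_{12}\cup V_{23}|-2$. For the upper bound on $|E_{12}\cap E_{23}|$: since $G_{12}\cap G_{23}\subseteq G_{12}$ and $G_{12}$ is $(2,3)$-tight (hence $(2,3)$-sparse), we get $|E_{12}\cap E_{23}|\leq 2|V_{12}\cap V_{23}|-3$ (if the intersection has no edges the bound is trivial since $|V_{12}\cap V_{23}|\geq 2$). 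Feeding both bounds into the identity shows the two pairs $(|E_{12}\cup E_{23}|,|E_{12}\cap E_{23}|)$ must take one of the two admissible forms $(2|V_{12}\cup V_{23}|-3,\,2|V_{12}\cap V_{23}|-3)$ or $(2|V_{12}\cup V_{23}|-2,\,2|V_{12}\cap V_{23}|-4)$, which are exactly cases~1 and~2.

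I do not anticipate a serious obstacle: the argument is a clean submodular-counting trichotomy, and the only subtlety is making sure to invoke $(2,3)$-sparsity on the intersection (and justifying it when $|E_{12}\cap E_{23}|=0$ by noting $2|V_{12}\cap V_{23}|-3\geq 1$ under $|V_{12}\cap V_{23}|>1$), and to use $(2,1)$-sparsity on the enlarged subgraph including $v_0$ rather than on $G_{12}\cup G_{23}$ alone, since that is where the three extra edges at $v_0$ tighten the bound from $-1$ to $-2$.
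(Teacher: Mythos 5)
Your proposal is correct and follows essentially the same route as the paper's proof: the same application of Lemma~\ref{SubraphInclusion}, the same immediate handling of the $|V_{12}\cap V_{23}|=1$ case, and the same pair of upper bounds (on the intersection via $(2,3)$-sparsity, on the union via $(2,1)$-sparsity of $G$ with $v_0$ and its three edges re-attached) that bracket the identity into exactly the two remaining cases. The only cosmetic difference is that the paper delegates the union bound to Lemma~\ref{subgraphs}(1) whereas you re-derive it inline, which is the same argument.
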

    \begin{proof}
        By Lemma \ref{SubraphInclusion},
	\begin{align}
		|E_{12}\cup E_{23}|+|E_{12}\cap E_{23}|=2|V_{12}\cup V_{23}|+2|V_{12}\cap V_{23}|-6.\label{Tight23PairCount}
	\end{align}
	If $|V_{12}\cap V_{23}|=1$, then $|E_{12}\cap E_{23}|=0$  and it clearly follows that $|E_{12}\cup E_{23}|=2|V_{12}\cup V_{23}|-4$.

    Now suppose that $|V_{12}\cap V_{23}|>1$. Since $G_{12}$ and $G_{23}$ are $(2,3)$-tight, $|E_{12}\cap E_{23}|\leq2|V_{12}\cap V_{23}|-3$. Using this with Equation~\eqref{Tight23PairCount} shows that $|E_{12}\cup E_{23}|\geq2|V_{12}\cup V_{23}|-3$. Also, Lemma \ref{subgraphs}(1) shows that $|E_{12}\cup E_{23}|\leq2|V_{12}\cup V_{23}|-2$. This gives the two possible edge counts for $G_{12}\cup G_{23}$. Substituting these into Equation~\eqref{Tight23PairCount} gives the corresponding edge counts for $G_{12}\cap G_{23}$. Hence, the lemma holds.
    \end{proof}
    \begin{lemma}\label{Tight22Pair}
		Let $G$ be a $(2,1)$-tight multigraph that has a vertex $v_0$ of degree $3$ with distinct neighbours $v_1,v_2,v_3\in V$. Let $G_{12}=(V_{12},E_{12})$ be a subgraph of $G$ that satisfies $|E_{12}|=2|V_{12}|-2$ with $v_1,v_2\in V_{12}$ and $v_0\notin V_{12}$. Let $G_{23}=(V_{23},E_{23})$ be a $(2,2)$-tight subgraph of $G$ with $v_2,v_3\in V_{23}$ and $v_0\notin V_{23}$. Then 
		\begin{align*}
			|E_{12}\cup E_{23}|=2|V_{12}\cup V_{23}|-2\text{ and }|E_{12}\cap E_{23}|=2|V_{12}\cap V_{23}|-2.
		\end{align*}
	\end{lemma}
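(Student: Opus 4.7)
The plan is a short inclusion-exclusion argument coupled with the global $(2,1)$-sparsity of $G$. First I would apply Lemma \ref{SubraphInclusion} with $l_A = l_B = 2$ to obtain
\begin{align*}
|E_{12}\cup E_{23}|+|E_{12}\cap E_{23}|=2|V_{12}\cup V_{23}|+2|V_{12}\cap V_{23}|-4.
\end{align*}
Since $v_2 \in V_{12}\cap V_{23}$, the intersection is non-empty. The next step is to establish the upper bound $|E_{12}\cap E_{23}| \leq 2|V_{12}\cap V_{23}|-2$ using the $(2,2)$-sparsity of $G_{23}$: if $|V_{12}\cap V_{23}| \geq 2$ and the intersection contains an edge, apply $(2,2)$-sparsity directly (and if it contains no edges, the inequality is immediate from $|V_{12}\cap V_{23}|\geq 2$); if $|V_{12}\cap V_{23}| = 1$, then any edge of $E_{12}\cap E_{23}$ must be a loop at $v_2$, and a single such loop would already give a subgraph of $G_{23}$ with $|E'|=1 > 2|V'|-2 = 0$, contradicting $(2,2)$-sparsity.

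Combined with the identity above, this upper bound yields the lower bound $|E_{12}\cup E_{23}| \geq 2|V_{12}\cup V_{23}|-2$. For the matching upper bound I would use the $(2,1)$-sparsity of $G$ together with $v_0$. Since $v_1,v_2,v_3 \in V_{12}\cup V_{23}$ and $v_0 \notin V_{12}\cup V_{23}$, adding $v_0$ with its three incident edges to $G_{12}\cup G_{23}$ produces a subgraph of $G$ on $|V_{12}\cup V_{23}|+1$ vertices and $|E_{12}\cup E_{23}|+3$ edges. The $(2,1)$-sparsity of $G$ therefore forces
\begin{align*}
|E_{12}\cup E_{23}|+3 \leq 2(|V_{12}\cup V_{23}|+1)-1,
\end{align*}
i.e.\ $|E_{12}\cup E_{23}| \leq 2|V_{12}\cup V_{23}|-2$. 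Hence equality holds, and substituting back into the inclusion-exclusion identity gives $|E_{12}\cap E_{23}| = 2|V_{12}\cap V_{23}|-2$, as required.

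There is no serious obstacle here: this result is essentially a bookkeeping lemma, with the only mildly delicate point being the separate treatment of the single-vertex intersection (so that the $(2,2)$-sparsity of $G_{23}$ can be applied uniformly as $|E_{12}\cap E_{23}| \leq 2|V_{12}\cap V_{23}|-2$). Note that, unlike in Lemma \ref{Tight23Pair}, we do not need to split into several cases in the conclusion, because the looser $(2,2)$-sparsity bound on $G_{23}$ (compared with $(2,3)$-sparsity) collapses the two possible extremal configurations into a single one.
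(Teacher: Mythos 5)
Your proof is correct and follows essentially the same route as the paper: Lemma \ref{SubraphInclusion} gives the inclusion–exclusion identity, the $(2,2)$-sparsity of $G_{23}$ bounds the intersection, and $(2,1)$-sparsity of $G$ together with $v_0$ and its three incident edges bounds the union. The only cosmetic difference is that the paper invokes Lemma \ref{subgraphs}(1) for the union bound, whereas you unpack that lemma directly by adding $v_0$ to $G_{12}\cup G_{23}$ and counting; you are also somewhat more careful than the paper in justifying that the $(2,2)$-sparsity bound applies even when $|V_{12}\cap V_{23}|=1$ or the intersection has no edges.
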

    \begin{proof}
        By Lemma \ref{SubraphInclusion},
		\begin{align}
			|E_{12}\cup E_{23}|+|E_{12}\cap E_{23}|=2|V_{12}\cup V_{23}|+2|V_{12}\cap V_{23}|-4.\label{Tight22PairCount}
		\end{align}
		Since $G_{23}$ is $(2,2)$-tight, $|E_{12}\cap E_{23}|\leq2|V_{12}\cap V_{23}|-2$. Also, Lemma \ref{subgraphs}(1) shows that $|E_{12}\cup E_{23}|\leq2|V_{12}\cup V_{23}|-2$. Combining these bounds with Equation~\eqref{Tight22PairCount} shows that $|E_{12}\cup E_{23}|=2|V_{12}\cup V_{23}|-2$ and $|E_{12}\cap E_{23}|=2|V_{12}\cap V_{23}|-2$.
    \end{proof}
    \begin{lemma}\label{Count22Tight23}
		Let $G$ be a $(2,1)$-tight multigraph that has a vertex $v_0$ of degree $3$ with distinct neighbours $v_1,v_2,v_3\in V$. Let $G_{12}=(V_{12},E_{12})$ be a subgraph of $G$ that satisfies $|E_{12}|=2|V_{12}|-2$ with $v_1,v_2\in V_{12}$ and $v_0\notin V_{12}$. Let $G_{23}=(V_{23},E_{23})$ be a $(2,3)$-tight subgraph of $G$ with $v_2,v_3\in V_{23}$ and $v_0\notin V_{23}$. Suppose that $|V_{12}\cap V_{23}|>1$. Then 
		\begin{align*}
			|E_{12}\cup E_{23}|=2|V_{12}\cup V_{23}|-2\text{ and }|E_{12}\cap E_{23}|=2|V_{12}\cap V_{23}|-3.
		\end{align*}
	\end{lemma}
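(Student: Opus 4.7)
The plan is to mirror the proofs of Lemmas~\ref{Tight23Pair} and~\ref{Tight22Pair}: apply the inclusion-exclusion identity from Lemma~\ref{SubraphInclusion} and then pin down the union and intersection counts by sandwiching between two upper bounds.

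First I would combine the hypotheses $|E_{12}|=2|V_{12}|-2$ and $|E_{23}|=2|V_{23}|-3$ with Lemma~\ref{SubraphInclusion} (taking $l_A=2$, $l_B=3$) to obtain
\begin{align*}
|E_{12}\cup E_{23}|+|E_{12}\cap E_{23}|=2|V_{12}\cup V_{23}|+2|V_{12}\cap V_{23}|-5.
\end{align*}
Next I would bound the intersection from above: $G_{12}\cap G_{23}$ is a subgraph of the $(2,3)$-tight graph $G_{23}$, so by $(2,3)$-sparsity (and the assumption $|V_{12}\cap V_{23}|>1$, which ensures the intersection is non-empty) we have $|E_{12}\cap E_{23}|\leq 2|V_{12}\cap V_{23}|-3$. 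Then I would bound the union from above using Lemma~\ref{subgraphs}(1): $G_{12}\cup G_{23}$ is a subgraph of $G-v_0$ whose vertex set contains all three neighbours $v_1, v_2, v_3$ of $v_0$ (indeed $v_1\in V_{12}$, $v_3\in V_{23}$, and $v_2\in V_{12}\cap V_{23}$), so it cannot be $(2,1)$-tight; combined with the $(2,1)$-sparsity of $G$, this gives $|E_{12}\cup E_{23}|\leq 2|V_{12}\cup V_{23}|-2$.

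Finally, adding these two upper bounds yields exactly $2|V_{12}\cup V_{23}|+2|V_{12}\cap V_{23}|-5$, which matches the identity above. Hence equality must hold in both bounds, giving the stated values $|E_{12}\cup E_{23}|=2|V_{12}\cup V_{23}|-2$ and $|E_{12}\cap E_{23}|=2|V_{12}\cap V_{23}|-3$. There is no real obstacle here; the argument is a direct three-line calculation once the right upper bounds are identified, and it is essentially a hybrid of the proofs of Lemmas~\ref{Tight23Pair} and~\ref{Tight22Pair}.
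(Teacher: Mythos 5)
Your proof is correct and matches the paper's argument essentially line for line: apply Lemma~\ref{SubraphInclusion} with $l_A=2$, $l_B=3$, bound $|E_{12}\cap E_{23}|$ via $(2,3)$-sparsity of $G_{23}$ together with $|V_{12}\cap V_{23}|>1$, bound $|E_{12}\cup E_{23}|$ via Lemma~\ref{subgraphs}(1), and note the bounds are saturated. No discrepancy to report.
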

    \begin{proof}
        By Lemma \ref{SubraphInclusion},
		\begin{align}
			|E_{12}\cup E_{23}|+|E_{12}\cap E_{23}|=2|V_{12}\cup V_{23}|+2|V_{12}\cap V_{23}|-5.\label{Count22Tight23Count}
		\end{align}
		Since $G_{23}$ is $(2,3)$-tight and $|V_{12}\cap V_{23}|>1$, we have $|E_{12}\cap E_{23}|\leq2|V_{12}\cap V_{23}|-3$. Also, Lemma \ref{subgraphs}(1) shows that $|E_{12}\cup E_{23}|\leq2|V_{12}\cup V_{23}|-2$. Combining these bounds with Equation~\eqref{Count22Tight23Count} shows that $|E_{12}\cup E_{23}|=2|V_{12}\cup V_{23}|-2$ and $|E_{12}\cap E_{23}|=2|V_{12}\cap V_{23}|-3$.
    \end{proof}
    The following lemma will be used later when considering pairs of balanced blockers.
	\begin{lemma}\label{UnionGainBalanced}\cite[Lemma 2.4]{EGRES}
		Let $\Gamma$ be a group and let $(G,m)$ be a $\Gamma$-gain graph with balanced subgraphs $G_A$ and $G_B$. If $G_A\cap G_B$ is connected, then $G_A\cup G_B$ is balanced.
	\end{lemma}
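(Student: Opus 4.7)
The plan is to use the standard characterisation that a $\Gamma$-gain graph $(H,m)$ is balanced if and only if there exists a \emph{potential function} $f:V(H)\to\Gamma$ with $m(e)=f(v_i)^{-1}f(v_j)$ for every oriented edge $e=(v_i,v_j;m(e))$. The ``if'' direction is immediate, since then the net gain of any closed walk telescopes to the identity. For the ``only if'' direction, on each connected component of $H$ one fixes a root, picks a spanning tree, and sets $f(v)$ to be the net gain of the unique tree-path from the root to $v$; balance then forces the potential relation to hold on every non-tree edge as well. (This is essentially the content of the switching observation preceding Lemma~\ref{lemma:swicth}.)

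Given this, I would fix such potentials $f_A:V_A\to\Gamma$ for $(G_A,m)$ and $f_B:V_B\to\Gamma$ for $(G_B,m)$. For every edge $e=(v_i,v_j;m(e))$ of $G_A\cap G_B$, equating the two expressions $f_A(v_i)^{-1}f_A(v_j)=m(e)=f_B(v_i)^{-1}f_B(v_j)$ rearranges to $f_A(v_j)f_B(v_j)^{-1}=f_A(v_i)f_B(v_i)^{-1}$. Hence the map $v\mapsto f_A(v)f_B(v)^{-1}$ is constant along every edge of $G_A\cap G_B$ and therefore on each connected component of that subgraph. By the hypothesis that $G_A\cap G_B$ is connected, this map takes a single value $d\in\Gamma$ on $V_A\cap V_B$, so $f_A(v)=d\cdot f_B(v)$ throughout the overlap.

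Replacing $f_B$ by $d\cdot f_B$ yields a new potential function for $(G_B,m)$ (left multiplication by a fixed group element preserves the potential relation) that now agrees with $f_A$ on $V_A\cap V_B$, so the two assemble into a well-defined $f:V_A\cup V_B\to\Gamma$ which by construction satisfies $m(e)=f(v_i)^{-1}f(v_j)$ on every edge of $G_A\cup G_B$. By the characterisation again, $(G_A\cup G_B,m)$ is balanced. The connectedness hypothesis is used exactly in one place, namely in collapsing the locally constant function $f_Af_B^{-1}$ to a single element of $\Gamma$: if $G_A\cap G_B$ had several components, each could contribute a different constant and no global rescaling of $f_B$ would glue to a potential on $G_A\cup G_B$. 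I expect no further obstacle in the argument.
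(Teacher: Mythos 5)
Your proof is correct. The potential-function characterisation of balanced gain graphs is a standard and well-known fact (it is precisely the observation that a balanced gain graph can be switched to have all identity labels, as noted just before Lemma~\ref{lemma:swicth}), and your gluing argument is clean: on the connected overlap the two potentials differ by a single left-multiplicative constant, so one of them can be re-normalised and the two then patch together to a global potential on $G_A\cup G_B$.

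The paper itself does not prove this statement---it is quoted as Lemma 2.4 of the cited EGRES reference---so there is no in-paper proof to compare against directly. However, the paper does prove the closely analogous Lemma~\ref{UnionGain} (the purely periodic variant), and there the argument is of a different flavour: one takes an arbitrary closed walk in $G_A\cup G_B$ and, at each vertex where it crosses between the two subgraphs, inserts a path through $G_A\cap G_B$ back to the starting crossing vertex together with its reverse, thereby rewriting the net gain as a product of net gains of closed walks each lying entirely in $G_A$ or in $G_B$. Your potential approach buys something here: it avoids the slightly delicate bookkeeping about the order in which those closed sub-walks are concatenated (which matters in a non-abelian $\Gamma$), replacing it with the trivial observation that left multiplication by a constant preserves the potential relation. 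The walk-augmentation approach, on the other hand, generalises more directly to the setting of Lemma~\ref{UnionGain}, where ``balanced'' is replaced by ``gain space contained in a normal subgroup,'' since it never requires collapsing the gain space all the way to the identity; a potential-based version would need the slightly more involved statement that every gain can be written as $f(v_i)^{-1}n f(v_j)$ with $n$ in the normal subgroup. Both routes are valid; yours is the more economical one for the balanced case.
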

    The following is a variation of Lemma \ref{UnionGainBalanced} for purely periodic subgraphs of $\Z^2\rtimes\mathcal{C}_s$-gain graphs. The proof here is based on part of the proof of \cite[Proposition 5.4]{Inductive}, where an argument is described that proves a variant of Lemma \ref{UnionGainBalanced} for $\Z^2$-gain graphs.
    \begin{lemma}\label{UnionGain}
		Let $(G,m)$ be a $\Z^2\rtimes\mathcal{C}_s$-gain graph with purely periodic subgraphs $G_A$ and $G_B$. If $G_A\cap G_B$ is connected, then $G_A\cup G_B$ is purely periodic.
	\end{lemma}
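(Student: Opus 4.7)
The plan is to mimic the standard proof of Lemma~\ref{UnionGainBalanced} for the balanced case, replacing ``trivial gain'' by ``gain in $\Z^2$'' throughout. The key structural fact that makes this work is that $\Z^2$ is a normal subgroup of $\Z^2\rtimes\mathcal{C}_s$ (it is the kernel of the projection to $\mathcal{C}_s$), so being contained in $\Z^2$ is preserved both under conjugation of the gain space at a change of base vertex and under taking products of group elements.

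Since $G_A\cap G_B$ is connected, it has a nonempty vertex set, and I would fix a base vertex $v\in V(G_A\cap G_B)$. The core step is to show that every closed walk $W$ at $v$ in $G_A\cup G_B$ has net gain in $\Z^2$. I would assign each edge of $W$ to one of $G_A$ or $G_B$ (arbitrarily for edges in the intersection) and call an interior vertex of $W$ a \emph{transition vertex} whenever the walk passes from an edge assigned to $G_A$ to one assigned to $G_B$ or vice versa. Each transition vertex $u$ is an endpoint of edges in both $G_A$ and $G_B$, so $u\in V(G_A)\cap V(G_B)=V(G_A\cap G_B)$, and by connectedness of $G_A\cap G_B$ there is a path $P_u$ from $u$ to $v$ inside $G_A\cap G_B$. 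Inserting $P_u P_u^{-1}$ at each transition does not alter the net gain of $W$ but rewrites it as a concatenation of closed walks at $v$, each lying entirely in $G_A$ or entirely in $G_B$ (since $G_A\cap G_B$ is contained in both). Because each subgraph is purely periodic at $v$, each of these closed walks has net gain in $\Z^2$, and the product lies in $\Z^2$ as well.

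For a general base vertex $u\in V(G_A\cup G_B)$, if $u$ lies in the same connected component of $G_A\cup G_B$ as $v$, then $\langle(G_A\cup G_B,m)\rangle_u$ is a conjugate of $\langle(G_A\cup G_B,m)\rangle_v$ inside $\Z^2\rtimes\mathcal{C}_s$, and normality of $\Z^2$ gives the inclusion in $\Z^2$. If instead the component containing $u$ is disjoint from $V(G_A\cap G_B)$, then every edge of this component lies in only one of $G_A$ or $G_B$, and moreover all such edges lie in the same one: edges in $G_A\setminus G_B$ have both endpoints in $V(G_A)\setminus V(G_B)$, edges in $G_B\setminus G_A$ have both endpoints in $V(G_B)\setminus V(G_A)$, and these vertex sets are disjoint. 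Thus the gain space at $u$ in $G_A\cup G_B$ coincides with the gain space at $u$ in $G_A$ or in $G_B$, which is contained in $\Z^2$ by hypothesis.

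The main obstacle to formalise is the walk decomposition in the central step: one must carefully track which subgraph each edge has been assigned to, so that the correction paths $P_u P_u^{-1}$ are inserted at exactly the transition vertices and the resulting segments genuinely lie in a single $G_A$ or $G_B$. Once this bookkeeping is handled—essentially the same argument used for \cite[Lemma 2.4]{EGRES} and in the $\Z^2$-gain variant in \cite[Proposition 5.4]{Inductive}—the normality of $\Z^2$ in $\Z^2\rtimes\mathcal{C}_s$ does all the remaining work.
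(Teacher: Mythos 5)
Your proof is correct and uses the same core technique as the paper: inserting a path through $G_A\cap G_B$ together with its reverse at each transition to decompose any closed walk into a product of closed walks lying entirely in $G_A$ or entirely in $G_B$, then using that $\Z^2$ is closed under products. Your extra care in treating a general base vertex via conjugation and the normality of $\Z^2$ in $\Z^2\rtimes\mathcal{C}_s$, and in handling components disjoint from $V(G_A\cap G_B)$, is slightly more explicit than the paper's phrasing but does not change the underlying argument.
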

    \begin{proof}
        Suppose that $G_A\cap G_B$ is connected and consider a closed walk $C$ in $G_A\cup G_B$. If $C$ is entirely contained within one of $G_A$ or $G_B$, then it is obvious that its net gain is in $\Z^2$. Otherwise, suppose that $C$ passes through both $G_A\backslash G_B$ and $G_B\backslash G_A$. Any such closed walk is formed of a composition of walks in $G_A$ and walks in $G_B$, which meet at vertices in $G_A\cap G_B$. First, suppose that $C$ starts at $x\in V_A\cap V_B$, follows a path $P_1$ through $G_A$ with gain $m_1\in\Z^2\rtimes\mathcal{C}_s$  to $y\in V_A\cap V_B$ and then follows a path $P_2$ through $G_B$ with gain $m_2\in\Z^2\rtimes\mathcal{C}_s$. See Figure~\ref{FigUnionGainBalanced}.
        \begin{figure}[H]
            \begin{center}
			\begin{tikzpicture}[scale=0.18]
				\node[vertex,label=above:$x$] (x) at (0,3.9) {};
				\node[vertex,label=below:$y$] (y) at (0,-3.9) {};
				\node[ivertex] (r) at (5,0) {};
				\node[ivertex] (l) at (-5,0) {};
				
                \draw[dedge] (x)to  node[left,labelsty]{$m_3$}(y);
				\draw[edge] (r)to node[right,labelsty]{$m_2$} (r);
				\draw[dedge] (y) to [in=270,out=0] (5,0) to  [in=0,out=90] (x);
				
				\draw[edge] (l)tonode[left,labelsty]{$m_1$} (l);
				\draw[dedge] (x) to [in=90,out=180] (-5,0) to  [in=180,out=270] (y);

				\draw(-7,0)circle(10);
				\draw(7,0)circle(10);
				\node[blankvertex] (GA) at (-14,0) {$G_A$};
				\node[blankvertex] (GB) at (14,0) {$G_B$};
			\end{tikzpicture}
		\end{center}
        \caption{An illustration of the paths through $G_A$ and $G_B$.}
        \label{FigUnionGainBalanced}
        \end{figure}
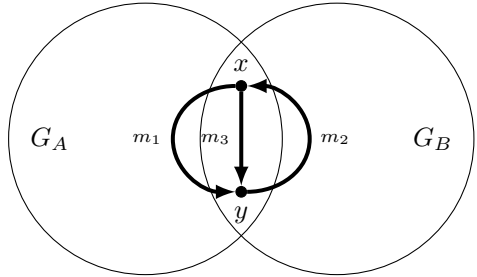
        
		Note that this closed walk has gain $m_1m_2$. Since $G_A\cap G_B$ is connected, there is a path $P_3$ in $G_A\cap G_B$ from $x$ to $y$. Suppose that this path has gain $m_3$ and insert it with its reverse into the middle of the closed walk $C$ to obtain the following walk:
		\begin{align*}
			x\xrightarrow{m_1}y\xrightarrow{{m_3}^{-1}}x\xrightarrow{m_3}y\xrightarrow{m_2}x.
		\end{align*}
		The net gain of this augmented walk is $m_1{m_3}^{-1}m_3m_2=m_1m_2$, which is the same as that of $C$. This walk can be split into two smaller closed walks: $P_1{P_3}^{-1}$ (with gain $m_1{m_3}^{-1}$) in $G_A$ and $P_3P_2$ (with gain $m_3m_2$) in $G_B$. Since $G_A$ and $G_B$ are purely periodic, $m_1{m_3}^{-1}\in\Z^2$ and $m_3m_2\in\Z^2$. Composing these shows that $m_1m_2\in\Z^2$. Hence, the net gain of the closed walk $C$ is in $\Z^2$.

        Inductively, this method can be used for any closed walk that moves between $G_A\backslash G_B$ and $G_B\backslash G_A$ multiple times. Each time the closed walk passes through a vertex in  $G_A\cap G_B$, augment a path back to the starting vertex in $G_A\cap G_B$, along with its reverse. This shows that each closed walk in $G_A\cup G_B$ has the same gain as the composition of some sequence of closed walks that are each in one of $G_A$ or $G_B$. Since $G_A$ and $G_B$ are purely periodic, each of these closed walks will have a net gain in $\Z^2$. The net gain on the full closed walk is the composition of the gains on these smaller closed walks, all of which are in $\Z^2$. Hence, $G_A\cup G_B$ is purely periodic.
    \end{proof}
    Now attention can be turned back to Proposition \ref{Degree3Neighbour3}, aiming to show that, in a $\Z^2\rtimes\mathcal{C}_s$-tight gain graph, any vertex of degree $3$ with $3$ distinct neighbours admits a $1$-reduction to a smaller $\Z^2\rtimes\mathcal{C}_s$-tight gain graph. The overall approach is similar to that used to prove the corresponding result for $\Z^2$-gain graphs \cite[Proposition 5.4]{Inductive}. First, consider the $(2,1)$-tight condition.
    \begin{lemma}\label{Condition1Once}
		Suppose that $(G,m)$ is a $\Z^2\rtimes\mathcal{C}_s$-tight gain graph that has a vertex $v_0$ of degree $3$ with edges $e_1$, $e_2$ and $e_3$ from $v_0$ to distinct vertices $v_1$, $v_2$ and $v_3$ respectively. Then at least two of $G-v_0+e_{12}$, $G-v_0+e_{23}$ or $G-v_0+e_{31}$ are $(2,1)$-tight.
	\end{lemma}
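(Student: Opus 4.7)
The plan is to reduce to the combinatorial machinery already established in Lemma \ref{subgraphs}. First I would observe that $G-v_0+e_{ij}$ is obtained from $G$ by deleting one vertex and three edges and then adding back one edge, so $|E(G-v_0+e_{ij})| = 2|V(G-v_0+e_{ij})|-1$. Hence each candidate satisfies the global $(2,1)$-count, and failure of $(2,1)$-tightness can only come from an over-count on a proper subgraph. Moreover, any subgraph of $G-v_0+e_{ij}$ that does not contain $e_{ij}$ is already a subgraph of $G$ and therefore $(2,1)$-sparse, so the only way $G-v_0+e_{ij}$ can fail to be $(2,1)$-tight is via the presence of a $(2,1)$-tight blocker in the sense of Definition \ref{Blockers3}: a $(2,1)$-tight subgraph of $G$ that contains both $v_i$ and $v_j$ but not $v_0$.

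Next I would argue by contradiction: suppose at least two of the three candidates fail to be $(2,1)$-tight. Up to relabelling, we may assume that both $G-v_0+e_{12}$ and $G-v_0+e_{23}$ admit $(2,1)$-tight blockers $G_{12}$ and $G_{23}$ respectively. By definition, $G_{12}$ is a $(2,1)$-tight subgraph of $G-v_0$ containing $\{v_1,v_2\}$, and $G_{23}$ is a $(2,1)$-tight subgraph of $G-v_0$ containing $\{v_2,v_3\}$. This directly contradicts Lemma \ref{subgraphs}(2), which forbids simultaneous existence of such subgraphs in a $(2,1)$-tight multigraph whose vertex $v_0$ has the three distinct neighbours $v_1,v_2,v_3$.

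There is essentially no obstacle here beyond correctly identifying the blocker formalism and invoking Lemma \ref{subgraphs}(2); the content of the lemma is really a restatement of the fact that the lattice of $(2,1)$-tight subgraphs of $G-v_0$ containing a given intersecting pair from $\{v_1,v_2,v_3\}$ is closed under union (Lemma \ref{Lattice}), which would merge two blockers into one forbidden $(2,1)$-tight subgraph containing all three neighbours of $v_0$. Thus at most one of the three candidate $1$-reductions can harbour a $(2,1)$-tight blocker, so at least two of $G-v_0+e_{12}$, $G-v_0+e_{23}$, $G-v_0+e_{31}$ are $(2,1)$-tight, completing the proof.
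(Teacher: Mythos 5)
Your proof is correct and follows essentially the same route as the paper, which invokes Lemma~\ref{subgraphs}(2) to bound the number of $(2,1)$-tight blockers by one; you simply spell out the blocker identification and the global edge count in more detail.
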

    \begin{proof}
        By Lemma \ref{subgraphs}(2), at most one of the possible $1$-reductions can have a $(2,1)$-tight blocker. Hence, at least two of these graphs are $(2,1)$-tight.
    \end{proof}
    There are now $2$ cases to consider: the case where all reductions give $(2,1)$-tight graphs and the case where exactly one of the reductions gives a graph that is not $(2,1)$-tight. We begin with the former case, where it remains to show that at least one of the reductions gives a graph that satisfies both the purely periodic condition and the balanced condition.
    \begin{lemma}\label{Reflective6}
		Suppose that $(G,m)$ is a $\Z^2\rtimes\mathcal{C}_s$-tight gain graph that has a vertex $v_0$ of degree $3$ with edges $e_1$, $e_2$ and $e_3$ from $v_0$ to distinct vertices $v_1$, $v_2$ and $v_3$ respectively. Suppose that $G-v_0+e_{12}$, $G-v_0+e_{23}$ and $G-v_0+e_{31}$ are all $(2,1)$-tight. Then at least two of them satisfy the purely periodic condition.
	\end{lemma}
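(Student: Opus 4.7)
I would argue by contradiction. Suppose two of the three reductions, say $G-v_0+e_{12}$ and $G-v_0+e_{23}$, both fail the purely periodic condition. Then there exist purely periodic blockers $G_{12}=(V_{12},E_{12})$ and $G_{23}=(V_{23},E_{23})$ as in Definition~\ref{Blockers3}; in particular each is a $(2,2)$-tight purely periodic subgraph of $G$ containing $\{v_1,v_2\}$ and $\{v_2,v_3\}$ respectively, and avoiding $v_0$. Note $v_2\in V_{12}\cap V_{23}$.

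Next I would apply Lemma~\ref{SubraphInclusion} with $l_A=l_B=2$ to obtain
\begin{equation*}
|E_{12}\cup E_{23}|+|E_{12}\cap E_{23}|=2|V_{12}\cup V_{23}|+2|V_{12}\cap V_{23}|-4.
\end{equation*}
Because $G_{12}\cap G_{23}$ is a subgraph of the purely periodic $G_{12}$, it is itself purely periodic, so by the purely periodic condition on $G$ we have $|E_{12}\cap E_{23}|\leq 2|V_{12}\cap V_{23}|-2$ (the case $|V_{12}\cap V_{23}|=1$ forces $|E_{12}\cap E_{23}|=0$, still matching the bound). On the other hand, $G_{12}\cup G_{23}$ together with $v_0$ and its three incident edges $e_1,e_2,e_3$ is a subgraph of $G$, so $(2,1)$-sparsity gives $|E_{12}\cup E_{23}|+3\leq 2(|V_{12}\cup V_{23}|+1)-1$, i.e.\ $|E_{12}\cup E_{23}|\leq 2|V_{12}\cup V_{23}|-2$. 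Substituting into the identity forces equality in both bounds, so $G_{12}\cap G_{23}$ is $(2,2)$-tight and $G_{12}\cup G_{23}$ satisfies $|E_{12}\cup E_{23}|=2|V_{12}\cup V_{23}|-2$.

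By Lemma~\ref{ConnectedTight} the intersection is connected, and Lemma~\ref{UnionGain} then yields that $G_{12}\cup G_{23}$ is purely periodic. Let $G'$ be obtained from $G_{12}\cup G_{23}$ by adding $v_0$ with the edges $e_1,e_2,e_3$; then $G'$ has $|V_{12}\cup V_{23}|+1$ vertices and $2|V_{12}\cup V_{23}|+1=2|V(G')|-1$ edges. I would now show $G'$ is purely periodic by checking that every closed walk based at $v_0$ has gain in $\Z^2$. Such a walk decomposes into excursions of the form $e_i\cdot P\cdot e_j^{-1}$, where $P$ is a walk in $G_{12}\cup G_{23}$ from $v_i$ to $v_j$. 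Since any two such walks differ by a closed walk in the purely periodic $G_{12}\cup G_{23}$, the $\mathcal{C}_s$-component of $m(P)$ is well-defined. For $(i,j)\in\{(1,2),(2,3)\}$ the blocker definition gives exactly the matching $\pi(m(P))=\pi(m(e_i)^{-1}m(e_j))$; for $(i,j)=(1,3)$ I use a concatenation of a $G_{12}$-path from $v_1$ to $v_2$ with a $G_{23}$-path from $v_2$ to $v_3$, so the $\mathcal{C}_s$-components multiply to $\pi(m(e_1)^{-1}m(e_3))$. Hence each excursion has gain in $\Z^2$ and $G'$ is purely periodic.

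But then $G'$ is a purely periodic subgraph of $G$ with $|E(G')|=2|V(G')|-1$, violating the purely periodic condition on $G$. This contradiction proves that at most one of $G-v_0+e_{12}$, $G-v_0+e_{23}$, $G-v_0+e_{31}$ fails the purely periodic condition, establishing the lemma. The only delicate point is verifying pure periodicity of $G'$; everything else is a double-counting exercise. The role of Lemma~\ref{UnionGain} — together with the $\mathcal{C}_s$-component constraint built into the definition of a purely periodic blocker — is precisely what lets the concatenated $v_1$-to-$v_3$ walk inherit the correct reflectional component, a phenomenon with no counterpart in the purely $\Z^2$ setting of \cite{Inductive}.
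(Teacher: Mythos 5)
Your proof is correct and follows essentially the same route as the paper's: both argue by contradiction, use the counting identity from Lemma~\ref{SubraphInclusion} together with the two sparsity bounds to force $G_{12}\cap G_{23}$ to be $(2,2)$-tight, invoke Lemmas~\ref{ConnectedTight} and~\ref{UnionGain} to conclude $G_{12}\cup G_{23}$ is purely periodic, and then show that adjoining $v_0$ with $e_1,e_2,e_3$ produces a $(2,1)$-tight purely periodic subgraph, contradicting the purely periodic condition on $G$. Your excursion-by-excursion verification that $(G_{12}\cup G_{23})+v_0$ is purely periodic is a slightly more explicit rendering of the paper's brief observation about matching $\mathcal{C}_s$-components of paths through $v_0$, but the underlying argument is the same.
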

    \begin{proof}
		Suppose that $G-v_0+e_{12}$ and $G-v_0+e_{23}$ both fail the purely periodic condition. This means that they have purely periodic blockers $G_{12}=(V_{12},E_{12})$ and $G_{23}=(V_{23},E_{23})$ respectively. Since purely periodic blockers are $(2,2)$-tight, Lemma \ref{Tight22Pair} shows that $|E_{12}\cup E_{23}|=2|V_{12}\cup V_{23}|-2$ and $|E_{12}\cap E_{23}|=2|V_{12}\cap V_{23}|-2$. Note that $G_{12}\cap G_{23}$ is $(2,2)$-tight and thus connected by Lemma \ref{ConnectedTight}. Hence, Lemma \ref{UnionGain} shows that $G_{12}\cup G_{23}$ is purely periodic. Moreover, adding $v_0$ with its incident edges to $G_{12}\cup G_{23}$ will give a purely periodic subgraph of $G$. To see this, observe that the $\mathcal{C}_s$-gain components of paths between neighbours of $v_0$ that pass through $v_0$ will match those of other paths between the neighbours. This shows that any closed walk that passes through $v_0$ has a net gain in $\Z^2$. Any closed walk that does not pass through $v_0$ is contained in $G_{12}\cup G_{23}$, which is purely periodic. Hence, $(G_{12}\cup G_{23})+v_0$ is purely periodic. Since $(G_{12}\cup G_{23})+v_0$ is $(2,1)$-tight, this contradicts the fact that $G$ satisfies the purely periodic condition. This contradiction can be reached for any pair of reductions. Hence, no more than one of the reductions gives a graph that fails the purely periodic condition.
	\end{proof}
    \begin{lemma}\label{Reflective7}
		Suppose that $(G,m)$ is a $\Z^2\rtimes\mathcal{C}_s$-tight gain graph that has a vertex $v_0$ of degree $3$ with edges $e_1$, $e_2$ and $e_3$ from $v_0$ to distinct vertices $v_1$, $v_2$ and $v_3$ respectively. Suppose that $G-v_0+e_{12}$, $G-v_0+e_{23}$ and $G-v_0+e_{31}$ are all $(2,1)$-tight. Then at least one of them satisfies the balanced condition.
	\end{lemma}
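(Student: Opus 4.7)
The plan is to argue by contradiction. Suppose that each of $G-v_0+e_{12}$, $G-v_0+e_{23}$ and $G-v_0+e_{31}$ fails the balanced condition, so we have three balanced blockers $G_{12}=(V_{12},E_{12})$, $G_{23}=(V_{23},E_{23})$ and $G_{31}=(V_{31},E_{31})$, each $(2,3)$-tight. The first step is to apply Lemma \ref{Tight23Pair} to a chosen pair, say $G_{12}$ and $G_{23}$, which yields three possible cases for $|V_{12}\cap V_{23}|$ and for the edge counts of the union and intersection. A direct check of the gain arithmetic around $v_0$ (using that any path in $G_{ij}$ from $v_i$ to $v_j$ has gain $(m(e_i))^{-1}m(e_j)$) shows that whenever $G_{12}\cup G_{23}$ is already balanced, adding $v_0$ with its three incident edges preserves balance, so $(G_{12}\cup G_{23})+v_0$ is a balanced subgraph of $G$ whose edge count must then be checked against the balanced condition.

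In Case 1 of Lemma \ref{Tight23Pair}, $G_{12}\cap G_{23}$ is $(2,3)$-tight and hence connected by Lemma \ref{ConnectedTight}, so Lemma \ref{UnionGainBalanced} makes $G_{12}\cup G_{23}$ balanced; the resulting $(G_{12}\cup G_{23})+v_0$ is balanced $(2,2)$-tight, which violates the balanced condition. In Case 2 of Lemma \ref{Tight23Pair}, the intersection $G_{12}\cap G_{23}$ is $(2,3)$-sparse with $|E|=2|V|-4$, so by Lemma \ref{ConnectedSmall} it is either connected or consists of two isolated vertices; when connected, the same argument as in Case 1 gives a balanced $(2,1)$-tight subgraph of $G$, again contradicting the balanced condition since $2|V|-1>2|V|-3$.

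The hard part will be the two degenerate sub-cases: Case 3 of Lemma \ref{Tight23Pair} where $V_{12}\cap V_{23}=\{v_2\}$, and the sub-case of Case 2 where $G_{12}\cap G_{23}$ is two isolated vertices. In both of these $G_{12}\cup G_{23}$ need not be balanced, and adding $v_0$ only produces a $(2,3)$-tight subgraph, which the balanced condition permits. For the two-isolated-vertex sub-case, I would mimic the argument of Lemma \ref{Degree3Neighbour2MatchLoop}: any closed walk in $(G_{12}\cup G_{23})+v_0$ reduces, via insertion of back-and-forth paths, to a composition of closed walks inside $G_{12}$ or $G_{23}$ together with one ``cross'' contribution of the form $\alpha\beta^{-1}$, where $\alpha,\beta$ are the (unique, by balance) path gains between the two common vertices in $G_{12}$ and $G_{23}$; I would then use the relations imposed by $e_1,e_2,e_3$ at $v_0$ to show this cross gain lies in $\Z^2$, making $(G_{12}\cup G_{23})+v_0$ purely periodic and $(2,1)$-tight, which violates the purely periodic condition. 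For Case 3, I would bring in the third balanced blocker $G_{31}$ and iterate Lemmas \ref{SubraphInclusion} and \ref{Tight23Pair} on the triple union $G_{12}\cup G_{23}\cup G_{31}$, using the forced $(2,1)$-sparsity of $G$ after attaching $v_0$ to rule out every configuration. The main obstacle is the careful book-keeping of edge counts, intersection structures and gain compatibilities across these degenerate sub-cases simultaneously; the routine Case 1/connected-Case 2 arguments are standard, but the disconnected-intersection analysis requires the more delicate gain bookkeeping inspired by Lemma \ref{Degree3Neighbour2MatchLoop}.
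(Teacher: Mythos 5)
Your outline is correct and matches the paper in the easy sub-cases (Case~1 of Lemma~\ref{Tight23Pair}, and the connected branch of Case~2), but your treatment of the remaining degenerate sub-cases diverges from the paper and, I believe, contains a genuine gap.

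For the sub-case of Case~2 where $G_{12}\cap G_{23}$ is two isolated vertices, you propose to mimic Lemma~\ref{Degree3Neighbour2MatchLoop} and show that $(G_{12}\cup G_{23})+v_0$ is purely periodic. That argument will not go through. In Lemma~\ref{Degree3Neighbour2MatchLoop} the conclusion that the cross-gain lies in $\Z^2$ rests on the hypothesis that $m(e_2)$ and $m(e_3)$ have the same $\mathcal{C}_s$-component, so that the two parallel edges at $v_0$ force every $v_1$-to-$v_2$ path to carry the same $\mathcal{C}_s$-component. No such hypothesis is present in Lemma~\ref{Reflective7}. More to the point, when $V_{12}\cap V_{23}=\{v_2,w\}$ for some vertex $w$, the cross-gain you want to control is $\alpha\beta^{-1}$, where $\alpha$ (resp.\ $\beta$) is the gain of the unique $v_2$-to-$w$ path in $G_{12}$ (resp.\ $G_{23}$). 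The edges $e_1,e_2,e_3$ at $v_0$ only constrain path gains between the \emph{neighbours} $v_1,v_2,v_3$, and the auxiliary vertex $w$ is in general unrelated to them, so the relations imposed at $v_0$ give no leverage on $\alpha\beta^{-1}$. The paper instead handles this sub-case by bringing in the third blocker $G_{31}$ and deriving a counting contradiction: if some pair among $\{G_{12}\cap G_{31},\,G_{23}\cap G_{31}\}$ carries an edge one re-runs the earlier argument on that pair, and otherwise one sets $G^*=G_{12}\cup G_{23}$ (a $(2,2)$-count subgraph), observes $|E^*\cap E_{31}|=0$ while $|V^*\cap V_{31}|>1$, and this contradicts Lemma~\ref{Count22Tight23}.

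Your "Case 3" sketch is also imprecise. Having $|V_{12}\cap V_{23}|=1$ does not by itself put you in the all-singleton situation; the paper's first case requires \emph{all three} pairwise intersections to be singletons, and if one of the other intersections has more than one vertex you must instead apply the Case~B analysis to that pair. You should restructure the case split so that you first ask whether any pair of blockers intersects in more than one vertex (and if so run the Tight23Pair analysis on that pair), and only when every pairwise intersection is a single vertex do you pass to the triple-union argument in which the closed walk through $v_1,v_2,v_3$ has trivial net gain.
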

    \begin{proof}
        For contradiction, suppose that all of these graphs fail the balanced condition. This means that the reductions to $G-v_0+e_{12}$, $G-v_0+e_{23}$ and $G-v_0+e_{31}$ have balanced blockers $G_{12}=(V_{12},E_{12})$, $G_{23}=(V_{23},E_{23})$ and $G_{31}=(V_{31},E_{31})$ respectively.

        To begin, consider the case where $|V_{12}\cap V_{23}|=|V_{23}\cap V_{31}|=|V_{31}\cap V_{12}|=1$, so $|E_{12}\cap E_{23}|=|E_{23}\cap E_{31}|=|E_{31}\cap E_{12}|=0$ and $|V_{12}\cap V_{23}\cap V_{31}|=0$. In this case, consider the graph $G_{12}\cup G_{23}\cup G_{31}$. A basic counting argument shows that $|E_{12}\cup E_{23}\cup E_{31}|=2|V_{12}\cup V_{23}\cup V_{31}|-3$. We now show that $G_{12}\cup G_{23}\cup G_{31}$ is balanced. Any closed walk that is entirely contained within one of $G_{12}$, $G_{23}$ or $G_{31}$ clearly has trivial net gain. Since each intersection of a pair of balanced blockers has just one vertex, the only other possibility that needs to be considered for a closed walk in $G_{12}\cup G_{23}\cup G_{31}$ is a closed walk that passes through each of $v_1$, $v_2$ and $v_3$. Such a closed walk takes the form illustrated in Figure \ref{FigReflective7}. 
        
        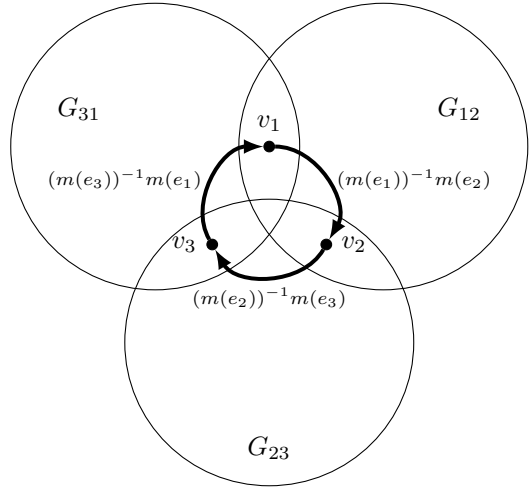
\begin{figure}[H]
            \begin{center}
			\begin{tikzpicture}[scale=0.5]
				\node[vertex,label=above:$v_1$] (v1) at (0,0) {};
				\node[vertex,label=right:$v_2$] (v2) at (1.5,-2.598076211) {};
				\node[vertex,label=left:$v_3$] (v3) at (-1.5,-2.598076211) {};

				\draw[dedge] (v1)to [in=60,out=0] node[right,labelsty]{$(m(e_1))^{-1}m(e_2)$}(v2);
				\draw[dedge] (v2)to [in=300,out=240] node[below,labelsty]{$(m(e_2))^{-1}m(e_3)$}(v3);
				\draw[dedge] (v3)to [in=180,out=120] node[left,labelsty]{$(m(e_3))^{-1}m(e_1)$}(v1);
				
				\draw(-3,0)circle(3.8);
				\draw(3,0)circle(3.8);
				\draw(0,-5.196152423)circle(3.8);
				
				\node[blankvertex](G12) at (5,1){$G_{12}$};
				\node[blankvertex](G12) at (-5,1){$G_{31}$};
				\node[blankvertex](G12) at (0,-8){$G_{23}$};
			\end{tikzpicture}
		\end{center}
        \caption{An illustration of the case where all pairs of balanced blockers intersect in exactly one vertex in the proof of Lemma \ref{Reflective7}.}
        \label{FigReflective7}
        \end{figure}

        Note that the net gain on this closed walk is trivial. Hence, $G_{12}\cup G_{23}\cup G_{31}$ is balanced and has a $(2,3)$-edge-count. Adding $v_0$ with its incident edges to $G_{12}\cup G_{23}\cup G_{31}$ gives a balanced subgraph of $G$ with a $(2,2)$-edge-count, which contradicts the fact that $G$ satisfies the balanced condition.

        Now consider the case where at least one of the pairwise intersections of the blockers has more than one vertex. W.l.o.g., suppose that $|V_{12}\cap V_{23}|>1$. Since balanced blockers are $(2,3)$-tight, Lemma \ref{Tight23Pair} gives the following possibilities for the edge counts of $G_{12}\cup G_{23}$ and $G_{12}\cap G_{23}$:
        \begin{enumerate}
			\item $|E_{12}\cup E_{23}|=2|V_{12}\cup V_{23}|-3$ and $|E_{12}\cap E_{23}|=2|V_{12}\cap V_{23}|-3$,
			\item $|E_{12}\cup E_{23}|=2|V_{12}\cup V_{23}|-2$ and $|E_{12}\cap E_{23}|=2|V_{12}\cap V_{23}|-4$.
		\end{enumerate}
        In case 1, observe that $G_{12}\cap G_{23}$ is $(2,3)$-tight and thus connected by Lemma \ref{ConnectedTight}. Lemma \ref{UnionGainBalanced} therefore shows that $G_{12}\cup G_{23}$ is balanced. Adding $v_0$ with its incident edges to $G_{12}\cup G_{23}$ then gives a balanced subgraph of $G$ with a $(2,2)$-edge-count. This contradicts the fact that $G$ satisfies the balanced condition.

        In case 2, Lemma \ref{ConnectedSmall} shows that $G_{12}\cap G_{23}$ is either connected or consists only of $2$ vertices and no edges. If $G_{12}\cap G_{23}$ is connected, then $G_{12}\cup G_{23}$ is balanced by Lemma \ref{UnionGainBalanced}. Since $G_{12}\cup G_{23}$ has a $(2,2)$-edge-count, this contradicts the fact that $G$ satisfies the balanced condition.

        This leaves the case where $G_{12}\cap G_{23}$ is the graph with $2$ vertices and no edges. If either of $G_{12}\cap G_{31}$ or $G_{23}\cap G_{31}$ contains at least one edge, then previous arguments can be applied on that pair of blockers to reach a contradiction. Hence, suppose that all of the pairwise intersections of the balanced blockers contain no edges. For ease, let $G^*=G_{12}\cup G_{23}=(V^*,E^*)$, so $|E^*|=2|V^*|-2$. Note that $G^*\cap G_{31}=(G_{12}\cap G_{31})\cup(G_{23}\cap G_{31})$. Since both $G_{12}\cap G_{31}$ and $G_{23}\cap G_{31}$ contain no edges, $|E^*\cap E_{31}|=0$. Since $v_1,v_3\in V^*\cap V_{31}$, it is clear that $|V^*\cap V_{31}|>1$. Hence, $|E^*\cap E_{31}|\neq2|V^*\cap V_{31}|-3$, which is a contradiction to Lemma \ref{Count22Tight23}.

        Every case leads to a contradiction. Hence, one of the candidate reductions gives a graph that satisfies the balanced condition.
    \end{proof}

    In the case where every reduction gives a $(2,1)$-tight graph, Lemma \ref{Reflective6} shows that at least two of the reductions give graphs that satisfy the purely periodic condition and Lemma \ref{Reflective7} shows that at least one gives a graph that satisfies the balanced condition. If two of the reductions give graphs that satisfy the balanced condition, then it is clear that at least one of these graphs also satisfies the purely periodic condition. It remains to consider the case where every reduction gives a $(2,1)$-tight graph and only one reduction gives a graph that satisfies the balanced condition. In this case, it must be shown that this graph also satisfies the purely periodic condition. The following lemma covers this.

    \begin{lemma}\label{Reflective5}
		Suppose that $(G,m)$ is a $\Z^2\rtimes\mathcal{C}_s$-tight gain graph that has a vertex $v_0$ of degree $3$ with edges $e_1$, $e_2$ and $e_3$ from $v_0$ to distinct vertices $v_1$, $v_2$ and $v_3$ respectively. Suppose that $G-v_0+e_{12}$, $G-v_0+e_{23}$ and $G-v_0+e_{31}$ are all $(2,1)$-tight. Also, suppose that $G-v_0+e_{12}$ and $G-v_0+e_{23}$ both fail the balanced condition. Then $G-v_0+e_{31}$ is $\Z^2\rtimes\mathcal{C}_s$-tight.
	\end{lemma}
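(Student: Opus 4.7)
The plan is to verify the three conditions of $\Z^2\rtimes\mathcal{C}_s$-tightness on $G-v_0+e_{31}$. The $(2,1)$-tightness is given, and the balanced condition follows immediately from Lemma~\ref{Reflective7}, which guarantees that at least one of the three candidate reductions satisfies the balanced condition; since the other two fail by hypothesis, $G-v_0+e_{31}$ must be the one that succeeds. So only the purely periodic condition requires work. The strategy is to assume for contradiction that $G-v_0+e_{31}$ admits a purely periodic blocker $G_{31}$ and then, together with the balanced blockers $G_{12}$ and $G_{23}$ (which are purely periodic, since balanced implies purely periodic), construct a subgraph of $G$ that, after adjoining $v_0$ and its three incident edges, violates one of $G$'s sparsity counts.

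First I would apply Lemma~\ref{Tight23Pair} to $G_{12}\cup G_{23}$. In case (a), and in case (b) when $G_{12}\cap G_{23}$ is connected (possible by Lemma~\ref{ConnectedSmall}), Lemma~\ref{UnionGainBalanced} makes $G_{12}\cup G_{23}$ balanced, and adjoining $v_0$ produces a balanced subgraph of $G$ with $2|V|-2$ or $2|V|-1$ edges respectively, violating the balanced condition of $G$ without needing $G_{31}$. The two remaining cases are case (c), $V_{12}\cap V_{23}=\{v_2\}$, and the disconnected sub-case of (b), where $V_{12}\cap V_{23}=\{v_2, w\}$ with no shared edges.

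In either surviving case, I would introduce $G_{31}$. If $|V_{12}\cap V_{31}|\geq 2$, then Lemma~\ref{SubraphInclusion} combined with the $(2,3)$-tightness of $G_{12}$ and the bound from Lemma~\ref{subgraphs}(1) (applicable since $G_{12}\cup G_{31}$ contains all three of $v_1, v_2, v_3$) forces $|E_{12}\cup E_{31}|=2|V_{12}\cup V_{31}|-2$ and $|E_{12}\cap E_{31}|=2|V_{12}\cap V_{31}|-3$, so $G_{12}\cap G_{31}$ is $(2,3)$-tight, hence connected by Lemma~\ref{ConnectedTight}, and Lemma~\ref{UnionGain} gives $G_{12}\cup G_{31}$ purely periodic. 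Adjoining $v_0$ preserves pure periodicity — on any closed walk through $v_0$ entering along $e_i$ and leaving along $e_j$, the $\mathcal{C}_s$-components of $m(e_i)^{-1}m(e_j)$ and of the forced $v_j\to v_i$ path gain $m(e_j)^{-1}m(e_i)$ cancel — and yields a $(2,1)$-count purely periodic subgraph of $G$, contradicting the purely periodic condition of $G$. The symmetric case $|V_{23}\cap V_{31}|\geq 2$ is handled analogously.

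In the final sub-case, $V_{12}\cap V_{31}=\{v_1\}$ and $V_{23}\cap V_{31}=\{v_3\}$, so all pairwise intersections with $G_{31}$ contain no edges. Setting $H=G_{12}\cup G_{23}\cup G_{31}$ and applying triple inclusion-exclusion (using $|E_{12}|=2|V_{12}|-3$, $|E_{23}|=2|V_{23}|-3$, $|E_{31}|=2|V_{31}|-2$ and the vanishing of all pairwise edge-intersections) yields $|E_H|=2|V_H|$ in the disconnected sub-case of (b), directly violating the $(2,1)$-sparsity of $G$ and ruling that sub-case out; in case (c) it gives $|E_H|=2|V_H|-2$. The main obstacle is to show that $H$ is purely periodic in case (c), since the intersection $(G_{12}\cup G_{23})\cap G_{31}=\{v_1, v_3\}$ is disconnected and Lemma~\ref{UnionGain} does not apply directly. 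I would handle this by a direct walk decomposition: every closed walk in $H$ is a concatenation of closed walks supported in a single blocker (each purely periodic, hence with trivial $\mathcal{C}_s$-component) and of ``triangular'' walks running $v_1\to v_2$ in $G_{12}$, $v_2\to v_3$ in $G_{23}$, and $v_3\to v_1$ in $G_{31}$, whose $\mathcal{C}_s$-components are the forced $\mathcal{C}_s$-components of $m(e_1)^{-1}m(e_2)$, $m(e_2)^{-1}m(e_3)$, and $m(e_3)^{-1}m(e_1)$, which telescope to the identity. Adjoining $v_0$ to $H$ then produces a $(2,1)$-count purely periodic subgraph of $G$, delivering the final contradiction and completing the proof.
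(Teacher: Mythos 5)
Your proposal is correct and takes essentially the same approach as the paper: reduce to the purely periodic condition via Lemma~\ref{Reflective7}, take blockers $G_{12}$, $G_{23}$, $G_{31}$, and derive a contradiction from the sparsity and gain-space lemmas (Lemmas~\ref{SubraphInclusion}, \ref{ConnectedTight}, \ref{ConnectedSmall}, \ref{Tight23Pair}, \ref{UnionGainBalanced}, \ref{UnionGain}). The only difference is bookkeeping: you exhaust the possible shapes of $G_{12}\cup G_{23}$ first and then introduce $G_{31}$, and you re-derive the counts of Lemma~\ref{Count22Tight23} and Lemma~\ref{Tight22Pair} inline (the latter as a triple inclusion-exclusion giving $|E_H|=2|V_H|$), whereas the paper splits on which pairwise intersection with $G_{31}$ is large and cites those lemmas directly.
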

    \begin{proof}
        By Lemma \ref{Reflective7}, the fact that $G-v_0+e_{12}$ and $G-v_0+e_{23}$ both fail the balanced condition implies that $G-v_0+e_{31}$ satisfies the balanced condition. It therefore remains only to show that $G-v_0+e_{31}$ satisfies the purely periodic condition.

        For contradiction, suppose that $G-v_0+e_{31}$ fails the purely periodic condition due to a purely periodic blocker $G_{31}$. Also, suppose that $G_{12}$ and $G_{23}$ are balanced blockers that cause $G-v_0+e_{12}$ and $G-v_0+e_{23}$ respectively to fail the balanced condition.

        First, consider the case where $|V_{12}\cap V_{23}|=|V_{23}\cap V_{31}|=|V_{31}\cap V_{12}|=1$, so $|E_{12}\cap E_{23}|=|E_{23}\cap E_{31}|=|E_{31}\cap E_{12}|=0$ and $|V_{12}\cap V_{23}\cap V_{31}|=0$. A basic counting argument shows that $|E_{12}\cup E_{23}\cup E_{31}|=2|V_{12}\cup V_{23}\cup V_{31}|-2$. By a similar method to that which was used in the first part of the proof of Lemma \ref{Reflective7}, it can then be shown that $G_{12}\cup G_{23}\cup G_{31}$ is purely periodic. Adding $v_0$ with its incident edges to $G_{12}\cup G_{23}\cup G_{31}$ gives a $(2,1)$-tight purely periodic subgraph of $G$, which is a contradiction to the purely periodic condition.

        Now consider the case where $|V_{12}\cap V_{31}|>1$. Since $G_{12}$ is $(2,3)$-tight and $G_{31}$ is $(2,2)$-tight, Lemma \ref{Count22Tight23} shows that $|E_{12}\cup E_{31}|=2|V_{12}\cup V_{31}|-2$ and $|E_{12}\cap E_{31}|=2|V_{12}\cap V_{31}|-3$. Since $G_{12}\cap G_{31}$ is $(2,3)$-tight, combining Lemmas \ref{ConnectedTight} and \ref{UnionGain} shows that $G_{12}\cup G_{31}$ is purely periodic. Adding $v_0$ with its incident edges to $G_{12}\cup G_{31}$ gives a $(2,1)$-tight purely periodic subgraph of $G$, which is a contradiction to the purely periodic condition. The same method also gives a contradiction in the case where $|V_{23}\cap V_{31}|>1$.

        The only case left to consider for this proof is that where $|V_{12}\cap V_{31}|=|V_{23}\cap V_{31}|=1$ and $|V_{12}\cap V_{23}|>1$. Given this, consider the edge count of $G_{12}\cup G_{23}$. Lemma \ref{Tight23Pair} shows that one of the following holds:
		\begin{enumerate}
			\item $|E_{12}\cup E_{23}|=2|V_{12}\cup V_{23}|-3$ and $|E_{12}\cap E_{23}|=2|V_{12}\cap V_{23}|-3$,
			\item $|E_{12}\cup E_{23}|=2|V_{12}\cup V_{23}|-2$ and $|E_{12}\cap E_{23}|=2|V_{12}\cap V_{23}|-4$.
		\end{enumerate}
        In case 1, $G_{12}\cap G_{23}$ is $(2,3)$-tight, so combining Lemmas \ref{ConnectedTight} and \ref{UnionGainBalanced} shows that $G_{12}\cup G_{23}$ is balanced. Adding $v_0$ with its incident edges to $G_{12}\cup G_{23}$ gives a balanced subgraph of $G$ with a $(2,2)$-edge-count. This contradicts the fact that $G$ satisfies the balanced condition.

        Now consider case 2. For ease of notation, let $G^*=G_{12}\cup G_{23}=(V^*,E^*)$, so $|E^*|=2|V^*|-2$. Since $V_{12}\cap V_{31}=\{v_1\}$ and $V_{23}\cap V_{31}=\{v_3\}$, it can be seen that $|V^*\cap V_{31}|=2$ and $|E^*\cap E_{31}|=0$. This means that $|E^*\cap E_{31}|\neq2|V^*\cap V_{31}|-2$, which contradicts Lemma \ref{Tight22Pair}.

        Every case leads to a contradiction. Hence, $G-v_0+e_{31}$ satisfies the purely periodic condition and is thus $\Z^2\rtimes\mathcal{C}_s$-tight.
    \end{proof}
    For the proof of Proposition \ref{Degree3Neighbour3}, combining Lemmas \ref{Reflective6}, \ref{Reflective7} and \ref{Reflective5} completes the case where every reduction gives a $(2,1)$-tight graph, by showing that one of the reductions gives a graph that also satisfies the balanced condition and the purely periodic condition.

	We now consider the case where one of the reductions gives a graph that is not $(2,1)$-tight. The aim is to show that one of the other reductions gives a $\Z^2\rtimes\mathcal{C}_s$-tight gain graph. We begin by considering the purely periodic condition.
    \begin{lemma}\label{Fail1Pass3}
		Suppose that $(G,m)$ is a $\Z^2\rtimes\mathcal{C}_s$-tight gain graph that has a vertex $v_0$ of degree $3$ with edges $e_1$, $e_2$ and $e_3$ from $v_0$ to distinct vertices $v_1$, $v_2$ and $v_3$ respectively. Suppose that $G-v_0+e_{12}$ is not $(2,1)$-tight. Then both $G-v_0+e_{23}$ and $G-v_0+e_{31}$ satisfy the purely periodic condition.
	\end{lemma}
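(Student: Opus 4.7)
The plan is to reduce the statement almost immediately to Lemma~\ref{subgraphs}(3).

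First I would unpack the hypothesis. The failure of $(2,1)$-tightness of $G-v_0+e_{12}$ must come from a subgraph containing the new edge $e_{12}$ (any other subgraph lies inside $G$, which is already $(2,1)$-sparse). Deleting $e_{12}$ from any such violating subgraph then yields a subgraph $G_{12}=(V_{12},E_{12})$ of $G-v_0$ with $v_1,v_2\in V_{12}$ and $|E_{12}|\geq 2|V_{12}|-1$. The $(2,1)$-sparsity of $G$ forces equality, so $G_{12}$ is a $(2,1)$-tight blocker in the sense of Definition~\ref{Blockers3}.

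Next I would argue by contradiction for each of the other two conditions. Suppose $G-v_0+e_{23}$ fails the purely periodic condition. As in the previous paragraph, the violating subgraph must contain $e_{23}$, and deleting $e_{23}$ (which can only shrink the gain spaces, hence preserves pure periodicity) yields a purely periodic subgraph $G_{23}=(V_{23},E_{23})$ of $G-v_0$ with $v_2,v_3\in V_{23}$ and, by $(2,1)$-sparsity of $G$, with $|E_{23}|=2|V_{23}|-2$; that is, $G_{23}$ is $(2,2)$-tight. But then $G-v_0$ contains a $(2,1)$-tight subgraph on a set containing $\{v_1,v_2\}$ and simultaneously a $(2,2)$-tight subgraph on a set containing $\{v_2,v_3\}$, directly contradicting Lemma~\ref{subgraphs}(3). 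The identical argument with the roles of $v_2$ and $v_3$ interchanged (applied to the pair $\{v_1,v_3\}$) handles $G-v_0+e_{31}$.

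I expect no real obstacle here: the only point requiring any care is the observation that removing $e_{ij}$ from a purely periodic subgraph keeps it purely periodic, and that the resulting edge count is exactly $2|V|-2$ (rather than possibly larger), which both follow from sparsity of $G$ and monotonicity of the gain space under deletion. With those in hand, the conclusion is a one-line application of Lemma~\ref{subgraphs}(3).
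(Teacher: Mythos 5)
Your proof is correct and follows the paper's route exactly: exhibit the $(2,1)$-tight blocker $G_{12}$ implied by the failure of $(2,1)$-tightness of $G-v_0+e_{12}$, then apply Lemma~\ref{subgraphs}(3) to rule out a $(2,2)$-tight purely periodic subgraph of $G-v_0$ on either of the other two pairs of neighbours. One small correction to the justification: the equality $|E_{23}|=2|V_{23}|-2$ does not follow from $(2,1)$-sparsity of $G$ (that only gives $|E_{23}|\le 2|V_{23}|-1$); the needed upper bound comes from the purely periodic condition of $\Z^2\rtimes\mathcal{C}_s$-tightness applied to the purely periodic subgraph $G_{23}\subseteq G$, and that same condition applied to subgraphs of $G_{23}$ (themselves purely periodic by gain-space monotonicity under edge deletion) is what makes $G_{23}$ genuinely $(2,2)$-tight rather than merely satisfying the overall count, which is what Lemma~\ref{subgraphs}(3) requires.
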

    \begin{proof}
		Since $G-v_0+e_{12}$ is not $(2,1)$-tight, it has a $(2,1)$-tight blocker $G_{12}$. By Lemma \ref{subgraphs}(3), there are no purely periodic blockers for the reductions to $G-v_0+e_{23}$ or $G-v_0+e_{31}$. Hence, both of these graphs satisfy the purely periodic condition.
	\end{proof}
    To complete the case where $G-v_0+e_{12}$ is not $(2,1)$-tight, it now remains only to show that one of $G-v_0+e_{23}$ or $G-v_0+e_{31}$ satisfies the balanced condition. In order to prove this, the following combinatorial lemmas will be needed.
	\begin{lemma}\label{Sparse23Pair}
		Let $G_A=(V_A,E_A)$ and $G_B=(V_B,E_B)$ be $(2,3)$-sparse multigraphs with $|V_A\cap V_B|=1$. Then $G_A\cup G_B$ is $(2,3)$-sparse.
	\end{lemma}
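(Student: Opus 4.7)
The plan is to verify the defining inequality of $(2,3)$-sparsity for every subgraph of $G_A \cup G_B$ that contains at least one edge, by splitting such a subgraph into its parts in $G_A$ and in $G_B$ and applying the hypothesis to each.

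Let $H=(V_H,E_H)$ be any subgraph of $G_A\cup G_B$ with $|E_H|\geq 1$. I would define $H_A=(V_H\cap V_A,\,E_H\cap E_A)$ and $H_B=(V_H\cap V_B,\,E_H\cap E_B)$. Because $|V_A\cap V_B|=1$, no edge (other than possibly loops at the unique shared vertex, which may be assigned arbitrarily to one side) lies in both $E_A$ and $E_B$, so $\{E_H\cap E_A,\,E_H\cap E_B\}$ is essentially a partition of $E_H$, giving $|E_H|=|E(H_A)|+|E(H_B)|$. Similarly $|V_H|=|V(H_A)|+|V(H_B)|-|V(H_A)\cap V(H_B)|$ with $|V(H_A)\cap V(H_B)|\leq 1$.

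First I would dispose of the degenerate cases: if $H_A$ has no edges then $H\subseteq G_B$ up to isolated vertices, so $|E_H|=|E(H_B)|\leq 2|V(H_B)|-3\leq 2|V_H|-3$, and symmetrically if $H_B$ is edgeless. Otherwise both $H_A$ and $H_B$ contain an edge, so the $(2,3)$-sparsity of $G_A$ and $G_B$ gives $|E(H_A)|\leq 2|V(H_A)|-3$ and $|E(H_B)|\leq 2|V(H_B)|-3$. Adding these,
\begin{align*}
|E_H| \;\leq\; 2\bigl(|V(H_A)|+|V(H_B)|\bigr)-6 \;=\; 2|V_H|+2|V(H_A)\cap V(H_B)|-6 \;\leq\; 2|V_H|-4,
\end{align*}
which is stronger than what is needed.

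I do not expect any genuine obstacle here; the content of the lemma is really just the inclusion–exclusion bookkeeping enabled by $|V_A\cap V_B|=1$. The only thing to be mildly careful about is the handling of loops at the shared vertex and the convention about multi-edges in $G_A\cup G_B$, but either convention only makes the inequality easier, so the argument sketched above is robust.
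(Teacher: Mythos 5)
Your argument is correct and follows essentially the same route as the paper's: decompose an arbitrary subgraph of $G_A\cup G_B$ into its $G_A$-part and $G_B$-part, dispatch the case where one part is edgeless, and otherwise add the two $(2,3)$-sparsity inequalities and use $|V_A\cap V_B|=1$, $|E_A\cap E_B|=0$ to get the bound $2|V_H|-4$. Your caveat about loops at the shared vertex is in fact vacuous here, since a $(2,3)$-sparse multigraph can never contain a loop (a loop gives a subgraph with one vertex and one edge, violating $1\leq 2\cdot 1-3$), so $E_A\cap E_B=\emptyset$ holds unconditionally.
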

    \begin{proof}
		Since $G_A$ and $G_B$ are $(2,3)$-sparse, it is clear that any subgraph of $G_A\cup G_B$ that has its edge set contained entirely within one of $G_{A}$ or $G_{B}$  is $(2,3)$-sparse. Now consider a subgraph $G'_{A}\cup G'_{B}\subseteq G_A\cup G_B$, where $G'_{A}=(V'_A,E'_A)\subseteq G_{A}$ and $G'_{B}=(V'_B,E'_B)\subseteq G_{B}$, both of which have at least one edge. Since $G_A$ and $G_B$ are $(2,3)$-sparse, $|E'_{A}|\leq2|V'_A|-3$ and $|E'_B|\leq2|V'_B|-3$. Using this, consider the edge count of $G'_A\cup G'_B$:
		\begin{align*}
			|E'_A\cup E'_B|+|E'_A\cap E'_B|&=|E'_A|+|E'_B|\\
			&\leq(2|V'_A|-3)+(2|V'_B|-3)\\
			&=2|V'_A|+2|V'_B|-6\\
			&=2|V'_A\cup V'_B|+2|V'_A\cap V'_B|-6.
		\end{align*}
		Note that $|V'_A\cap V'_B|\leq|V_A\cap V_{B}|=1$ and $|E'_A\cap E'_B|\leq|E_A\cap E_B|=0$. Hence, $|E'_A\cup E'_B|\leq2|V'_A\cup V'_B|-4$. This shows that $G_A\cup G_B$ is $(2,3)$-sparse.
	\end{proof}
    \begin{lemma}\label{Sparse23PairSmall}
		Let $G_A=(V_A,E_A)$ and $G_B=(V_B,E_B)$ be $(2,3)$-sparse multigraphs with $|V_A\cap V_B|=2$ and $|E_A\cap E_B|=0$. Then $G_A\cup G_B$ is $(2,2)$-sparse.
	\end{lemma}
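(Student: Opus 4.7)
The plan is to mimic the calculation in Lemma~\ref{Sparse23Pair}, but now exploiting the extra slack that comes from $|E_A\cap E_B|=0$ when $|V_A\cap V_B|=2$. Since $(2,2)$-sparsity is a condition on every subgraph with at least one edge, I would fix an arbitrary subgraph $G'=(V',E')$ of $G_A\cup G_B$ with $|E'|\ge 1$ and decompose its edge set as $E'_A=E'\cap E_A$ and $E'_B=E'\cap E_B$; because $E_A\cap E_B=\emptyset$, this is a disjoint union, so $|E'|=|E'_A|+|E'_B|$. Let $V'_A$ be the vertices incident to $E'_A$ and $V'_B$ the vertices incident to $E'_B$. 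Since any isolated vertex of $G'$ only strengthens the desired inequality, I may assume $V'=V'_A\cup V'_B$, and then $V'_A\cap V'_B\subseteq V_A\cap V_B$, so $|V'_A\cap V'_B|\le 2$.

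Next I would split into cases on whether both $E'_A$ and $E'_B$ are non-empty. If one of them, say $E'_B$, is empty, then $G'$ is itself a subgraph of $G_A$, and $(2,3)$-sparsity of $G_A$ gives $|E'|\le 2|V'|-3\le 2|V'|-2$, which is stronger than needed. If instead both $E'_A$ and $E'_B$ are non-empty, the $(2,3)$-sparsity of $G_A$ and $G_B$ applied to $(V'_A,E'_A)$ and $(V'_B,E'_B)$ gives $|E'_A|\le 2|V'_A|-3$ and $|E'_B|\le 2|V'_B|-3$. Adding these and using $|V'_A|+|V'_B|=|V'_A\cup V'_B|+|V'_A\cap V'_B|=|V'|+|V'_A\cap V'_B|$ yields
\begin{align*}
    |E'|=|E'_A|+|E'_B|\le 2|V'|+2|V'_A\cap V'_B|-6\le 2|V'|+4-6=2|V'|-2,
\end{align*}
which is exactly the $(2,2)$-sparsity bound.

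There is no real obstacle here; the lemma is a purely counting statement and the only input beyond the arithmetic is the uniform bound $|V'_A\cap V'_B|\le|V_A\cap V_B|=2$, which is what distinguishes this from Lemma~\ref{Sparse23Pair} (where the corresponding bound was $1$ and yielded $(2,3)$-sparsity rather than only $(2,2)$). The only minor care needed is the bookkeeping around edgeless pieces and isolated vertices, which I handle by absorbing isolated vertices into $V'_A\cup V'_B$ before applying the sparsity counts.
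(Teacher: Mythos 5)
Your proof is correct and follows essentially the same approach as the paper's: decompose an arbitrary subgraph of $G_A\cup G_B$ into its $G_A$-part and $G_B$-part, apply $(2,3)$-sparsity to each side, and use $|V'_A\cap V'_B|\le 2$ together with $|E'_A\cap E'_B|=0$ to absorb the slack. The only difference is that you spell out the decomposition of an arbitrary subgraph and the handling of isolated vertices more explicitly, which the paper leaves implicit.
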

	\begin{proof}
		Again, it is clear that any subgraph of $G_A\cup G_B$ that has its edge set contained entirely within one of $G_{A}$ or $G_{B}$  is $(2,3)$-sparse. As before, consider a subgraph $G'_{A}\cup G'_{B}\subseteq G_A\cup G_B$, where $G'_{A}=(V'_A,E'_A)\subseteq G_{A}$ and $G'_{B}=(V'_B,E'_B)\subseteq G_{B}$, both of which have at least one edge. The same calculation from the proof of Lemma \ref{Sparse23Pair} gives
		\begin{align*}
			|E'_A\cup E'_B|+|E'_A\cap E'_B|\leq2|V'_A\cup V'_B|+2|V'_A\cap V'_B|-6.
		\end{align*}
		Note that $|V'_A\cap V'_B|\leq|V_A\cap V_{B}|=2$ and $|E'_A\cap E'_B|\leq|E_A\cap E_B|=0$. Hence, $|E'_A\cup E'_B|\leq2|V'_A\cup V'_B|-2$. This shows that $G_A\cup G_B$ is $(2,2)$-sparse.
	\end{proof}
    Now it is possible to complete the last step in the proof of Proposition \ref{Degree3Neighbour3}. This involves showing that, when one of the $1$-reductions gives a graph that is not $(2,1)$-tight, one of the other $1$-reductions gives a graph that satisfies the balanced condition.
    \begin{lemma}\label{Reflective8}
		Suppose that $(G,m)$ is a $\Z^2\rtimes\mathcal{C}_s$-tight gain graph that has a vertex $v_0$ of degree $3$ with edges $e_1$, $e_2$ and $e_3$ from $v_0$ to distinct vertices $v_1$, $v_2$ and $v_3$ respectively. Suppose that $G-v_0+e_{12}$ is not $(2,1)$-tight. Then at least one of $G-v_0+e_{23}$ or $G-v_0+e_{31}$ satisfies the balanced condition.
	\end{lemma}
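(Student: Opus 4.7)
The plan is to argue by contradiction. Suppose both $G-v_0+e_{23}$ and $G-v_0+e_{31}$ fail the balanced condition, so they admit balanced blockers $G_{23}=(V_{23},E_{23})$ and $G_{31}=(V_{31},E_{31})$. Together with the $(2,1)$-tight blocker $G_{12}=(V_{12},E_{12})$ whose existence is guaranteed by the hypothesis, I have three subgraphs of $G$ to amalgamate; the aim is to make some union of them overshoot one of the sparsity bounds of Definition~\ref{DefReflective}, contradicting the $\Z^2\rtimes\mathcal{C}_s$-tightness of $G$.

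The first step is to pin down the intersections with $G_{12}$. If $|V_{12}\cap V_{23}|\geq 2$, then the $(2,3)$-sparsity of $G_{23}$ on the intersection combined with Lemma~\ref{SubraphInclusion} forces $|E_{12}\cup E_{23}|\geq 2|V_{12}\cup V_{23}|-1$; since $v_0\notin V_{12}\cup V_{23}$ while $v_1,v_2,v_3\in V_{12}\cup V_{23}$, attaching $v_0$ with $e_1,e_2,e_3$ violates $(2,1)$-sparsity of $G$. Hence $V_{12}\cap V_{23}=\{v_2\}$, and symmetrically $V_{12}\cap V_{31}=\{v_1\}$. The second step splits on $|V_{23}\cap V_{31}|$. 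When $|V_{23}\cap V_{31}|=1$, Lemma~\ref{Sparse23Pair} gives that $G_{23}\cup G_{31}$ is $(2,3)$-sparse with $|E_{23}\cup E_{31}|=2|V_{23}\cup V_{31}|-4$; combining with $G_{12}$ via Lemma~\ref{SubraphInclusion}, using $|V_{12}\cap(V_{23}\cup V_{31})|=2$ and $|E_{12}\cap(E_{23}\cup E_{31})|=0$ from step~$1$, yields $|E_{12}\cup E_{23}\cup E_{31}|=2|V_{12}\cup V_{23}\cup V_{31}|-1$, which breaks $(2,1)$-sparsity after attaching $v_0$. When $|V_{23}\cap V_{31}|>1$, Lemma~\ref{Tight23Pair} supplies two subcases: either $G_{23}\cup G_{31}$ is $(2,3)$-tight with connected intersection by Lemma~\ref{ConnectedTight}, hence balanced union by Lemma~\ref{UnionGainBalanced}, or $|E_{23}\cup E_{31}|=2|V_{23}\cup V_{31}|-2$ and Lemma~\ref{ConnectedSmall} says the intersection is either connected (giving a balanced subgraph with $(2,2)$-count, directly contradicting the balanced condition of $G$) or equal to two isolated vertices (which by Lemma~\ref{Sparse23PairSmall} confirms the $(2,2)$-count of the union). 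The two remaining subcases are both settled by combining the union with $G_{12}$: since step~$1$ forces $|V^*\cap V_{12}|=2$ and $|E^*\cap E_{12}|=0$ for $G^*=G_{23}\cup G_{31}$, inclusion-exclusion pushes $|E^*\cup E_{12}|$ up to $2|V^*\cup V_{12}|$ or $2|V^*\cup V_{12}|+1$, immediately violating $(2,1)$-sparsity of $G$.

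The main obstacle is the very last subcase, where $G_{23}\cap G_{31}$ consists of two isolated vertices: Lemma~\ref{UnionGainBalanced} no longer applies, so the gain structure of $G_{23}\cup G_{31}$ cannot be exploited directly. It is precisely the singleton intersections $V_{12}\cap V_{23}=\{v_2\}$ and $V_{12}\cap V_{31}=\{v_1\}$ from step~$1$ that save the argument, by forcing the edge intersection $E_{12}\cap(E_{23}\cup E_{31})$ to be empty and allowing the final inclusion-exclusion to breach $(2,1)$-sparsity on its own.
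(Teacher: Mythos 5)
Your proposal is correct and reaches the same conclusion as the paper, but by a genuinely different decomposition. The paper's argument works ``outwards'': it first forms $G^*=G_{23}\cup G_{31}$, applies Lemma~\ref{Tight23Pair} to get three edge-count cases, and in each case either derives a balanced or purely-periodic violation after appending $v_0$, or (in the $|V_{23}\cap V_{31}|=1$ case) intersects $G^*$ with $G_{12}$, uses Lemmas~\ref{Sparse23Pair}, \ref{subgraphs}(1) and~\ref{ConnectedTight} to extract a connected $(2,3)$-tight intersection, and argues that the resulting $v_1$--$v_2$ path must pass through $v_3$, which forces $v_3\in V_{12}$ and contradicts Lemma~\ref{subgraphs}(1). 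Your argument instead works ``inwards'' by first pinning down $V_{12}\cap V_{23}=\{v_2\}$ and $V_{12}\cap V_{31}=\{v_1\}$ (a fact the paper never isolates; it is a sharper version of the count underlying Lemma~\ref{subgraphs}(3), since you apply the same inclusion-exclusion to a $(2,1)$-tight/$(2,3)$-tight pair rather than a $(2,1)$-tight/$(2,2)$-tight pair). Once those singleton intersections and hence $|E_{12}\cap E^*|=0$, $|V_{12}\cap V^*|=2$ are established, all the sub-cases except the connected-$(2,2)$-count one collapse into a single inclusion-exclusion with $G_{12}$ that overshoots $(2,1)$-sparsity directly, without invoking the path-through-$v_3$ argument or Lemma~\ref{subgraphs}(3). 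Your route thus trades the gain-theoretic and connectivity machinery of the paper's Case~3 for one extra purely combinatorial preliminary step; it is somewhat more uniform and arguably more transparent, whereas the paper's route avoids explicitly proving the singleton-intersection fact. Both are sound.
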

    \begin{proof}
        Since $G-v_0+e_{12}$ is not $(2,1)$-tight, it has a $(2,1)$-tight blocker $G_{12}$. For contradiction, suppose that both $G-v_0+e_{23}$ and $G-v_0+e_{31}$ fail the balanced condition. This would mean that their reductions have balanced blockers $G_{23}$ and $G_{31}$ respectively. Lemma \ref{Tight23Pair} shows that one of the following holds:
		\begin{enumerate}
			\item $|V_{23}\cap V_{31}|>1$ and $|E_{23}\cup E_{31}|=2|V_{23}\cup V_{31}|-3$ and $|E_{23}\cap E_{31}|=2|V_{23}\cap V_{31}|-3$,
			\item $|V_{23}\cap V_{31}|>1$ and $|E_{23}\cup E_{31}|=2|V_{23}\cup V_{31}|-2$ and $|E_{23}\cap E_{31}|=2|V_{23}\cap V_{31}|-4$,
			\item $|V_{23}\cap V_{31}|=1$ and $|E_{23}\cup E_{31}|=2|V_{23}\cup V_{31}|-4$.
		\end{enumerate}
        In case 1, $G_{23}\cap G_{31}$ is $(2,3)$-tight. Lemmas \ref{ConnectedTight} and \ref{UnionGainBalanced} therefore show that $G_{23}\cup G_{31}$ is balanced. Adding $v_0$ with its incident edges to $G_{23}\cup G_{31}$ gives a balanced subgraph of $G$ with a $(2,2)$-edge-count, which contradicts the fact that $G$ satisfies the balanced condition. Hence, case 1 is not possible.

        Now consider case 2. By Lemma \ref{ConnectedSmall}, $G_{23}\cap G_{31}$ is either connected or is formed of two vertices and no edges. If $G_{23}\cap G_{31}$ is connected, then Lemma \ref{UnionGainBalanced} shows that $G_{23}\cup G_{31}$ is balanced, which breaks the balanced condition on $G$. If $G_{23}\cap G_{31}$ consists of two vertices and no edges, then Lemma \ref{Sparse23PairSmall} shows that $G_{23}\cup G_{31}$ is $(2,2)$-tight. However, this, combined with the existence of the $(2,1)$-tight blocker $G_{12}$, contradicts Lemma \ref{subgraphs}(3). Hence, case 2 is not possible.

        Finally, consider case 3. For ease, let $G^*=G_{23}\cup G_{31}=(V^*,E^*)$, so $|E^*|=2|V^*|-4$. By Lemma \ref{SubraphInclusion},
		\begin{align}
			|E_{12}\cup E^*|+|E_{12}\cap E^*|=2|V_{12}\cup V^*|+2|V_{12}\cap V^*|-5.\label{MixedObstacleCount}
		\end{align}
		Note that $|V_{12}\cap V^*|>1$, as $v_1,v_2\in V_{12}\cap V^*$. By Lemma \ref{Sparse23Pair}, $G^*$ is $(2,3)$-sparse, so $|E_{12}\cap E^*|\leq2|V_{12}\cap V^*|-3$. By Lemma \ref{subgraphs}(1), we have that $|E_{12}\cup E^*|\leq2|V_{12}\cup V^*|-2$. Applying these bounds to Equation~\eqref{MixedObstacleCount} shows that $|E_{12}\cup E^*|=2|V_{12}\cup V^*|-2$ and $|E_{12}\cap E^*|=2|V_{12}\cap V^*|-3$.

        Note that $G_{12}\cap G^*$ is $(2,3)$-tight and thus connected by Lemma \ref{ConnectedTight}. This means that $G_{12}\cap G^*$ contains a path from $v_1$ to $v_2$. Note that $v_1\in V_{12}\cap V_{31}$ and $v_2\in V_{12}\cap V_{23}$. Since $V_{23}\cap V_{31}=\{v_3\}$, the path from $v_1$ to $v_2$ must pass through $v_3$. Hence, $v_3\in G_{12}\cap G^*$. However, this means that $G_{12}$ contains all neighbours of $v_0$. Since $G_{12}$ is $(2,1)$-tight, this contradicts Lemma \ref{subgraphs}(1). Hence, case 3 is not possible.

        Every case leads to a contradiction. Hence, one of the reductions will give a graph that satisfies the balanced condition.
    \end{proof}
    All cases have now been covered to complete the proof of Proposition \ref{Degree3Neighbour3}, showing that, in a $\Z^2\rtimes\mathcal{C}_s$-tight gain graph, every vertex of degree $3$ with $3$ neighbours admits a $1$-reduction to a $\Z^2\rtimes\mathcal{C}_s$-tight gain graph.
    \begin{proof}[Proof of Proposition \ref{Degree3Neighbour3}]
		Lemma \ref{Condition1Once} shows that no more than one of the possible $1$-reductions can give a graph that is not $(2,1)$-tight. In the case where all reductions give $(2,1)$-tight graphs, combining Lemmas \ref{Reflective6}, \ref{Reflective7} and \ref{Reflective5} shows that one of the reductions gives a $\Z^2\rtimes\mathcal{C}_s$-tight gain graph. In the case where exactly one of the reductions gives a graph that is not $(2,1)$-tight, combining Lemmas \ref{Fail1Pass3} and \ref{Reflective8} shows that one of the other reductions gives a $\Z^2\rtimes\mathcal{C}_s$-tight gain graph. All cases have been covered, completing the proof of Proposition \ref{Degree3Neighbour3}.
	\end{proof}
    This then completes the proof of Theorem \ref{Induction}, showing that every $\Z^2\rtimes\mathcal{C}_s$-tight gain graph can be obtained from a $\Z^2\rtimes\mathcal{C}_s$-tight gain graph on $K_1^1$ by a sequence of extensions.
    \begin{proof}[Proof of Theorem \ref{Induction}]
		It is clear that gained $0$-extensions, $1$-extensions and loop-$1$-extensions always preserve $\Z^2\rtimes\mathcal{C}_s$-tightness.
		
		Let $(G,m)$ be a $\Z^2\rtimes\mathcal{C}_s$-tight gain graph. Basic counting arguments on $(2,1)$-tight graphs show that $G$ has a vertex of degree $2$ or $3$. If $G$ has a vertex of degree $2$, then Proposition \ref{Degree2} shows that $(G,m)$ admits a $0$-reduction to a $\Z^2\rtimes\mathcal{C}_s$-tight gain graph. If $G$ has a vertex of degree $3$ with an incident loop, then Proposition \ref{Degree3Loop} shows that $(G,m)$ admits a loop-$1$-reduction to a $\Z^2\rtimes\mathcal{C}_s$-tight gain graph. If $G$ has a vertex of degree $3$ without an incident loop, then, depending on the number of distinct neighbours, Proposition \ref{Degree3Neighbour1}, \ref{Degree3Neighbour2} or \ref{Degree3Neighbour3} shows that $(G,m)$ admits a $1$-reduction to a $\Z^2\rtimes\mathcal{C}_s$-tight gain graph. Hence, every $\Z^2\rtimes\mathcal{C}_s$-tight gain graph admits a reduction to a smaller $\Z^2\rtimes\mathcal{C}_s$-tight gain graph.
		
		Since $K_1^1$ is the only $(2,1)$-tight graph on a single vertex, repeatedly applying these reductions will eventually lead to a $\Z^2\rtimes\mathcal{C}_s$-tight gain graph on $K_1^1$. Reverse this sequence of reductions to get the required sequence of extensions to complete the proof.
	\end{proof}
    Now the proof of Theorem \ref{Reflective} can be completed, showing that every $\Z^2\rtimes\mathcal{C}_s$-tight gain graph is minimally rigid.
	\begin{proof}[Proof of sufficiency for Theorem \ref{Reflective}]
		Every $\Z^2\rtimes\mathcal{C}_s$-tight gain graph on $K_1^1$ is minimally rigid. Theorem \ref{Induction} states that every $\Z^2\rtimes\mathcal{C}_s$-tight gain graph can be reached from a $\Z^2\rtimes\mathcal{C}_s$-tight gain graph on $K_1^1$ by a sequence of $0$-extensions, $1$-extensions and loop-$1$-extensions. Propositions \ref{ZeroExt}, \ref{OneExt} and \ref{LoopOneExt} show that each of these extensions preserves minimal rigidity, so any $\Z^2\rtimes\mathcal{C}_s$-tight gain graph is minimally rigid.
	\end{proof} 
The wallpaper groups $cm$ and $pg$ can be viewed as subgroups of the group $pm$. Hence, the proof of Theorem~\ref{Reflective} may be applied almost directly to characterise  minimal symmetric rigidity with respect to these groups.

\begin{theorem}
    Let $\Gamma\in\{pm,cm,pg\}$. A $\Gamma$-gain graph $(G,m)$ is minimally rigid if and only if it satisfies all of the following conditions:
    \begin{enumerate}
			\item \emph{$(2,1)$-tight Condition:} $G$ is $(2,1)$-tight;
			\item \emph{Purely Periodic Condition:} Every purely periodic subgraph of $G$ is $(2,2)$-sparse;
			\item \emph{Balanced Condition:} Every balanced subgraph of $G$ is $(2,3)$-sparse.
	\end{enumerate}
\end{theorem}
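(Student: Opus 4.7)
The plan is to reduce the cases $\Gamma = cm$ and $\Gamma = pg$ to the case $\Gamma = pm$, already settled in Theorem~\ref{Reflective}, by exploiting the fact that $cm$ and $pg$ both embed as index-$2$ subgroups of $pm$. First I would fix, for each $\Gamma \in \{cm, pg\}$, a concrete embedding $\iota : \Gamma \hookrightarrow pm$. Given a $\Gamma$-gain graph $(G, m)$, I would then consider the associated $pm$-gain graph $(G, \iota \circ m)$ obtained by post-composing the gain assignment with $\iota$; the underlying multigraph and the combinatorial orbit structure are unaffected.

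The key step is to show that $(G, m)$ is $\Gamma$-minimally rigid if and only if $(G, \iota \circ m)$ is $pm$-minimally rigid. This reduces to two observations. First, the entries of the orbit rigidity matrix depend only on the configuration $p$ and on how each gain acts on $\R^2$; since $\iota$ is an inclusion of subgroups of $\mathrm{Isom}(\R^2)$, this action is unchanged and the two orbit rigidity matrices coincide. Second, the space of trivial $\Gamma$-symmetric infinitesimal motions is one-dimensional in each of the three cases (translations along the mirror axis for $pm$ and $cm$, translations along the glide axis for $pg$), so the rank threshold $2|V|-1$ for minimal rigidity is the same in all cases.

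It then remains to verify that the three sparsity conditions translate verbatim. The $(2,1)$-tight condition depends only on $G$. The balanced condition requires every closed-walk gain to be trivial and is preserved by any injective homomorphism. For the purely periodic condition, the translations in a wallpaper group are precisely the orientation-preserving elements, and $\iota$ preserves the linear part of each isometry, so an element $\gamma \in \Gamma$ is a translation if and only if $\iota(\gamma) \in \Z^2 \subseteq pm$; hence the purely periodic subgraphs of $(G, m)$ and of $(G, \iota \circ m)$ coincide. Combining the above, Theorem~\ref{Reflective} applied to $(G, \iota \circ m)$ yields the stated characterisation for $\Gamma \in \{cm, pg\}$. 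The plan is thus a direct reduction rather than a new induction, and I expect the main (very mild) obstacle to be the bookkeeping of the translation subgroup under $\iota$, particularly for $pg$, whose translation sublattice sits as a proper index-$2$ sublattice of $\Z^2$ inside $pm$; once the embedding is explicit, this is handled automatically by using the same gain assignment on both sides.
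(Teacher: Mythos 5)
Your proposal is correct, and it takes a genuinely different route from the paper. You observe that the orbit rigidity matrix $\mathbf{O}(\tilde G,\tilde p,\Gamma)$, as defined in Section~\ref{sec:symfwk}, depends only on $G$, on the isometries $m(e)\in\mathrm{Isom}(\R^2)$, and on the configuration $p$; thus, viewing the same gain assignment through an inclusion $\iota\colon\Gamma\hookrightarrow pm$ yields the identical matrix. Combined with the fact that $pm$, $cm$ and $pg$ all have a one-dimensional space of trivial symmetric infinitesimal motions (translations along the $x$-axis, since the linear part of each reflection or glide is the same $s$), this gives that $(G,m)$ is minimally $\Gamma$-rigid if and only if $(G,\iota\circ m)$ is minimally $pm$-rigid. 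Since $\iota$ is an injective homomorphism preserving the linear parts, it carries balanced subgraphs to balanced subgraphs and purely periodic subgraphs to purely periodic subgraphs (the translations of $\Gamma$ are exactly $\iota^{-1}(\Z^2)$, as all three groups contain no non-trivial rotations), so the three sparsity conditions also translate verbatim. Applying Theorem~\ref{Reflective} to $(G,\iota\circ m)$ then closes the argument.

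The paper instead re-runs the whole inductive machinery of Sections~\ref{ReflectiveSufficiency}--\ref{ReflectiveReductions} inside $cm$ and inside $pg$. For $pg$ it observes that the delicate triple-parallel-edge case in Proposition~\ref{OneExt} cannot arise because two of the new edges must carry different $\mathcal C_s$-components; for $cm$ it notes that the argument goes through unchanged. Your reduction is cleaner and sidesteps all of this case analysis: the awkward triple-edge configuration is handled once, in the $pm$ proof, and never needs to be revisited. What the paper's approach buys in exchange is a self-contained Henneberg-type inductive construction of $cm$-tight and $pg$-tight gain graphs \emph{within} each respective group; your reduction establishes the characterisation but does not by itself give a recursive construction whose intermediate gain graphs stay inside $\iota(\Gamma)$. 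Both are valid; the trade-off is elegance versus a constructive by-product.

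Two small points worth stating explicitly if you write this up: $\iota$ being injective also ensures that the well-definedness constraints on gain assignments (distinct gains on parallel edges, non-identity gains on loops) are preserved; and the fact that the set of generic configurations is the same for $(G,m)$ and $(G,\iota\circ m)$, since genericity is defined via maximal rank of the very same orbit rigidity matrix.
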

\begin{proof}
    For $pm$, this result is Theorem \ref{Reflective}.

    The group $pg$ can be thought of as the subgroup of $pm$ generated by $(0,1,0)$ and $(1,0,s)$. It also admits exactly a $1$-dimensional space of trivial $pg$-symmetric infinitesimal motions. Hence, the proof of Lemma \ref{NecessityReflective} also proves necessity for the group $pg$. For sufficiency, we can follow mostly the same method used for Theorem \ref{Reflective} in this paper. The only significant difference is in proving that $1$-extensions preserve minimal rigidity. In Lemma \ref{OneExt}, a special case was described where a $1$-extension is performed on a loop to add a triple of parallel edges in such a way that the derived neighbours of the new vertex are collinear for all configurations, which occurs if and only if the new edges all have the same horizontal-translation-gain component. This does not occur for $pg$, as it is required that two of the new edges have different $\mathcal{C}_s$-gain components. This prevents them from having the same horizontal-translation-gain component. Hence, the usual method can be used to show that all $1$-extensions preserve minimal rigidity of $pg$-gain graphs.

    The group $cm$ can be thought of as the subgroup of $pm$ generated by $(1,1,0)$, $(1,-1,0)$ and $(0,0,s)$ . Again, the proof of necessity follows by exactly the same argument used for Lemma \ref{NecessityReflective}. Likewise, sufficiency can be proved by exactly the same method used  for Theorem \ref{Reflective}.
\end{proof}

\section{Further work}\label{SectionFurther}

\subsection{Other wallpaper groups.}
There are a number of other wallpaper groups for which there is still no known characterisation of conditions for symmetric rigidity. In the Hermann-Maguin notation, these are $pmm$, $cmm$, $p4m$, $p4g$, $p6m$, $pmg$, $pgg$, $p3m1$ and $p31m$ \cite{Schatt}. As the groups $pmm$, $cmm$, $p4m$, $p4g$ and $p6m$ all contain even dihedral subgroups, the Bottema's Mechanism gain graph \cite{Orbit} shows that the sparsity conditions analogous to Theorem \ref{Reflective} are not sufficient for these groups. It is not known whether the sparsity conditions are sufficient for $pmg$, $pgg$, $p3m1$ or $p31m$.

Although D. Bernstein used tropical geometry to characterise conditions for forced symmetric rigidity with respect to $p2$, $p3$, $p4$ and $p6$ \cite{BernsteinRotations}, this approach does not provide an inductive construction of the minimally rigid gain graphs. Finding such an inductive construction is likely to be challenging, as minimally rigid gain graphs with these groups would need to be $(2,0)$-tight and inductive constructions of $(2,0)$-tight graphs are generally quite difficult, since they may not have any vertices of degree less than $4$. However, an inductive construction of $(2,0)$-tight graphs was used to characterise conditions for ``odd" dihedral-symmetric rigidity in \cite{EGRES}. See also  \cite{kns}, which dealt with $(2,0)$-tight graphs in the context of symmetric rigidity in non-Euclidean normed planes. It may be possible to modify these approaches so that they can be applied to wallpaper groups.

\subsection{Flexible lattice representations.} An immediate question is how to extend the results of this paper to the fully flexible lattice representation, or at least to partially flexible lattice representations. Conditions for minimal rigidity of a gain graph for groups $p1$, $p2$, $p3$ $p4$ and $p6$ on a flexible lattice in the Euclidean plane were characterised in \cite[Theorem A]{MalesteinFlexible} and \cite[Theorem 1]{Orientation}, although these methods do not involve an inductive construction. To do this for any wallpaper group using an inductive method is again likely to be challenging, as minimal rigidity on the flexible or partially flexible lattice requires an overall edge count of $(2,0)$ or even higher.

For a partially flexible lattice with the group $p1$, an inductive construction for minimally rigid gain graphs was found in \cite[Theorem 2]{InductiveFlexible}. The counts would make it difficult to apply this to any other wallpaper group.

\subsection{Higher dimensions.}
 A combinatorial characterisation of generic rigid (finite) frameworks is not known for dimension 3 and higher. Thus, there are also no characterisations of forced symmetric rigidity for bar-joint frameworks in dimension $d\geq 3$, for any symmetry group. However, for the special class of body-bar frameworks in $d$-space, a complete characterisation of symmetry forced generic rigidity was obtained by S. Tanigawa for all crystallographic groups (for the fixed lattice representation). See Theorem 7.2 in \cite{TanigawaBodyBar}. Extensions of this result to flexible lattice representations, or to periodic body-hinge or molecular frameworks are not known.

\subsection{Non-Euclidean spaces.} There has recently been a surge of interest in the rigidity of frameworks in non-Euclidean normed spaces, see for example \cite{kp,k,ks15,ks16,ks18,kns,ck20}. Combinatorial characterisations for forced symmetric rigidity in the plane have so far only been obtained for finite frameworks with reflection symmetry  \cite{ks18} and  half-turn symmetry \cite{kns} for the $\ell^1$ and $\ell^\infty$  norms. 

Analogous results for finite frameworks with other symmetries or other normed planes, or for infinite periodic or crystallographic frameworks with forced symmetry (for  fixed or flexible lattice representations) have not been established yet.  We will address some of these problems in the companion paper \cite{EKS}.

	\bibliographystyle{plain}
    \bibliography{CrystalBib}

\end{document}